\documentclass[12pt,a4paper]{article}
%\documentclass[11pt,a4paper]{article}

%\nota{Colores: red, green, blue, cyan, magenta, yellow, black, gray, darkgray, lightgray, brown, lime, olive, orange, pink, purple, teal, violet and white}

\usepackage[latin1]{inputenc}
\usepackage{hyperref}
\usepackage{amsmath,amssymb,amscd,amsthm,amsfonts}
\usepackage{calrsfs}
\usepackage[all,cmtip]{xy}
\usepackage{rotating}
\usepackage{makeidx}
\usepackage{setspace,graphicx}
\usepackage{color}
%\usepackage{natbib,color} 
%\float­place­ment{fig­ure}{H}
\usepackage{float}
\usepackage{amsxtra}
\usepackage{latexsym}
\usepackage{graphics}
\usepackage{mathrsfs}
%\usepackage{anysize}
%\usepackage{fancyhdr}
%
%\usepackage{titlesec}

%\bibliographystyle{plain}
%\bibliography{algebra}
% \pagestyle{plane}
%\linespread{1.25}%% (1.3)
%\marginsize{5cm}{2cm}{2.5cm}{2.5cm}

%Para imprimir en hoja sencilla
%\textwidth 16cm 
%\textheight 21.4cm 
%\evensidemargin 0cm 
%\oddsidemargin 0cm 
%\topmargin 0.4cm
%\parskip 2mm
%\parindent 0cm

%Para imprimir en hoja doble
\textwidth 17cm 
\textheight 23cm 
\evensidemargin -0.2cm 
\oddsidemargin -0.2cm 
\topmargin -0.5cm
\parskip 2mm
\parindent 0cm

\theoremstyle{definition}
\newtheorem{thm}{Theorem}[section]
\newtheorem{defi}[thm]{Definition}

\newtheorem{prop}[thm]{Proposition}
\newtheorem{lemma}[thm]{Lemma}
\newtheorem{cor}[thm]{Corollary}

\newtheorem{ex}{Example}

\hyphenation{co-di-men-sion-one}
\hyphenation{di-men-sion-al}
\hyphenation{trans-ver-sal}
\DeclareMathAlphabet{\pazocal}{OMS}{zplm}{m}{n}

\newcommand{\RP}{\R\mathbb{P}}
\newcommand{\Q}{\mathbb Q}
\newcommand{\R}{\mathbb R}
\newcommand{\Z}{\mathbb Z}
\newcommand{\area}{area\,}

\newcommand{\diff}{differentiable\;}
\newcommand{\graf}{graph\;}
\newcommand{\fn}{function\,}

\newcommand{\param}{parametrisation\,}
\newcommand{\rank}{\textrm{rank}\,}

\newcommand{\ssi}{if\;and\;only\;if\;}
\newcommand{\asdir}{asymptotic\;direction\;}

\newcommand{\spp}{special\;parabolic\;point\;}
\newcommand{\spps}{special\;parabolic\;points\;}

\newcommand{\Supp}{\textrm{Supp}\,}
\newcommand{\Tub}{\mathrm{Tub}}

\DeclareMathOperator{\Gdef}{\Gamma_f}

% % Cambiar a british: analyse, parametrise(d)=parameterize(d), parametrisation, parametrising, 

\author{ Fuensanta Aroca, Angelito Camacho-Calder\'on,
	Mirna G\'omez-Morales.\\Unidad Cuernavaca del Instituto de Matem\'aticas,\\ Universidad Nacional Aut\'onoma de M\'exico.\\UMI - ``Laboratorio Solomon Lefschetz" CNRS}

\title{Transversal special parabolic points in the graph of a polynomial obtained under Viro's patchworking \thanks{Research partially supported by PAPIIT-UNAM IN108216, ECOS M14M03, LAISLA, CONACYT (Mexico) grants
		225387-292689 and 224855-291053.} }

\begin{document}

\maketitle

\begin{abstract}
	In this article we focus on the study of special parabolic points in surfaces arising as graphs of polynomials, we give a theorem of Viro's patchworking type to build families of real polynomials in two variables with a prescribed number of special parabolic points in their graphs. We use this result to build a family of degree $d$ real polynomials in two variables with $(d-4)(2d-9)$ special parabolic points in its graph. This brings the number of special parabolic points closer to the upper bound of $(d-2)(5d-12)$ when $d\geq 13$, which is the best known up until now.%use this result to build a family of degree $13$ real polynomials in two variables with $153$ special parabolic points in its graph, being closer to realise the upper bound of $(d-2)(5d-12)$ special parabolic ponits.
\end{abstract}

\section{Introduction}
Points in a surface immersed in a $3$-dimensional affine space are classified in terms of the contact order of their tangent lines to the surface. On generic surfaces, parabolic points appear along a curve which separates the hiperbolic domain from the elliptic domain and, among parabolic points, there are points where the highest contact order is reached in the direction of its only asymptotic line. These points are called \textit{special parabolic points} or \textit{Gaussian cusps}. 

Finding the number of special parabolic points in the graph of a generic polynomial of degree $d$, has been of special interest for the last century. For example, in \cite{Kulikov1983}, an upper bound of $2d(d-2)(11d-24)$ special parabolic points in generic algebraic surfaces of degree $d$ in $\RP^3$ is given. In \cite{OrtizGeomdesSurfaces}, A. Ortiz-Rodr\'iguez builds a family of polynomials whose graphs describe generic surfaces with $d(d-2)$ special parabolic points. And in \cite{OrtizHdezAffinegeom}, together with Hern\'andez-Mart\'inez and S\'anchez-Bringas, she proves that there are at most $(d-2)(5d-12)$ special parabolic points in the graph of a polynomial of degree $d$.
 
%In 2003, A. Ortiz-Rodr\'iguez \cite{OrtizGeomdesSurfaces} gives a family of polynomials whose graphs describe generic surfaces with $d(d-2)$ special parabolic points and conjectures the following:

%\begin{conj} %\cite[Conjetura 2]{OrtizGeomdesSurfaces} 
%	\rm{If the \graf of a real polynomial of degree $d$ in two variables is a generic surface, then it has at most $d(d-2)$ special parabolic points.}
%\end{conj}
%\vspace{-0.2cm}

Viro's patchworking was introduced in the late seventies \cite{Viro1984} as a technique to glue simple algebraic curves in order to construct real algebraic non-singular curves with prescribed topology. Details on this technique can be found for example in \cite{ViroPatchworking}. Among its many applications, it has been used by E. Brugall\'e and B. Bertrand to construct examples of real algebraic hypersurfaces in the projective plane with $(d-4)^2$ compact connected components in their parabolic curves \cite{BertrandErwan}, and by E. Brugall\'e and L. L\'opez de Medrano to construct examples of real algebraic curves in the projective plane with the maximum number of real inflection points \cite{brugalle2011inflection}. 

In this article, we glue simple graphs in order to build a new graph with a prescribed number of special parabolic points in it. Our main result is a theorem of Viro's patchworking type: 

\noindent
\textbf{Theorem \ref{Viro'sThmfortspps}}(Viro's Theorem for transversal special parabolic points)
	Let $\Delta \subset \R^2$ be a polyhedron with vertices in $\Z^2$ and let $\tau$ be the convex polyhedral subdivision of $\Delta$ induced by $\lambda :\Delta \to \R_{\geq 0}$. Let $f \in\R[x,y]$ be a polynomial with non-singular Hessian curve and support in $\Delta$. If $f_t$ is the patchworking polynomial of $f$ induced by $\lambda$, then there exists $\delta>0$ such that for $0<|t|<\delta$, there is an inclusion 
	\begin{equation*}
		\varphi_t:TSPP(f,\tau)^{*}\hookrightarrow TSPP(f_t)^{*}.
	\end{equation*}Here 
	\begin{equation*}
 		TSPP(f,\tau)^{*}:=\bigcup_{E\in \tau}\{(E,p); p\in TSSP(f|_E)^{*} \},
	\end{equation*} where for $E\in \tau$, $TSPP(f|_{E})^{*}$ denotes the set of transversal special parabolic points in the graph of the restriction map $f|_E$ that lie in $\pi^{-1}({\R^{*}}^2)$.

We use this theorem to disprove a conjecture that first appeared in 2002 in Ortiz's Phd dissertation \cite{TesisAdriana}, which was written under Arnold's supervision. In \cite{CounterexAdriana}, Hern\'andez-Mart\'inez, A. Ortiz. and F. S\'anchez-Bringas, give a degree 4 polynomial with 2 \spps above the bound of $d(d-2)$ given by A. Ortiz. Since $d(d-2)+2\leq (d-4)(2d-9)$ if $d\geq 13$, our theorem bring us closer to the bound $(d-2)(5d-12)$ special parabolic points given in \cite{OrtizHdezAffinegeom} in this case.

In Section \ref{sec:Classofpoints}, we give preliminaries on the classification of points in a surface and recall known results to characterise special parabolic points in the \graf of a \fn $f$ as the zero set of three polynomials $H_f$, $E_{1,f}$ and $E_{2,f}$ defined in terms of $f$. In Section \ref{sec:Viro'spatchworking}, we recall known results on convex triangulations and describe Viro's patchworking technique.
In Sections \ref{sec:Deftheory}, and \ref{sec:SPPsunderperturbation}, we describe how the variety defined by $H_f$, $E_{1,f}$ and $E_{2,f}$ behaves under the one-parameter perturbation $f+t\,g_t$ given in terms of polynomials $g_t\in\R[t][x,y]$; and in Sections \ref{sec:Sppsofhomotheticpols0} and \ref{sec:Sppsofhomotheticpols}, we analyse how the number of transversal special parabolic points is preserved under quasihomotheties of the form $(x,y)\mapsto (t^{\alpha}x,t^{\beta}y)$ for $t\neq 0$.

In Section \ref{sec:Viro'sthmforspps}, we state our main result to describe the behaviour of transversal special parabolic points under Viro's patchworking. Lastly, in Section \ref{sec:Application}, we use Corollary \ref{Cor:lemaViro} to build a one-parameter family $f_t\in\R[t][x,y]$ of polynomials of degree $d$ with at least $(d-4)(2d-9)$
%$13$ in two variables with at least $153$ 
transversal special parabolic points in their graphs for sufficiently small values of $t$. %, being closer to realise the upper bound of $(d-2)(5d-12)$ special parabolic points given in \cite{OrtizHdezAffinegeom} when $d\geq 12$.

The authors would like to thank Erwan Brugall{\'e} and Adriana Ortiz-Rodr\'iguez for their seminars and for valuable discussions on the subject of real surfaces. In particular, to A. Ortiz-Rodr\'iguez for the proof of Proposition \ref{PPE=H,E1,E2=0}. The second author would like to thank Luc\'ia L\'opez de Medrano, for answering several questions on the subject. 

\section{Classification of points in a surface}\label{sec:Classofpoints}

\begin{defi}
	Let $S\subset \R^3$ be a surface defined by the vanishing set of a differentiable function $F: \R^3 \rightarrow \R$, that is, $S=\{(x,y,z)\in\R^3; F(x,y,z)=0\}$. Take $p\in S$ and let $l: \R \to \R^3$ be the linear \param of a line with $p=l(t_0)$. The line $l$ has \textit{contact order} $k\in \mathbb{N}$ with $S$ at $p$ \ssi the partial derivatives satisfy
		\begin{equation*}
			(F\circ l)^{(m)} (t_0)=0, \; \; \mbox{for} \; \; m=0,\ldots,k-1;		\quad \mbox{and} \quad
			(F\circ l)^{(k)}(t_0)\neq 0.
		\end{equation*}	
\end{defi}
Tangent lines to a point in a regular surface have contact order $k\geq 2$. Salmon G. \cite{Salmon1865} used this property to classify the points in a surface according to the following criteria. 
\begin{defi}
Let $p$ be a point in the regular surface $S\subset \R^3$. A line with contact order $k\geq 3$ at $p\in S$ is called an \textit{asymptotic direction}. A point $p\in S$ is called \vspace{-0.3cm}
	\begin{enumerate}\setlength\itemsep{-0.3em}
		\item[1)] \textit{elliptic} if all tangent lines to $S$ at $p$ have contact order equal to two;
		\item[2)] \textit{hyperbolic} if it has exactly two asymptotic directions; or % and \textit{hyperbolic of inflection} if one of its two asymptotic directions has contact order $k\geq 4$;
		\item[3)] \textit{parabolic} if it has either one or more than two asymptotic directions. A parabolic point $p$ is also called \vspace{-0.4cm}
		\begin{itemize}\setlength\itemsep{-0.3em}
		\item[a)] \textit{generic} if it has only one \asdir $l$ and the contact order of $l$ at $p$ is 3;
		\item[b)] \textit{special} if it has only one \asdir $l$ with contact order $k\geq 4$; or %, and the components of $f$ with degree no greater than four and no smaller than two, is not the square of any polynomial.				
		\item[c)] \textit{degenerate} if it has more than two asymptotic directions.
		\end{itemize}
	\end{enumerate}\vspace{-0.4cm}
	The set of parabolic points in a non-degenerate surface $S\subset \R^3$ forms a curve called the \textit{parabolic curve} of $S$.
\end{defi}

Let $S\subset \R^3$ be locally expressed as the \graf 
\begin{equation*}
	\Gdef=\{(x,y,z)\in \R^{3}| f(x,y)=z\}
\end{equation*}
of a \diff \fn $f:\R^2\rightarrow \R$. We will consider from now on the standard projection $\pi:\R^3\rightarrow \R^2$ on the $xy$-plane and we will denote by $SPP(f)$ the set of special parabolic points in $\Gdef$, and by $SPP(f)^{*}$ the points in $SPP(f)$ that lie in $\pi^{-1}({\R^{*}}^2)$.

Hereafter, we will denote the vanishing set of a function $f$ as $V(f)$.
\begin{defi}
	Let $f: \R^2 \rightarrow \R$ be a \diff \fn. We will refer to the curve $V(H_f)$, defined by the \textit{Hessian} $H_f(x,y):=f_{xx}f_{yy}-f_{xy}^2$ of $f$, as the \textit{Hessian curve} of $f$.
\end{defi}\vspace{-0.5cm}
Note that the Hessian $H_f$ of $f$ is the determinant of its Hessian matrix $Hess(f)=\begin{pmatrix}f_{xx} & f_{xy} \\f_{yx} & f_{yy}\end{pmatrix}$. When the graph of a function is a non-degenerate surface, the Hessian of the function, along with the following three functions, plays an important role in finding special parabolic points.

\begin{defi}\label{Ei's} 
	Let $f: \R^2 \rightarrow \R$ be a \diff \fn. We consider the functions $C_{f}, E_{1,f}$ and $E_{2,f}$ given %, defined in the projection $\pi:\R^3\rightarrow \R^2$ on the $xy$-plane of $\Gdef$, 
	by \vspace{-0.4cm}	
		\begin{itemize}\setlength\itemsep{-0.5em}
			\item [i)] 	 $C_{f}(x,y):=\begin{pmatrix}
				-(H_{f})_y & (H_{f})_x
			\end{pmatrix}\begin{pmatrix}f_{xx} & f_{xy} \\f_{yx} & f_{yy}\end{pmatrix}\begin{pmatrix}
			-(H_{f})_y\\ \; \, (H_{f})_x
		\end{pmatrix}$; and %=((H_{f})_y)^2f_{xx}-2(H_{f})_x(H_{f})_y f_{xy}+((H_{f})_y)^2f_{yy};
			\item [ii)] $\begin{pmatrix}
			E_{1,f}\\ E_{2,f}
			\end{pmatrix}=\begin{pmatrix}f_{xx} & f_{xy} \\f_{yx} & f_{yy}\end{pmatrix}\begin{pmatrix}
			-(H_{f})_y\\ \; \; (H_{f})_x
			\end{pmatrix}$,%$E_{1,f}:= (-H_{f})_{y}f_{xx}+(H_{f})_xf_{xy}$; and 
			%\item [iii)] $E_{2,f}:= (-H_{f})_{y}f_{xy}+(H_{f})_xf_{yy}$,
		\end{itemize}\vspace{-0.4cm} 
		where $(H_{f})_x$ and $(H_{f})_y$ are the partial derivatives of the Hessian of $f$ with respect to $x$ and $y$, respectively.
\end{defi}

Note that
\begin{equation}\label{Rel.CfyQf}
C_f(x,y)=Q_f(-(H_{f})_y, (H_{f})_x),
\end{equation} 
where $Q_f$ is the quadratic form \vspace{-0.2cm} 
\begin{equation*}
	Q_f(x,y)=\begin{pmatrix}
dx&dy
\end{pmatrix}\begin{pmatrix}f_{xx} & f_{xy} \\f_{yx} & f_{yy}\end{pmatrix}\begin{pmatrix}
dx\\ dy
\end{pmatrix},
\end{equation*} while the polynomials $E_{1,f}$ and $E_{2,f}$ were introduced by V. I. Arnold in \cite{ArnoldRemarksParabCurves}.

\begin{prop}\label{PPE=CfyHf}
	Let $S\subset \R^3$ be locally expressed as the \graf $\Gamma_f$
	of a \diff \fn   
	
	$f:\R^2\rightarrow \R$ and let $H_f$ be the Hessian of $f$.
	\begin{enumerate}\setlength\itemsep{0em}
	\item[i)] The projection on the $xy$-plane of the parabolic curve of $\Gdef$ is the Hessian curve of $f$. A tangent vector to the parabolic curve of $\Gdef$ at a point $p$ projects to a vector which is a multiple of the vector $(-(H_{f})_y(\pi(p)), (H_{f})_x(\pi(p))) \in \R^2$, tangent to the Hessian curve of $f$ at $q$.
	\item[ii)] Let $l : \R \rightarrow \R^3$, $t\mapsto p+tu$ with $u\in \R^3$, parametrise a line with contact order $k \geq 2$ at $p \in \Gamma_f$. Then $l$ is an asymptotic direction of $\Gamma_f$ if and only if the projection $\pi(u)$ is a zero of the quadratic form $Q_f$.
	\item[iii)] If the Hessian curve of $f$ is non-singular, then the set of special parabolic points in $\Gdef$ is defined by the intersection of the tangent curves $V(H_f)$ and $V(C_{f})$. 
	\end{enumerate}
\end{prop}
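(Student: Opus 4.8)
The plan is to prove the three items in order, with everything flowing from the Taylor expansion of $F\circ l$ at $t=0$, where $F(x,y,z):=f(x,y)-z$ defines $\Gamma_f$ and $l(t)=p+tu$. \textit{Item (ii):} since $p\in\Gamma_f$ the constant term of $(F\circ l)(t)=f(\pi(p)+t\pi(u))-(p_3+tu_3)$ vanishes; since $l$ has contact order $\geq 2$ we get $(F\circ l)'(0)=f_x u_1+f_y u_2-u_3=0$, so $u$ is tangent to $\Gamma_f$ and $u_3$ is determined by $\pi(u)=(u_1,u_2)$; and $(F\circ l)''(0)=f_{xx}u_1^2+2f_{xy}u_1u_2+f_{yy}u_2^2=Q_f(\pi(u))$. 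Hence $l$ has contact order $\geq 3$ — i.e. is an asymptotic direction — if and only if $\pi(u)$ is a zero of $Q_f$.

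\textit{Item (i):} since $\det\mathrm{Hess}(f)=H_f$, the binary form $Q_f$ is definite when $H_f>0$, indefinite when $H_f<0$, and degenerate when $H_f=0$; combined with item (ii) and the definitions of elliptic/hyperbolic/parabolic, this says $\pi$ carries the parabolic curve onto $V(H_f)$. For the tangent statement, the parabolic curve is the image of $V(H_f)$ under $q\mapsto(q,f(q))$, so a tangent vector to it at $p$ projects to a tangent vector of $V(H_f)$ at $\pi(p)$, which is orthogonal to $\nabla H_f(\pi(p))$ and hence a multiple of $\bigl(-(H_f)_y(\pi(p)),(H_f)_x(\pi(p))\bigr)$.

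\textit{Item (iii):} assume $V(H_f)$ non-singular. I would first note that $\mathrm{Hess}(f)$ has rank exactly $1$ on $V(H_f)$: if $f_{xx},f_{xy},f_{yy}$ all vanished at some $q\in V(H_f)$, then every entry of $\nabla H_f$, being a sum of terms each divisible by one of them, would vanish at $q$ — contradicting non-singularity. Thus each $q\in V(H_f)$ lifts to a genuine parabolic point whose unique asymptotic direction projects onto the line $L(q):=\ker\mathrm{Hess}(f)(q)$ (for a rank-one symmetric form the isotropic vectors are exactly the kernel), while the tangent to $V(H_f)$ at $q$ is $L'(q):=\bigl\langle(-(H_f)_y(q),(H_f)_x(q))\bigr\rangle$, a genuine line since $\nabla H_f(q)\neq 0$. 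By (\ref{Rel.CfyQf}), $C_f(q)=Q_f\bigl(-(H_f)_y(q),(H_f)_x(q)\bigr)$, so $C_f(q)=0$ iff a generator of $L'(q)$ is $Q_f$-isotropic, i.e. iff $L'(q)=L(q)$. On the other hand, by item (ii), the point over $q$ is special iff its asymptotic line has contact order $\geq 4$, i.e. iff $(F\circ l)'''(0)=0$, which is the third-order Taylor form of $f$ at $q$ evaluated on a generator of $L(q)$. Hence everything reduces to one claim: at $q\in V(H_f)$, this cubic form vanishes on $L(q)$ if and only if $L(q)=L'(q)$.

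To prove the claim I would pass to a normal form. The loci $V(H_f)$, $V(C_f)$ and the set of special parabolic points all transform equivariantly under translations of $\R^2$ and rotations $R$ — this follows from $\mathrm{Hess}(f\circ R)(x)=R^{\top}\mathrm{Hess}(f)(Rx)R$ together with the fact that $\left(\begin{smallmatrix}0&-1\\1&0\end{smallmatrix}\right)$ commutes with $R$ — so we may assume $q=0$ and $L(0)$ is the $x$-axis. Then $f_{xx}(0)=0$ (since $Q_f$ is null on the $x$-axis), whence $f_{xy}(0)=0$ (from $H_f(0)=-f_{xy}(0)^2=0$), and $f_{yy}(0)\neq 0$. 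A direct computation then gives $(H_f)_x(0)=f_{xxx}(0)\,f_{yy}(0)$ and $(H_f)_y(0)=f_{xxy}(0)\,f_{yy}(0)$, so $L'(0)=\langle(-f_{xxy}(0),f_{xxx}(0))\rangle$ equals $L(0)=\langle(1,0)\rangle$ exactly when $f_{xxx}(0)=0$ — which is precisely the value of the cubic form of $f$ at the generator $(1,0)$ of $L(0)$. This settles the claim, so $q\in V(H_f)$ lifts to a special parabolic point iff $C_f(q)=0$; as $\pi|_{\Gamma_f}$ is a bijection onto $\R^2$, $SPP(f)$ is exactly the set of lifts of $V(H_f)\cap V(C_f)$. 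The main obstacle is this last normal-form computation, plus checking that translations and rotations leave the relevant data intact; everything else is the contact-order expansion of item (ii) and elementary linear algebra of rank-one quadratic forms.
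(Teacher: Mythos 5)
Your proposal is correct, and it is worth noting that it supplies a proof where the paper gives none: the authors declare i) and ii) straightforward and cite Hern\'andez-Mart\'inez--Ortiz-Rodr\'iguez--S\'anchez-Bringas for iii), so your argument is a genuinely self-contained alternative rather than a reproduction. Your treatment of i) and ii) via the Taylor expansion of $F\circ l$ is the standard one and is complete. For iii) the chain of reductions is sound: non-singularity of $V(H_f)$ forces $\mathrm{Hess}(f)$ to have rank exactly one along $V(H_f)$ (your observation that every term of $\nabla H_f$ carries one of $f_{xx},f_{xy},f_{yy}$ as a factor is exactly the right way to see this), so each $q\in V(H_f)$ carries a unique asymptotic line $L(q)=\ker \mathrm{Hess}(f)(q)$; the identity \eqref{Rel.CfyQf} converts $C_f(q)=0$ into $L'(q)=L(q)$; and the rotation-equivariance together with the normal form $f_{xx}(0)=f_{xy}(0)=0$, $f_{yy}(0)\neq 0$ reduces both ``the cubic form vanishes on $L(q)$'' and ``$L'(q)=L(q)$'' to the single condition $f_{xxx}(0)=0$ --- I checked the computation $(H_f)_x(0)=f_{xxx}(0)f_{yy}(0)$, $(H_f)_y(0)=f_{xxy}(0)f_{yy}(0)$ and it is correct, as is the remark that $f_{xxx}(0)=0$ automatically forces $f_{xxy}(0)\neq 0$ by non-singularity. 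The one clause of the statement you do not address is the adjective ``tangent'': the proposition (and the opening of Section \ref{sec:SPPsunderperturbation}, which invokes it) asserts that $V(H_f)$ and $V(C_f)$ are tangent at the points of $\pi(SPP(f))$, whereas your proof establishes only the set-theoretic identity. This is a minor omission and follows from your own normal form with one more line: at such a point $\nabla C_f(0)=\left(0,\,f_{xxy}(0)^{3}f_{yy}(0)^{2}\right)$ is a nonzero multiple of $\nabla H_f(0)=\left(0,\,f_{xxy}(0)f_{yy}(0)\right)$, so the two curves are smooth there with the same tangent line.
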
 

The proof of i) and ii) are straight forward and part iii) is given in \cite{OrtizHdezAffinegeom}.

Our next result allows us to find special parabolic points in the \graf\,of a \diff function in terms of its Hessian curve and the curves $V(E_{1,f})$ and $V(E_{2,f})$.

\begin{prop}\label{PPE=H,E1,E2=0}
	Let $p=(q,f(q))\in \R^3$ be in the \graf $\Gdef$ %$\Gdef=\{(x,y,z)\in \R^{3}|0= f(x,y)-z\}$ 
	of a \diff \fn $f:\R^2\rightarrow \R$. If the Hessian curve of $f$ is non-singular at $q$, then $p\in \Gdef$ is an special parabolic point \ssi $q$ lies in the intersection of the curves $V(H_f)$, $V(E_{1,f})$ and $V(E_{2,f})$.
\end{prop}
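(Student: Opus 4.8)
The plan is to reduce Proposition~\ref{PPE=H,E1,E2=0} to Proposition~\ref{PPE=CfyHf}(iii), which already characterises special parabolic points (under the non-singularity hypothesis on the Hessian curve) as $V(H_f)\cap V(C_f)$. So it suffices to show that, at a point $q$ where $H_f(q)=0$ and the Hessian curve is non-singular at $q$, one has $C_f(q)=0$ if and only if $E_{1,f}(q)=E_{2,f}(q)=0$. The ``if'' direction is immediate from the definitions: by Definition~\ref{Ei's}, $C_f = -(H_f)_y\,E_{1,f} + (H_f)_x\,E_{2,f}$ (expanding the matrix product in (i) using the vector $(E_{1,f},E_{2,f})$ from (ii)), so if both $E_{1,f}$ and $E_{2,f}$ vanish at $q$ then $C_f(q)=0$ regardless of any hypothesis.

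The substantive direction is the converse: assuming $H_f(q)=0$, $C_f(q)=0$, and that $V(H_f)$ is smooth at $q$, deduce $E_{1,f}(q)=E_{2,f}(q)=0$. First I would record that non-singularity of the Hessian curve at $q$ means $\bigl((H_f)_x(q),(H_f)_y(q)\bigr)\neq (0,0)$, so the vector $w:=(-(H_f)_y(q),(H_f)_x(q))$ is a nonzero tangent vector to $V(H_f)$ at $q$. Write $A:=Hess(f)(q)$. Then $(E_{1,f}(q),E_{2,f}(q))^{T}=A\,w$ and, by \eqref{Rel.CfyQf}, $C_f(q)=w^{T}A\,w$. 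The key algebraic fact is that on the parabolic locus $A$ is degenerate in a controlled way: since $H_f(q)=\det A=0$ and (for a non-degenerate surface) $A\neq 0$, the matrix $A$ has rank exactly $1$, hence is of the form $A=\pm v\,v^{T}$ for some nonzero $v\in\R^2$ (a real rank-one symmetric matrix is $\pm$ an outer square precisely when it is semidefinite — which it is here, being rank one with $\det = 0$). Then $C_f(q)=w^{T}A w = \pm (v\cdot w)^{2}$, so $C_f(q)=0$ forces $v\cdot w=0$, and therefore $A w = \pm v(v\cdot w)=0$, i.e.\ $(E_{1,f}(q),E_{2,f}(q))=(0,0)$, as desired.

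I would phrase this slightly more carefully to handle the genuinely degenerate case: if $A=0$ at $q$ then trivially $E_{1,f}(q)=E_{2,f}(q)=0$ and there is nothing to prove (and such $q$ is not a special parabolic point anyway, but the equivalence of vanishing still holds); if $\rank A=2$ then $H_f(q)\neq 0$, contrary to hypothesis; so the only case to treat is $\rank A=1$, handled above. The main obstacle — really the only subtlety — is justifying that a rank-one real symmetric $2\times 2$ matrix with zero determinant is necessarily semidefinite and hence $\pm$ an outer product $v v^{T}$; this is elementary (diagonalise: the eigenvalues are $0$ and $\operatorname{tr}A$, both of the same sign trivially since one is zero), but it is the step that actually uses that we are over $\R$ and in dimension two, and it is where the ``non-singular Hessian curve'' hypothesis enters to rule out $A$ having a nonzero kernel vector that is \emph{not} proportional to $w$. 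For completeness I would also note that non-singularity of $V(H_f)$ at $q$ guarantees $w\neq 0$, so that $E_{1,f}(q)=E_{2,f}(q)=0$ is equivalent to $Aw=0$ rather than being vacuous.
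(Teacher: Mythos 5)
Your proof is correct, and its overall skeleton --- reduce to Proposition~\ref{PPE=CfyHf}(iii) and then show that, at a smooth point $q$ of $V(H_f)$ with $H_f(q)=0$, the conditions $C_f(q)=0$ and $E_{1,f}(q)=E_{2,f}(q)=0$ are equivalent --- matches the paper's. Where you genuinely differ is in the substantive implication $C_f(q)=0\Rightarrow Aw=0$, where $A$ is the Hessian matrix of $f$ at $q$ and $w=(-(H_f)_y(q),(H_f)_x(q))$. The paper argues that $w$ is the unique zero (up to scaling) of the normal curvature form on $\R^2\setminus\{(0,0)\}\simeq S^1$, hence an extremum of that form, hence an eigenvector of $A$, and then reads off that the corresponding eigenvalue is $w^{\mathrm{t}}Aw/\|w\|^2=0$. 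You instead exploit the rank-one structure of $A$ on the parabolic locus: $\det A=0$ and $A\neq 0$ force $A=\pm vv^{\mathrm{t}}$, so $C_f(q)=w^{\mathrm{t}}Aw=\pm(v\cdot w)^2$, and its vanishing gives $v\cdot w=0$ and hence $Aw=0$ directly. Your version is purely linear-algebraic, avoids the paper's slightly loose phrase ``the only eigenvector of the Hessian matrix,'' and treats the degenerate case $A=0$ explicitly (which in fact cannot occur under the hypothesis, since $A=0$ at $q$ forces $(H_f)_x(q)=(H_f)_y(q)=0$, contradicting non-singularity of the Hessian curve at $q$). Both routes establish the same underlying fact --- that a null vector of a degenerate real quadratic form in two variables lies in the kernel of the associated symmetric matrix --- but yours is the cleaner and more self-contained write-up of it.
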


	\vspace{-0.6cm}
	
\begin{proof}
	We will prove the forward implication. From Proposition \ref{PPE=CfyHf}, $p=(q,f(q))\in \Gdef$ is a special parabolic point \ssi $q\in V(H_{f})\cap V(C_{f})$. The condition $Q_f(-(H_{f})_y(q), (H_{f})_x(q)) = C_{f}(q) = 0$ given by \eqref{Rel.CfyQf} implies, following Prop. \ref{PPE=CfyHf} \,\textrm{ii)}, that the line $l$ in the tangent plane to $\Gdef$ passing through $p\in \Gdef$ in the direction $u\in\R^3$, with $(0, 0)\neq \pi(u)=(-(H_{f})_y(q), (H_{f})_x(q))\in \R^2$, is an asymptotic direction of $\Gdef$ at $p$.% contact order $k\geq 3$ with $\Gdef$. 
	
	%The plane generated by the line $l$ and a normal vector to the tangent plane to $\Gdef$ at $p$ intersects $\Gdef$ into a curve, whose normal curvature is cero only in the direction of $\pi(u)=(-((H_{f})_y(q), ((H_{f})_x(q))$. 
	The vector $\pi(u)=(-(H_{f})_y(q), (H_{f})_x(q))$ is the only zero in $\R^2\setminus \{(0,0)\}$ of the normal curvature function $v\mapsto C_f(v)$. Since $\R^2\setminus \{(0,0)\}$ is homotopically equivalent to $S^1$, then %This holds for $\,l$ is the only asymptotic direction of the special parabolic point $p\in \Gdef$, thus 
	 $\pi(u)$ is either a maximum or a minimum of the normal curvature function and, thus, $\pi(u)$ is the only eigenvector of the Hessian matrix of $f$. Let $\lambda$ be the eigenvalue associated to $\pi(u)$, then we have 
	$\displaystyle
	0=\pi(u)\begin{pmatrix}f_{xx} & f_{xy} \\f_{yx} & f_{yy}\end{pmatrix} \pi(u)^{\mathrm{t}}=\lambda \parallel \pi(u)\parallel^2$.
	Since $\pi(u)\neq (0,0)$, this implies that \vspace{-0.2cm} \begin{equation*}
	\begin{pmatrix}
	E_{1,f}(q)\\ E_{2,f}(q)
	\end{pmatrix}=\begin{pmatrix}f_{xx} & f_{xy} \\f_{yx} & f_{yy}\end{pmatrix}\begin{pmatrix}
	-(H_{f})_y(q)\\ \; \; (H_{f})_x(q)
	\end{pmatrix}=\begin{pmatrix}
	0\\ 0
	\end{pmatrix}.
	\end{equation*} 

 The backward implication is straightforward from Definition \ref{Ei's}.
 	
	\vspace{-0.6cm}
		
\end{proof}

\begin{cor}{\rm
Let $f$ be a \diff \fn $f:\R^2\rightarrow \R$ and let $\Gdef$ be the graph of $f$. If the Hessian curve of $f$ is non-singular, then 
	\vspace{-0.4cm}
	
	\begin{equation*}
	SPP(f)=\{p\in \Gdef; \pi(p)\in V(H_{f})\cap V(E_{1,f})\cap V(E_{2,f})\}.
	\end{equation*} 
}
\end{cor}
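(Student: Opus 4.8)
The plan is to deduce the corollary directly from Proposition \ref{PPE=H,E1,E2=0} by an argument that is pointwise on the graph. First I would recall that the projection $\pi$ restricts to a bijection $\Gamma_f \to \R^2$ with inverse $q \mapsto (q,f(q))$, so that proving the claimed equality of subsets of $\Gamma_f$ reduces to showing, for every $q \in \R^2$, that $(q,f(q)) \in SPP(f)$ if and only if $q \in V(H_f) \cap V(E_{1,f}) \cap V(E_{2,f})$.

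Next I would split into two cases according to whether $q$ lies on the Hessian curve. If $q \notin V(H_f)$, then by Proposition \ref{PPE=CfyHf}\,i) the point $(q,f(q))$ does not lie on the parabolic curve of $\Gamma_f$, hence it is not a special parabolic point; on the other hand $q$ trivially fails to lie in $V(H_f)$, and a fortiori in the triple intersection. Thus both sides of the asserted equality exclude $(q,f(q))$, and nothing further need be checked in this case. This case also explains why the hypothesis of Proposition \ref{PPE=H,E1,E2=0}, which is phrased at a point of the Hessian curve, suffices: off the curve there is nothing to verify.

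If instead $q \in V(H_f)$, then the hypothesis that the Hessian curve of $f$ is non-singular guarantees in particular that it is non-singular at $q$, so Proposition \ref{PPE=H,E1,E2=0} applies and yields exactly the desired equivalence: $(q,f(q))$ is a special parabolic point if and only if $q \in V(H_f) \cap V(E_{1,f}) \cap V(E_{2,f})$. Combining the two cases over all $q \in \R^2$ gives the stated set equality.

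I do not expect a genuine obstacle here, since the substantive content is entirely carried by Proposition \ref{PPE=H,E1,E2=0}; the only point requiring a moment's attention is the bookkeeping for points off the Hessian curve, where one invokes Proposition \ref{PPE=CfyHf}\,i) to rule out that such a point could be special parabolic.
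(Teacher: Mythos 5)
Your proposal is correct and takes essentially the same route as the paper, whose proof is simply a one-line appeal to Proposition \ref{PPE=H,E1,E2=0}; you merely spell out the pointwise reduction and the (harmless) case of points off the Hessian curve via Proposition \ref{PPE=CfyHf}\,i). No gap.
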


\begin{proof}
It follows from Proposition \ref{PPE=H,E1,E2=0}.
\end{proof}

\section{Viro's patchworking}\label{sec:Viro'spatchworking}

In this section we will recall Viro's patchworking technique. This procedure was introduced in the late seventies as a technique to glue simple algebraic curves in order to construct real algebraic non-singular curves with prescribed topology. 
	
Let $ f\in \R[x,y]$ be a polynomial, the \textit{support} of $f$ is the finite set of pairs $(i,j)\in \Z^{2}$ whose entries are the exponent of a monomial in $f$. That is, given $\displaystyle f(x,y):=\sum a_{i,j}x^{i}y^{j}$, 
\begin{equation*}
	\Supp (f):=\left\{(i,j)\in \Z^2 ; a_{i,j}\neq 0 \right\}.
\end{equation*}
	
For any subset $A\subset \R^{2}$, we define the \textit{restriction of $f$ to $A$} by 
\begin{equation*}
	\displaystyle f|_A (x,y):=\sum_{(i,j)\in A\cap \Supp (f)} a_{i,j}x^{i}y^{j}.
\end{equation*}

%In order to describe Viro's patchworking technique, we need to introduce the concept of convex polyhedral subdivision of polyhedra.

Let $\Delta \subset \R^2$ be a polyhedron and let $\tau$ be a polyhedral subdivision of $\Delta$. We say that $\tau$ is \textit{convex} if there exists a convex piecewise linear function $\lambda: \Delta\to \R_{\geq 0}$, taking integer values on the vertices of the subdivision $\tau$, whose restriction to the polyhedra of $\tau$ is linear; and with the property that it is not linear in the union of any two distinct polyhedra of $\tau$. We will say in this case that $\lambda$ \emph{induces the convex polyhedral subdivision} $\tau$.

	Given a convex polyhedral subdivision $\tau$ induced by the function $\lambda$, the \graf $\Gamma_{\lambda}$ forms a polytope called the \textit{compact polytope with polyhedral subdivision induced by $\lambda$}. We will refer to the set of 2-dimensional faces that lie in $\Gamma_{\lambda}$ by $T(\lambda)$.

The projection $\pi:\R^3 \to \R^2$ on the $xy$-plane induces a bijection between the faces of $T(\lambda)$ and the polyhedra in $\tau$. The inverse of this bijection will be denoted by $\mu$, that is,
\begin{equation*}
	\mu : \tau \to T(\lambda), \, E \mapsto \pi^{-1}(E)\cap \Gamma_{\lambda}.
\end{equation*}	

Let $\Delta \subset \R^2$ be a polyhedron and let $\lambda :\Delta \to \R_{\geq 0}$ be a convex linear function inducing $\tau$, a convex polyhedral subdivision of $\Delta$. Let $ \displaystyle f(x,y)=\sum_{(i,j)\in \Delta} a_{i,j}x^{i}y^{j}\in \R[x,y]$ be a polynomial whose support is contained in $\Delta$. The polynomial $\displaystyle f_{t}(x,y):=\sum_{(i,j)\in \Delta} a_{i,j}t^{\lambda(i,j)}x^{i}y^{j}\in\R[t][x,y]$, will be called the \textit{patchworking polynomial of $f$ induced by $\lambda$}. Given $\widetilde{S}\subset \R^{3}$, the restriction of $f_{t}$ to $\widetilde{S}$ is given by
\begin{equation*}
	f_{t}|_{\widetilde{S}}:=\sum_{\left(i,j,\lambda(i,j)\right)\in \widetilde{S}}a_{ij}t^{\lambda(i,j)}x^{i}y^{j}.
\end{equation*}

Given $\widetilde{E}\in T(\lambda)$ and $E\in \tau$ such that $\pi(\widetilde{E})=E$, then $f_t |_{\widetilde{E}}$ is the patchworking of $f_{E}$ induced by $\lambda|_{E}$.
			
\begin{defi}
	Let $\Delta \subset \R^2$ be a polyhedron and let $\tau$ be a convex polyhedral subdivision of $\Delta$ induced by $\lambda :\Delta \to \R_{\geq 0}$. Let $ \displaystyle f(x,y)=\sum_{(i,j)\in \Delta} a_{i,j}x^{i}y^{j}\in \R[x,y]$ be a polynomial with support contained in $\Delta$ and let $f_t$ be the patchworking polynomial of $f$ induced by $\lambda$. %Let $ \displaystyle f(x,y)=\sum_{(i,j)\in \Delta}a_{i,j}x^{i}y^{j}\in \R[x,y]$ be a polynomial whose \textit{support}, denoted by $\Supp f:=\{(i,j)\in \Z^2;a_{i,j}\neq 0\}$, is contained in $\Delta$. %the set of vertices $\nu(\Delta)$ of $\Delta$. \vspace{-0.2cm}
		%\begin{itemize}\setlength\itemsep{-1em}
	Given $r\in \R_{\geq 0}$, we will denote by $f_t^{[r]}$ the restriction of $f_t$ to $\lambda^{-1}(r)$, i.e.,
	\begin{equation*}
		f^{[r]}_t(x,y):=t^r\!\!\!\!\!\! \!\! \!\! \!\!\!\!\sum_{\{(i,j)\in \Delta;\lambda(i,j)=r\}}\!\! \!\! \!\! \!\!\!\! \!\!a_{i,j}x^{i}y^{j}.
	\end{equation*}
			%\vspace{10pt}
			%\item [b)] Given a face $\widetilde{E}\in T(\lambda)$ and a level set $r=\lambda^{-1}(\gamma)$ of $\lambda$, we will denote by $f^{[r]}|_{\widetilde{E}}$ the restriction of $f^{[r]}$ to $\widetilde{E}$, i.e., 
			%\begin{equation*}f^{[r]}|_{\widetilde{E}} (x,y)=\sum_{\{(i,j,\lambda(i,j))\in \widetilde{E};\lambda(i,j)=\gamma\}} a_{i,j}x^{i}y^{j}.	\end{equation*}
		%\end{itemize}
\end{defi}			

Set $\displaystyle r:=\!\!\!\!\min_{(i,j)\in \Supp (f)}\!\!\!\!\!\!\!\lambda (i,j)$, then $\widetilde{E}:=\left\{ (a,b,c)\in \Gamma_{\lambda} ; c=r\right\}$ is a face of $T(\lambda)$ and 

\begin{equation}%\label{Pwp}
	f_{t}|_{\widetilde{E}}=f_{t}^{[r]}=t^{r}f|_{\pi(\widetilde{E})}=t^{r}f|_{E}, \mbox{ where } E=\pi(\widetilde{E}).
\end{equation}

		%Given a face $\widetilde{E}\in T(\lambda)$, set $E=\pi(\widetilde{E})$, following our previous notation, the polynomials 

%	$\displaystyle f_t|_{E} (x,y)= \sum_{(i,j)\in {E}} a_{i,j}t^{\lambda(i,j)}x^{i}y^{j}, \quad
%	f_t^{[r]}(x,y)=t^{\gamma}\sum_{\{(i,j)\in \Delta;\lambda(i,j)=\gamma\}} a_{i,j}x^{i}y^{j}\quad \mbox{and}$
	
%$\displaystyle \quad f_t^{[r]}|_{\widetilde{E}}(x,y)=t^{\gamma}\sum_{\{(i,j)\in E;\lambda(i,j)=\gamma\}} a_{i,j}t^{\lambda(i,j)}x^{i}y^{j}$
	
%	denote the restrictions of $f_t$ to the face ${\widetilde{E}}\in T(\lambda)$, to the level set $r=\lambda^{-1}(\gamma)$ and the restriction of $f_{t}^{[r]}$ to $\widetilde{E}$, respectively.}		

Let $\Delta \subset \R^2$ be a polyhedron with vertices in the integer lattice $\Z^2$ and let $f \in\R[x,y]$ be a polynomial with support in $\Delta$. Let $\tau$ be the convex polyhedral subdivision of $\Delta$ induced by $\lambda :\Delta \to \R_{\geq 0}$. Denote by $CC(f)^{*}$ the set of compact connected components of $V(f) \setminus \{xy=0\}$ and define
\begin{equation*}
	CC(f,\tau)^{*}:=\bigcup_{E\in \tau}\{(E,\mathcal{C}); \mathcal{C}\in CC(f|_{E})^{*} \}.
\end{equation*}

Viro's construction implies that, under some generic conditions, if $f_t \in\R[t][x,y]$ is the patchworking polynomial of $f$ induced by $\lambda$, then there exists $\delta>0$ such that there is an inclusion \begin{equation*}
CC(f,\tau)^{*}\hookrightarrow CC(f_t)^{*}
\end{equation*}\vspace{-0.8cm} 

for $0<|t|<\delta$.	The main purpose of this article is to extend this result to special parabolic points. 

\begin{ex}{\rm Set $f(x,y):=x^2y^2(1+x+y+y^2)$, let $\Delta:=Conv\left(\{(2,2),(3,2),(2,3),(2,4)\}\right)$ be the Newton polyhedron associated to $f$, and 
let $\lambda :\Delta \to \R_{\geq 0}$ be the convex function defined as follows
\begin{equation*}
	\lambda(i,j)= \left\{ \begin{array}{lcc}
 0 & if & i+j \leq 5, \\
 \\ i+j-5 & if & i+j > 5. 
 \end{array}
 \right.
\end{equation*}

 The subdivision of $\Delta$ induced by $\lambda$, is $\tau:=\{\Delta_1, \Delta_2\}$ where $\Delta_1:=Conv\left(\{(2,2),(3,2),(2,3)\}\right)$ and $\Delta_2:=Conv\left(\{(3,2),(2,3),(2,4)\}\right)$. The patchworking polynomial of $f$ induced by $\lambda$ is $f_{t}=x^{2}y^{2}(1+x+y+ty^{2})$. For $0<|t|<0.3$, we have the following pictures.

\begin{figure}[H]
	\begin{center}
		 \includegraphics[height=4cm]{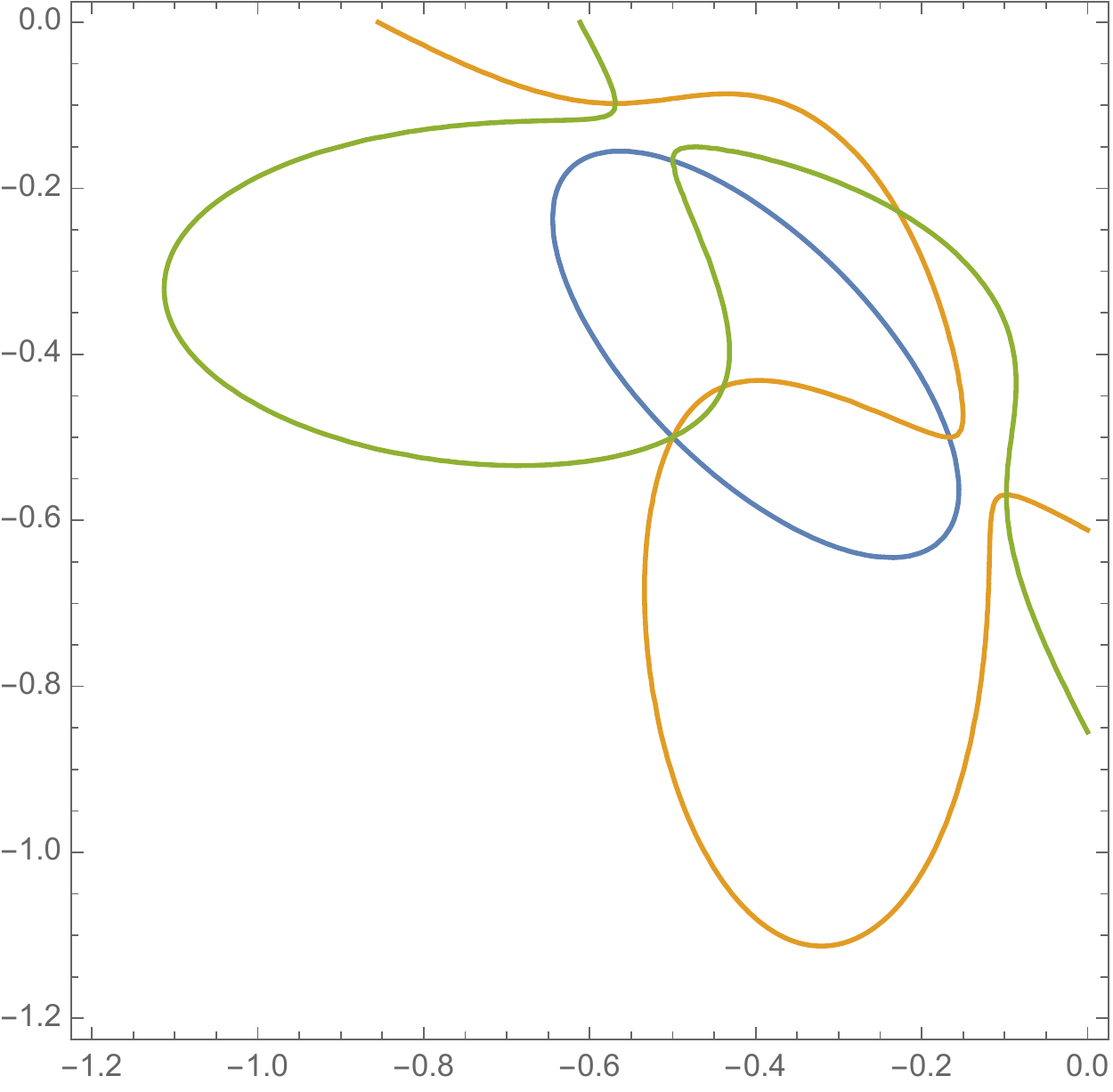}
		 \put(-100,-10){a)}
		 \hspace{1cm} 
		 \includegraphics[height=4cm]{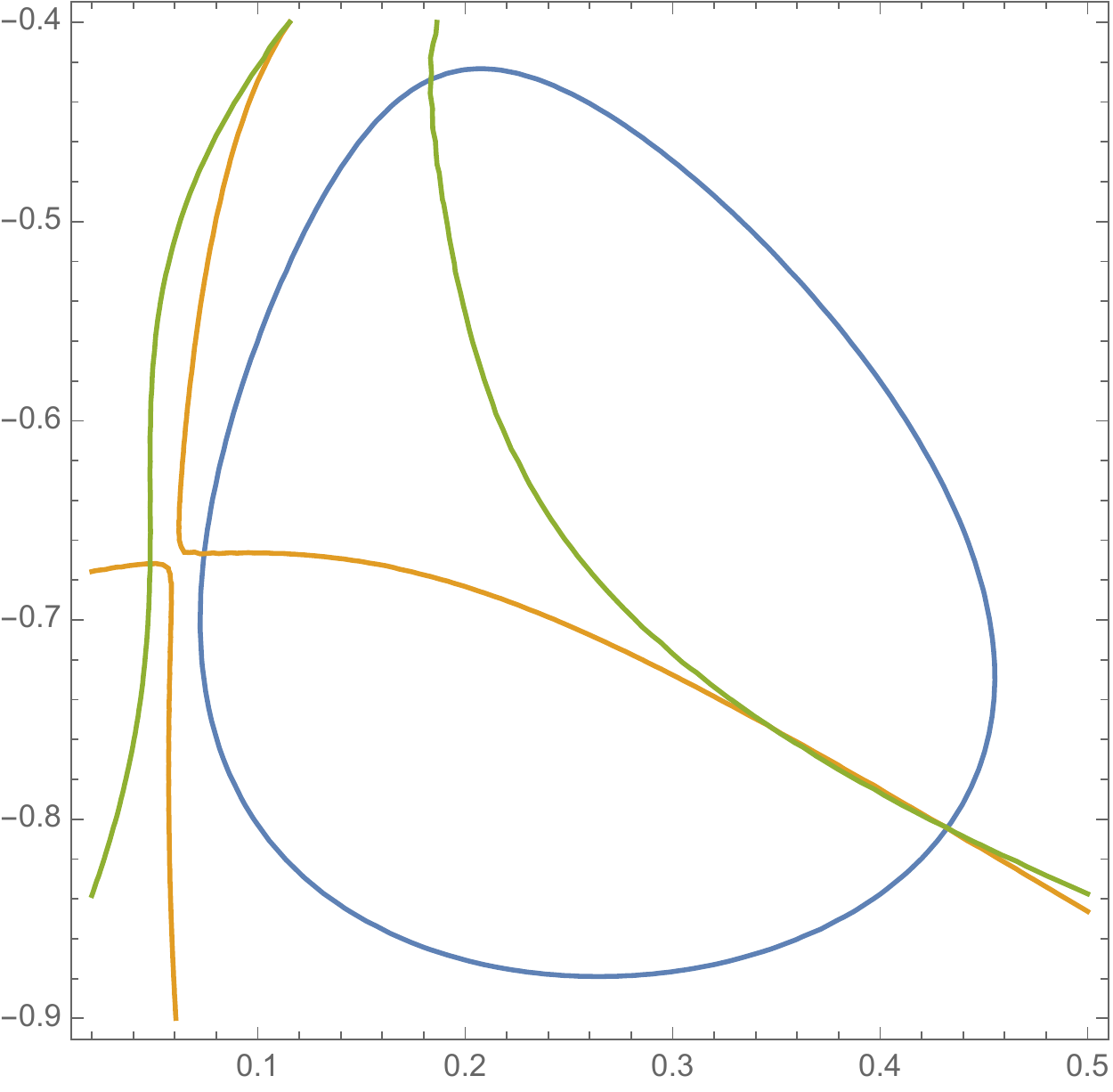}
		 \put(-100,-10){b)} 
		 \hspace{1cm} 
		 \includegraphics[height=4cm]{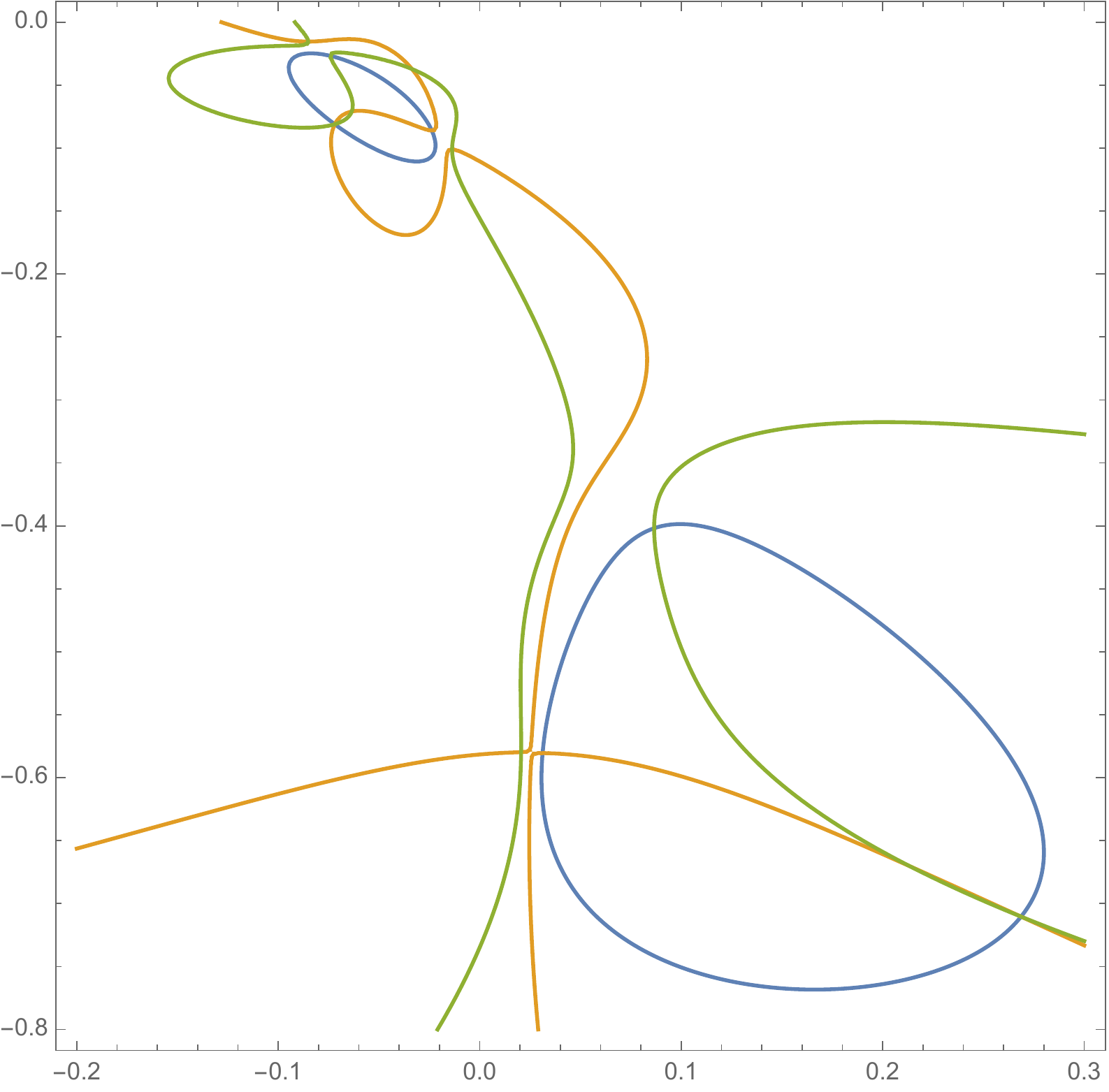}
		 \put(-100,-10){c)} 
		%\caption{ Yellow, green and blue curves are $V(E1_{f_{\Delta_i}})$, $V(E2_{f_{\Delta_i}})$, and $V(H_{f_{\Delta_i}})$ with $r \in\{1,2,3\}$}
		\label{pegado}
	\end{center}
	\caption{Figure a) shows $E1_{f|_{\Delta_1}}, E2_{f|_{\Delta_1}}$ and $H_{f|_{\Delta_1}}$; figure b) shows $E1_{f|_{\Delta_2}}, E2_{f|_{\Delta_2}}$ and $H_{f|_{\Delta_2}}$; and figure c) shows $E1_{{f_t}|_{\Delta}}, E2_{{f_t}|_{\Delta}}$ and $H_{{f_t}|_{\Delta}}$. 
	}
\end{figure}

 %yellow, green and blue curves are $V(E1_{P_{r}})$, $V(E2_{P_{r}})$, and $V(H_{P_{r}})$ %respectively, with $i=1,2$. The pictures a) and b) show separately the curves $V(E1_{P_{r}})$, %$V(E2_{P_{r}})$, and $V(H_{P_{r}})$, for $r=1,2$. In the picture c) we have the gluing of these %curves.
}
\end{ex}

 % \vspace{30pt}

 %%%%%%%%%%%%%%%%%%%%%%%%%%%%%%%%%%%%%%%%%%%%%%%%%%%%%%%%%%%%%%%%%%%%%%%%%%%%%%%%%%%%%%%%%%%%%%%%%%%%%%%%%%%%%%%%%%%%%%%%%%%%%%%%%%%%%%%%%%%%%%%%%%%%%%%%%%%%%	

\section{On Perturbation Theory}\label{sec:Deftheory}

In this section we recall some definitions and statements on perturbation theory of polynomials and curves. These statements are consequences of general results in differential topology (see for example \cite{Hirsch1994DiffTop}) and are closely related to the concept of transversality. Detailed proofs are also written in \cite{Angelito}.

Two non-empty curves $C_1, C_2 \subset \R^2$ \textit{intersect transversally at $q\in C_1\cap C_2$}, denoted by $C_1 \pitchfork_{\, q} C_2$, if they are non-singular at $q$ and their tangent lines at $q$ are transversal. %will be used in the following section to show how special parabolic points in the graph of a perturbed function behave for sufficiently small values of the parameter. %Since the proofs are simple, for the sake of completeness we will state and prove them here.
%The proofs of Propositions \ref{Prop:NonsingPerturbationcurve} \,\textrm{ii)} and \ref{prop:transpreservedunderperturb} are easy to verify.

\begin{defi}{\rm Let $p=(q,f(q))\in \R^3$ be a special parabolic point of the graph of $f\in \R[x,y]$. We will say that $p$ is a \textit{transversal special parabolic point} of $f$ if the curve $V(H_f)$ is non-singular at $q\in\R^2$ and $V(E_{1,f})\pitchfork_{\, q} V(E_{2,f})$. We will denote by $TSPP(f)$ the set of transversal special parabolic points of $\Gamma_{f}$.
		%We say that a point $p\in $The curves $C_1, C_2 \subset \R^2$ are \textit{transversal}, denoted $C_1 \pitchfork C_2$, if they are transversal at every $p\in C_1\cap C_2$. The non-empty curves $C_1, C_2, \dots, C_n \subset \R^2$ are \textit{transversal two-by-two} if $C_i \pitchfork C_j$ for every $i\neq j\in\{1,\dots, n\}$.
	 }
\end{defi}

For $i=1,2$ the tangent space to any smooth point $(x,y) \in \R^2$ in the curve $V(E_{i,f})$ is given by $\ker dE_{i,f}|_{(x,y)}$. A smooth point $p=(q,f(q))\in \R^3$ is a transversal \spp in the \graf of $f$, if \vspace{-0.7cm}

\begin{equation}\label{TangSpace}
\R^2 \simeq\ker dE_{1,f}|_{q}+\ker dE_{2,f}|_{q}. 
\end{equation}

Platonova's genericity condition \cite{PlatonovaSingularities} implies that special parabolic points are generically transversal.
 %We believe that the condition that the curves $\{H_f=0\}$, $\{E_{1,f}=0\}$, $\{E_{2,f}=0\}$ are non-singular at $q\in\R^2$ imply that a \spp is always a transversal special parabolic point.

Let $f\in \R[x,y]$ be a polynomial. A family of functions $F_t=f+tg_t$, where $g_t\in \R[t][x, y]$, will be called a \textit{perturbation} of $f$. Let $C :=V(f)$ be the curve defined by the zero set of $f\in \R[x,y]$, the family of curves $C_t := V(F_t)$, defined by a perturbation of $f$ will be called a \textit{perturbation of $C$}.	

Given a point  $q\in\R^{2}$, we will denote by $D(q,r)$, the closed disc of radious $r$ centered at $q$. 
\begin{prop}\label{Prop:LimdePerturbacionenptslimite}
	Let $F_t\in \R[t][x, y]$ be a perturbation of $f\in \R[x,y]$. For $\delta >0$, let $\displaystyle \{q_t\}_{t\in (-\delta,\delta)}$ be a collection of points in $D(q,r)\subset \R^{2}$ such that $\displaystyle \lim_{t\rightarrow 0}q_t=q$, then $\displaystyle \lim_{t\rightarrow 0}F_t(q_t)=f(q)$.		
\end{prop}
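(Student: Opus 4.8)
The plan is to reduce this to the continuity of a two-variable polynomial evaluation map, after absorbing the $t$-dependence. First I would expand $F_t(q_t) = f(q_t) + t\,g_t(q_t)$, so that it suffices to prove two things: that $f(q_t)\to f(q)$ as $t\to 0$, and that $t\,g_t(q_t)\to 0$ as $t\to 0$. The first is immediate since $f$ is a polynomial, hence continuous, and $q_t\to q$. For the second, write $g_t(x,y) = \sum_{k} t^{k} h_k(x,y)$ as a finite sum with each $h_k\in\R[x,y]$ (this is the meaning of $g_t\in\R[t][x,y]$), so $t\,g_t(x,y) = \sum_k t^{k+1} h_k(x,y)$ is a polynomial in the three variables $t,x,y$ that vanishes identically on the hyperplane $\{t=0\}$.

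The key step is then the following compactness argument. Since each $h_k$ is continuous and $D(q,r)$ is compact, there is a constant $M$ with $|h_k(x,y)|\le M$ for all $(x,y)\in D(q,r)$ and all the finitely many indices $k$ that occur; say there are $N+1$ such indices. For $|t|\le 1$ and $(x,y)=q_t\in D(q,r)$ we then have the estimate $|t\,g_t(q_t)| \le |t|\sum_{k} |t|^{k} M \le |t|\,(N+1)\,M$, which tends to $0$ as $t\to 0$. Combining with the first part via the triangle inequality, $|F_t(q_t) - f(q)| \le |f(q_t)-f(q)| + |t\,g_t(q_t)| \to 0$, which is the claim.

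I do not expect any genuine obstacle here; the statement is essentially the joint continuity of a polynomial map $\R\times\R^2\to\R$ restricted to a path entering $\{t=0\}$. The only point requiring a modicum of care is that $g_t$ genuinely involves only finitely many powers of $t$, so that the bound $\sum_k |t|^k M$ is a finite sum and stays bounded as $t\to 0$; this is guaranteed by the hypothesis $g_t\in\R[t][x,y]$ (a polynomial ring, not a power series ring), and the confinement of the $q_t$ to the fixed compact disc $D(q,r)$ supplies the uniform bound $M$ on the coefficient polynomials. An alternative, slicker phrasing is to note that $(t,x,y)\mapsto F_t(x,y)$ is a polynomial, hence continuous on $\R^3$, and that $(t,q_t)\to(0,q)$ in $\R^3$, so $F_t(q_t)\to F_0(q) = f(q)$ directly; I would likely present this one-line version and relegate the $\varepsilon$-estimate above to a parenthetical remark.
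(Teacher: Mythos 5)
Your argument is correct. The paper itself gives no proof of this proposition; it only remarks that the statements of Section 4 follow from general results in differential topology and refers to the thesis cited as \cite{Angelito} for details. Your observation that $(t,x,y)\mapsto F_t(x,y)$ is a polynomial on $\R^3$, hence jointly continuous, together with $(t,q_t)\to(0,q)$, is exactly the right (and essentially only) idea, and the explicit estimate $|t\,g_t(q_t)|\le |t|(N+1)M$ on the compact disc $D(q,r)$ correctly handles the finitely many powers of $t$ in $g_t\in\R[t][x,y]$. No gap.
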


\begin{prop}\label{Prop:transpreservedunderperturb}
	Let $C_t, D_t\subset \R^2$ be perturbations of the curves $C, D \subset \R^2$. If $C \pitchfork_{\, q} D$, then for any $r>0$ there exists $\delta>0$ such that, for $|t|<\delta$, the curves $C_t$ and $D_t$ intersect transversally at some point $q_t\in D(q,r)$. Moreover, $q_t$ can be chosen so that $\displaystyle \lim_{t\rightarrow 0}q_t=q$.
\end{prop}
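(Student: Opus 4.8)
The plan is to apply the implicit function theorem to the map that simultaneously tests membership in both curves, and then to use openness of the transversality condition. Write $C=V(f)$ and $D=V(g)$, and let $F_t,G_t\in\R[t][x,y]$ be the perturbations with $C_t=V(F_t)$, $D_t=V(G_t)$, $F_0=f$, $G_0=g$; note that $F_t$ and $G_t$ are polynomials in $(x,y,t)$ jointly, hence $C^\infty$. Consider
\[
\Phi:\R^2\times\R\to\R^2,\qquad \Phi(x,y,t):=\bigl(F_t(x,y),\,G_t(x,y)\bigr),
\]
so that $\Phi^{-1}(0,0)$ is exactly the set of pairs $(q',t)$ with $q'\in C_t\cap D_t$. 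Since $q\in C\cap D$ we have $\Phi(q,0)=(0,0)$, and $C\pitchfork_{\,q}D$ says precisely that $C$ and $D$ are non-singular at $q$ with non-parallel tangent lines, i.e. that $\nabla f(q)$ and $\nabla g(q)$ are linearly independent; equivalently, the partial Jacobian $\partial\Phi/\partial(x,y)$ at $(q,0)$, which equals $\begin{pmatrix} f_x(q) & f_y(q) \\ g_x(q) & g_y(q)\end{pmatrix}$, is invertible.

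First I would invoke the implicit function theorem at $(q,0)$: there exist $\delta_0>0$ and a $C^\infty$ map $t\mapsto q_t$ on $(-\delta_0,\delta_0)$ with $q_0=q$ and $\Phi(q_t,t)=(0,0)$, so that $q_t\in C_t\cap D_t$ for $|t|<\delta_0$. Continuity of $t\mapsto q_t$ gives $\lim_{t\to 0}q_t=q$ (this is the ``moreover'' clause), and hence, after shrinking $\delta_0$, $q_t\in D(q,r)$ for $|t|<\delta_0$.

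It then remains to show that the intersection $C_t\cap D_t$ at $q_t$ is transversal for small $t$, i.e. that both curves are non-singular there and that their tangent lines are transversal. Both are subsumed in the non-vanishing of
\[
J(t):=\det\begin{pmatrix} (F_t)_x(q_t) & (F_t)_y(q_t) \\ (G_t)_x(q_t) & (G_t)_y(q_t)\end{pmatrix},
\]
since $J(t)\neq 0$ forces each row to be non-zero and the two rows to be linearly independent, while the tangent lines at $q_t$ are $\ker dF_t|_{q_t}$ and $\ker dG_t|_{q_t}$. Now each entry of the matrix defining $J(t)$ is the value at $q_t$ of a perturbation of the corresponding partial derivative of $f$ or $g$ (for instance $(F_t)_x=f_x+t(g_t)_x$ with $(g_t)_x\in\R[t][x,y]$), so applying Proposition \ref{Prop:LimdePerturbacionenptslimite} to these four perturbations, together with $q_t\to q$, yields $J(t)\to\det\begin{pmatrix} f_x(q) & f_y(q) \\ g_x(q) & g_y(q)\end{pmatrix}\neq 0$. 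Choosing $\delta\in(0,\delta_0]$ with $J(t)\neq 0$ for $|t|<\delta$ completes the proof.

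I do not expect a real obstacle: the argument is a routine application of the implicit function theorem plus the fact that transversality is an open condition. The only points deserving care are the joint smoothness of $\Phi$ in $(x,y,t)$ (immediate, since $F_t,G_t$ are polynomials in all the variables) and the translation of ``$C_t$ and $D_t$ meet transversally at $q_t$'' into the single determinant condition $J(t)\neq 0$, which is precisely what lets one carry both the hypothesis at $(q,0)$ and the desired conclusion at $(q_t,t)$ through the same object.
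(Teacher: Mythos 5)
Your proof is correct. Note that the paper does not actually prove this proposition: it is stated as a consequence of general results in differential topology, with details deferred to \cite{Hirsch1994DiffTop} and \cite{Angelito}, and your implicit-function-theorem argument (solve $\Phi(x,y,t)=(F_t(x,y),G_t(x,y))=(0,0)$ for $(x,y)$ as a smooth function of $t$, then use continuity of the Jacobian determinant $J(t)$ to preserve transversality) is precisely the standard argument those references would supply. The only convention worth making explicit is that ``$C$ is non-singular at $q$'' must be read as $\nabla f(q)\neq 0$ --- which is the paper's usage, since it takes tangent lines at smooth points to be kernels of differentials --- because the set $V(f)$ can be a smooth $1$-manifold at a point where the gradient vanishes (e.g.\ $f=(y-x)^2$); with that reading every step of your argument goes through.
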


The condition of transversality is crucial in Proposition \ref{Prop:transpreservedunderperturb}.
\begin{ex}\label{exampleTangintersection}{\rm
Let $C_t:=V(y-x^2-t), D_t:=V(y+x^2+t)$ be perturbations of the curves $C:=V(y-x^2), D:=V(y+x^2)$. The curves $C$ and $D$ have one intersection point, but $C \cap_{\, \underline{0}} D$, is non-transversal. For positive values of $t$ the intersection $C_{t} \cap_{\, \underline{0}} D_{t}$ inside $D(\underline{0},r)$ is empty, while for negative values of $t$ the intersection $C_{t} \cap_{\, \underline{0}} D_{t}$ inside $D(\underline{0},r)$ has two points, so the number of points in the intersection of $C$ and $D$ is not preserved under small perturbations.
}
\end{ex}

Proposition \ref{Prop:transpreservedunderperturb} cannot be extended to more than two curves.

\begin{ex}\label{exampleTrintersection}{\rm 
Let $C_t:=V(x-t), D_t:=V(y-t)$ and $E_t:=V(y+x+t)$ be perturbations of the curves $C:=V(x), D:=V(y)$ and $E:=V(y+x)$. The curves $C,D$ and $E$ have only one transversal intersection point. For small values of $|t|$ the intersection $C_{t}\cap D_{t}\cap E_{t}$ inside $D(\underline{0},r)$ is empty, so the number of points in the intersection of $C,D$ and $E$ is not preserved under small perturbations.
}
\end{ex}

\begin{prop}\label{Prop:NonsingPerturbationcurve}{\rm Let $C_t:=V(F_t)\subset \R^2$ be a perturbation of the curve $C:=V(f)\subset \R^2$. Let $C$ be non-singular inside $D(q,R)$ for $q\in C$. Then there exists $\delta>0$ such that for $|t|<\delta$, the intersection $C_t\cap D(q,R)$ is non-empty and non-singular. 
		
	}
\end{prop}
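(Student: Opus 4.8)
The plan is to treat this as a statement about the persistence of a non-empty, non-singular piece of a plane curve under small perturbations of its defining polynomial, and to prove it by localizing near the compact piece $C\cap D(q,R)$ and applying an implicit-function/Rouch\'e-type argument. First I would observe that since $C=V(f)$ is non-singular inside the closed disc $D(q,R)$, the gradient $\nabla f$ is non-vanishing on the compact set $C\cap D(q,R)$; by continuity there is a compact tubular neighbourhood $N=\{p\in D(q,R);\ \mathrm{dist}(p,C\cap D(q,R))\le \varepsilon\}$ on which $\|\nabla f\|\ge c>0$ for some constant $c$. Shrinking $\varepsilon$ if necessary, $N$ is a union of arcs along which $C$ is a smooth $1$-manifold with boundary and on which $f$ has no critical points.

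Next I would use that $F_t=f+tg_t$ is a perturbation, so on the compact disc $D(q,R)$ (equivalently on $N$) both $F_t\to f$ and $\nabla F_t\to \nabla f$ uniformly as $t\to 0$, since $g_t\in\R[t][x,y]$ has coefficients depending polynomially on $t$ and hence bounded for $|t|\le 1$ on the compact disc. Therefore there exists $\delta>0$ such that for $|t|<\delta$ one has $\|\nabla F_t\|\ge c/2>0$ on $N$; this already gives that $C_t\cap N$ is non-singular (every point of $V(F_t)$ inside $N$ is a regular point of $F_t$, so $C_t$ is a smooth $1$-manifold there). It remains to show $C_t\cap D(q,R)$ is non-empty for small $|t|$, and the natural way is to produce a point of $C_t$ inside $N\subset D(q,R)$ by a continuity argument.

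For non-emptiness I would pick a point $q_0$ in the interior of the arc $C\cap D(q,R)$ (for instance $q$ itself, or a point of $C$ well inside the disc if $q$ lies on the boundary circle) and a small segment $\sigma$ through $q_0$ transverse to $C$, chosen so that $\sigma\subset N\subset D(q,R)$ and $f$ changes sign at the two endpoints of $\sigma$ (this is possible exactly because $q_0$ is a regular point of $f$, so $f$ restricted to $\sigma$ has non-zero derivative at $q_0$ and hence takes strictly opposite signs at the endpoints of a short enough $\sigma$). Since $F_t\to f$ uniformly on the compact segment $\sigma$, for $|t|$ small enough $F_t$ still has opposite signs at the two endpoints of $\sigma$, so by the intermediate value theorem $F_t$ vanishes somewhere on $\sigma\subset D(q,R)$; hence $C_t\cap D(q,R)\neq\emptyset$. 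Taking the minimum of the finitely many thresholds on $\delta$ produced above yields a single $\delta>0$ that works for both conclusions.

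The main obstacle, or rather the point requiring the most care, is the uniformity: one must make sure a single $\delta$ simultaneously forces $\nabla F_t$ to stay bounded away from $0$ on the neighbourhood $N$ (giving non-singularity) and forces the sign change of $F_t$ on $\sigma$ to persist (giving non-emptiness), and that $N$ and $\sigma$ are chosen genuinely inside $D(q,R)$ so that the located point lies in the disc rather than merely near it. Alternatively, one could phrase the non-emptiness step by noting that $f$ takes both signs on $D(q,R)$ near $C$ and invoking Proposition~\ref{Prop:LimdePerturbacionenptslimite} together with a compactness argument, but the direct intermediate-value argument on a transverse segment is the cleanest. All the statements used (smoothness of $C$ inside the disc, the definition of perturbation, uniform convergence on compacta) are available in the excerpt, so no further machinery is needed.
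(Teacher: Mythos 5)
The paper never proves this proposition: Section~\ref{sec:Deftheory} explicitly recalls these statements as consequences of standard differential topology, citing \cite{Hirsch1994DiffTop} and the thesis \cite{Angelito} for detailed proofs. So there is no in-paper argument to compare against, and your proof has to stand on its own. Its overall strategy is the right one, and the non-emptiness part is complete and correct: since $q\in C$ lies in the open disc and is a regular point of $f$, a short transverse segment $\sigma$ on which $f$ changes sign persists as a sign change of $F_t$ by uniform convergence on compacta (which does hold, because $F_t=f+tg_t$ with $g_t\in\R[t][x,y]$), and the intermediate value theorem produces a zero of $F_t$ inside $D(q,R)$.

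There is, however, one concrete gap in the non-singularity part. You prove that $\|\nabla F_t\|\ge c/2$ on the tubular neighbourhood $N$ of $C\cap D(q,R)$, and conclude that $C_t\cap N$ is non-singular; but the proposition asserts that $C_t\cap D(q,R)$ is non-singular, and you never rule out points of $C_t$ lying in $D(q,R)\setminus N$, where you have no control on $\nabla F_t$. The missing step is the containment $C_t\cap D(q,R)\subset N$ for small $|t|$: the set $K:=\{p\in D(q,R);\ \mathrm{dist}(p,C\cap D(q,R))\ge\varepsilon\}$ is compact and disjoint from $C$ (any point of $C$ in $D(q,R)$ is at distance $0$ from $C\cap D(q,R)$), so $|f|\ge m>0$ on $K$, hence $|F_t|\ge m/2$ on $K$ for small $|t|$ and $C_t$ cannot meet $K$. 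With that line added your argument is complete. Alternatively, the whole non-singularity claim follows from a shorter compactness contradiction: if $p_{t_k}\in D(q,R)$ satisfied $F_{t_k}(p_{t_k})=0$ and $\nabla F_{t_k}(p_{t_k})=0$ with $t_k\to 0$, a convergent subsequence would produce a singular point of $C$ inside $D(q,R)$, contradicting the hypothesis.
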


Given a non-empty subset $A\subset \R^2$ and $\varepsilon>0$, we will denote by $\Tub_{\varepsilon}(A)\subset \R^2$, the set of points whose distance to $A$ is no greater than $\varepsilon$ and call it the \textit{tubular neighbourhood} of radious $\varepsilon$ centered along $A$, that is,
\begin{equation*}
	 \Tub_{\varepsilon}(A):= \bigcup_{q\in A}D(q,\varepsilon).
\end{equation*}
We will denote by $\mathrm{Int} \; \Tub_{\varepsilon}(A)$, the interior of the tubular neighbourhood $\Tub_{\varepsilon}(A)$.

%\[\Tub_{\varepsilon}(C)= \{z\in \R^2; |z-w|\leq \varepsilon; \; \mbox{for some} \;w\in C\}.\]
%\{(x,y)\in \R^2; |(x,y)-(z,w)|\leq \varepsilon; \; (x,y)\in C\},

\begin{prop}\label{Prop:NonsingPerturbationcurve1}
	Let $C_t:=V(F_t)\subset \R^2$ be a perturbation of the curve $C:=V(f)\subset \R^2$. Given $\varepsilon>0$, $R>0$ and $q\in C$, there exists $\delta>0$ such that for $|t|<\delta$, the intersection $C_t\cap D(q,R)$ is contained in the tubular neighbourhood $\Tub_{\varepsilon}(C)$.
	% If $C$ is non-singular inside the closed disc $D(p,R)\subset \R^2$ of radious $R$ centered at $p\in C$, there exists $\delta>0$ such that for %$|t|<\delta$, the intersection $C_t\cap D(p,R)\neq \emptyset$ and is non-singular. 
\end{prop}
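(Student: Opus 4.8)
The plan is to reduce the statement to a compactness argument over the curve $C = V(f)$ together with the local description of $V(F_t)$ near a point of $C$ provided by Propositions \ref{Prop:LimdePerturbacionenptslimite} and \ref{Prop:NonsingPerturbationcurve}. First I would fix a point $q \in C$ and apply Proposition \ref{Prop:NonsingPerturbationcurve} (or rather the continuity statement in Proposition \ref{Prop:LimdePerturbacionenptslimite}) to obtain the following \emph{local} claim: for every $q' \in D(q,R)$ and every $\varepsilon > 0$, there is a radius $\rho = \rho(q',\varepsilon) > 0$ and a threshold $\delta(q',\varepsilon) > 0$ such that, for $|t| < \delta(q',\varepsilon)$, the set $C_t \cap D(q',\rho)$ is contained in $\Tub_{\varepsilon}(C)$. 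The point is that if $q' \notin C$, then $f(q') \neq 0$, and by continuity of $(t,x,y)\mapsto F_t(x,y)$ — which is exactly the content of Proposition \ref{Prop:LimdePerturbacionenptslimite} in its pointwise form — we have $F_t \neq 0$ on a small disc $D(q',\rho)$ for all sufficiently small $|t|$, so $C_t \cap D(q',\rho) = \emptyset$ and the inclusion into $\Tub_\varepsilon(C)$ holds vacuously. If instead $q' \in C$, then $q' \in \Tub_\varepsilon(C)$ and, shrinking $\rho$ so that $D(q',\rho)\subset \Tub_\varepsilon(C)$, any point of $C_t \cap D(q',\rho)$ automatically lies in $\Tub_\varepsilon(C)$.

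Next I would pass from the local claim to the global one by compactness. Fix $\varepsilon > 0$, $R > 0$ and $q \in C$. The disc $D(q,R)$ is compact, so the open cover $\{ \mathrm{Int}\, D(q',\rho(q',\varepsilon)) \}_{q' \in D(q,R)}$ admits a finite subcover, say by discs centered at $q'_1,\dots,q'_N$ with radii $\rho_1,\dots,\rho_N$. Set $\delta := \min_{1\le k \le N}\delta(q'_k,\varepsilon) > 0$. Then for $|t| < \delta$ and any $p \in C_t \cap D(q,R)$, the point $p$ lies in some $D(q'_k,\rho_k)$, hence in $C_t \cap D(q'_k,\rho_k) \subset \Tub_\varepsilon(C)$ by the local claim. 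This proves $C_t \cap D(q,R) \subset \Tub_\varepsilon(C)$ for $|t| < \delta$, as required.

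The only genuinely delicate point is making the pointwise continuity uniform on a small disc $D(q',\rho)$ — i.e. upgrading ``$F_t(q') \to f(q')$'' to ``$F_t \neq 0$ on $D(q',\rho)$ for small $|t|$'' when $q' \notin C$. This is a standard equicontinuity estimate: writing $F_t = f + t g_t$ with $g_t \in \R[t][x,y]$, one bounds $|g_t|$ uniformly on the compact disc $D(q,R+1)$ for $|t| \le 1$ (the coefficients of $g_t$ as a polynomial in $t$ are fixed, so $g_t$ is bounded on any compact set, uniformly for bounded $t$), and then for $q'\notin C$ one picks $\rho$ small enough that $|f| \ge |f(q')|/2$ on $D(q',\rho)$ and $|t|$ small enough that $|t g_t| < |f(q')|/4$ there; this is essentially the estimate underlying Proposition \ref{Prop:LimdePerturbacionenptslimite}, and I would simply invoke that proposition (or its proof) rather than redo it. Everything else is the compactness packaging described above.
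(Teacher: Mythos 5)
Your argument is correct and complete. Note that the paper itself gives no proof of Proposition \ref{Prop:NonsingPerturbationcurve1}: it is stated without proof, with the surrounding text deferring to general results in differential topology \cite{Hirsch1994DiffTop} and to the detailed proofs in \cite{Angelito}. Your self-contained compactness argument is the standard way to establish the statement and fills that gap cleanly: the dichotomy on $q' \in D(q,R)$ (either $f(q')\neq 0$, in which case the uniform bound on $g_t$ over $D(q,R+1)\times[-1,1]$ forces $F_t\neq 0$ on a small disc about $q'$ for small $|t|$, or $q'\in C$, in which case $D(q',\varepsilon)\subset\Tub_{\varepsilon}(C)$ by definition of the tubular neighbourhood) is exactly the right local input, and the finite-subcover step with $\delta$ taken as the minimum of the finitely many thresholds is airtight. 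The one point you flag as delicate --- upgrading pointwise convergence $F_t(q')\to f(q')$ to a uniform lower bound for $|F_t|$ on a whole disc --- is indeed the only place where one must use that $F_t=f+tg_t$ with $g_t$ polynomial in $(t,x,y)$, hence bounded on compacta uniformly in bounded $t$; your treatment of it is correct, and it is slightly more than what Proposition \ref{Prop:LimdePerturbacionenptslimite} literally states (which concerns a single sequence of points), so it is right that you spell out the estimate rather than only cite that proposition.
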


%A detailed proof of this proposition can be found in \cite{Angelito}
%\begin{proof} Let $F_t=f+t\, g_t$ be with $g_t\in\R[t][x,y]$. 
%		The sets $B_k:=\{(x,y)\in D(q,R);|f(x,y)|\leq k\}$ define a sequence of compact sets with the property that $B_{k}\subset B_{k^{'}}$ for $k<k^{'}$ and such that $\displaystyle C\cap D(q,R)=\bigcap_{k\geq 0}B_k $. 
%		The sets $W_k:=D(q,R)\setminus B_k$ with $k\geq 0$ define a covering of $U=D(q,R)\setminus \Int \; \Tub_{\varepsilon}(C)$. Take a finite subcovering $\displaystyle U\subset \bigcup^l_{j=1} W_{i_j}$. Hence, $U\subset D(q,R)\setminus B_{l}$ for some $l>0$, and thus, $B_{l}\subset \mathrm{Int} \; \Tub_{\varepsilon}(C)\cap D(q,R)\subset \Tub_{\varepsilon}(C)\cap D(q,R)$. %The closure is compact, and so is $D$ minus the interior
		
%		Set $M:=\max \{|g_t(x,y)|;(x,y)\in D(q,R), \;t\in[-1,1]\}$, then for $|t|<\frac{l}{l+M+1}<1$ and $(x,y)\in D(q,R)$, we have $|tg_t(x,y)|<l$. This implies that $C_t\cap D(q, R)\subset \{(x,y)\in \R^2;|f(x,y)|=|tg_t(x,y)|\leq l\}\cap D(q,R)\subset B_{l}\subset \Tub_{\varepsilon}(C)\cap D(q,R)$, as wanted. 
			
%\end{proof}

\section{Transversal special parabolic points under perturbation of functions}\label{sec:SPPsunderperturbation}

Special parabolic points can be determined by the intersection of two tangent curves (see Proposition \ref{PPE=CfyHf} \,\textrm{iii)}), or the intersection of three curves (see Proposition \ref{PPE=H,E1,E2=0}); however, as we have seen in examples \ref{exampleTangintersection} and \ref{exampleTrintersection}, both of these situations are generally not preserved under small perturbations.

 In \cite{E.Landis}, E. Landis states that, under some general conditions, special parabolic points are preserved under perturbations. He doesn't give a proof of this fact. %He declares not being able to find this proof in the literature nor gives it. 
 We gather that this fact is a consequence of Platonova's work. However, here we give a detailed proof for transversal special parabolic points. 

\begin{prop}\label{Prop:H,E1,E2underPerturbations}
	Let $f\in \R[x,y]$ be a polynomial of degree $d\geq 3$. %, with Hessian curves defined by $\{H_f=0\}$ and $\{H_g=0\}$. 
		If $V(F_{t})\subset \R^2$ is a perturbation of the curve $V(f)$, then the curves defined by the polynomials $H_{F_t}$, $E_{1,F_{t}}$ and $E_{2,F_{t}}$, %whose projection define the set of special parabolic points in the \graf of $F_{t}$, 
		are perturbations of the curves defined by $H_f$, $E_{1,f}$ and $E_{2,f}$, respectively.
\end{prop}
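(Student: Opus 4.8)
The plan is to unwind the definitions and observe that each of $H_{F_t}$, $E_{1,F_t}$, $E_{2,F_t}$ is obtained from $H_f$, $E_{1,f}$, $E_{2,f}$ by adding a term of the form $t\,(\text{polynomial in }t,x,y)$, which is exactly what it means to be a perturbation in the sense defined in Section \ref{sec:Deftheory}. First I would write $F_t = f + t g_t$ with $g_t\in\R[t][x,y]$ and note that the second-order partial derivatives are $\R$-linear in the function, so $(F_t)_{xx} = f_{xx} + t(g_t)_{xx}$, and similarly for the other entries of the Hessian matrix. Thus $\mathrm{Hess}(F_t) = \mathrm{Hess}(f) + t\,\mathrm{Hess}(g_t)$ as a matrix of polynomials in $\R[t][x,y]$.

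Next I would compute $H_{F_t} = \det \mathrm{Hess}(F_t)$. Expanding the determinant of $\mathrm{Hess}(f) + t\,\mathrm{Hess}(g_t)$, the $t^0$ term is precisely $\det\mathrm{Hess}(f) = H_f$, and every remaining term carries a factor of $t$; hence $H_{F_t} = H_f + t\, h_t$ for some $h_t\in\R[t][x,y]$, so $V(H_{F_t})$ is a perturbation of $V(H_f)$. For the $E_{i,F_t}$ the argument is the same but one level deeper: from $H_{F_t} = H_f + t h_t$ we get $(H_{F_t})_x = (H_f)_x + t (h_t)_x$ and $(H_{F_t})_y = (H_f)_y + t (h_t)_y$, and substituting into the matrix product in Definition \ref{Ei's} ii),
\begin{equation*}
\begin{pmatrix} E_{1,F_t}\\ E_{2,F_t}\end{pmatrix}
= \bigl(\mathrm{Hess}(f) + t\,\mathrm{Hess}(g_t)\bigr)
\begin{pmatrix} -(H_f)_y - t(h_t)_y\\ \;\;(H_f)_x + t(h_t)_x\end{pmatrix}.
\end{equation*}
Expanding this product, the $t^0$ part is $\bigl(E_{1,f},E_{2,f}\bigr)^{\mathrm t}$ and all other contributions are divisible by $t$, so $E_{i,F_t} = E_{i,f} + t\, e_{i,t}$ with $e_{i,t}\in\R[t][x,y]$ for $i=1,2$. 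This is exactly the assertion that $V(E_{i,F_t})$ is a perturbation of $V(E_{i,f})$.

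The only genuinely non-formal point to watch is that a ``perturbation of a curve'' was defined via a perturbation of the \emph{defining polynomial}: one must make sure the leftover coefficients $h_t$, $e_{1,t}$, $e_{2,t}$ genuinely lie in $\R[t][x,y]$ and not, say, in $\R[t,t^{-1}][x,y]$ — this is automatic here because $F_t = f + t g_t$ has polynomial (not Laurent) dependence on $t$ and differentiation in $x,y$ and taking determinants/products of polynomials preserves this. I do not expect a serious obstacle; the hypothesis $d\geq 3$ is there only to guarantee $f_{xx},f_{xy},f_{yy}$ are not all constant so that the statement is not vacuous, and it plays no role in the computation itself. The whole proof is thus a bookkeeping of the $t$-adic filtration on $\R[t][x,y]$ under the operations $\partial_x,\partial_y,\det$, and matrix multiplication.
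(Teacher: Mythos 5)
Your proposal is correct and follows essentially the same route as the paper: both write $F_t=f+tg_t$, observe that the Hessian matrix, its determinant, and the products defining $E_{1,F_t},E_{2,F_t}$ each decompose as the corresponding object for $f$ plus a term divisible by $t$ with coefficients in $\R[t][x,y]$. The only difference is that the paper writes out the remainder polynomials explicitly while you argue by tracking the $t^0$ term, which is a matter of presentation rather than substance.
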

\begin{proof}
	The Hessian of $F_{t}(x,y)=f(x,y)+t\,g_t(x,y)$ is given by $H_{F_{t}}(x,y)= H_{f}(x,y)+t\widetilde{h}_t(x,y)$, where $\widetilde{h}_t:=(f_{xx}{g_t}_{yy}+{g_t}_{xx}f_{yy}-2f_{xy}{g_t}_{xy})+tH_{g_t}$ and $H_{f}$, $H_{g_t}$ are the Hessians of $f$ and $g_t$, respectively. Hence, the curve $V(H_{F_t})$ is a perturbation of the curve $V(H_{f})$.
	
	By definition, the polynomials $E_{1,F_{t}}$ and $E_{2,F_{t}}$, are given by
	\begin{align*}
		E_{1,F_{t}}(x,y):&= (-H_{F_{t}})_y(F_{t})_{xx}+(H_{F_{t}})_{x}(F_{t})_{xy}=E_{1,f}(x,y)+t\widetilde{e_1}_t(x,y) \; \mbox{and}\\
		E_{2,F_{t}}(x,y):&= (-H_{F_{t}})_y(F_{t})_{xy}+(H_{F_{t}})_{x}(F_{t})_{yy}=E_{2,f}(x,y)+t\widetilde{e_2}_t(x,y),
	\end{align*}	
	where $\widetilde{e_1}_t(x,y)=\psi_{1}+t(\psi_{2}+tE_{1,g_t})$ and $\widetilde{e_2}_t(x,y)=\xi_{1}+t(\xi_{2}+tE_{2,g_t})$ are perturbations of $\psi_{1}=-(H_f)_y{g_t}_{xx}+(H_f)_x{g_t}_{xy}-\varphi_yf_{xx}+\varphi_{x}f_{xy}$ and $\xi_{1}=-(H_f)_y{g_t}_{xy}+(H_f)_x{g_t}_{yy}-\varphi_yf_{xy}+\varphi_{x}f_{yy}$, respectively. The remaining polynomials are given by $\psi_2=-\varphi_{y}{g_t}_{xx}+\varphi_{x}{g_t}_{xy}-(H_{g_t})_yf_{xx}+(H_{g})_xf_{xy}$ and $\xi_{2}=-\varphi_{y}{g_t}_{xy}+\varphi_{x}{g_t}_{yy}-(H_{g_t})_yf_{xy}+(H_{g_t})_xf_{yy}$, which are given in terms of the Hessians of $f$ and $g_t$. This way, the curves $V(E_{1,F_t})$ and $V(E_{2,F_t})$ are perturbations of the curves $V(E_{1, f})$ and $V(E_{2, f})$, respectively, as claimed.	
\end{proof}

\begin{cor}\label{HFt non-sing}
	Let $f\in \R[x,y]$ be a polynomial of degree $d\geq 3$ and let $\Omega \subset \R^2$ be a bounded region in $\R^2$. If $F_t$ is a perturbation of $f$ and the curve $V(H_f)$ is non-singular inside the closure of $\Omega$, then the Hessian curve $V(H_{F_t})$ of $F_t$ is non-singular in $\Omega$ for sufficiently small values of $t$.
\end{cor}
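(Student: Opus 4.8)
The plan is to combine Proposition \ref{Prop:H,E1,E2underPerturbations}, which tells us that $V(H_{F_t})$ is a perturbation of $V(H_f)$ in the precise sense defined earlier, with Proposition \ref{Prop:NonsingPerturbationcurve}, which guarantees that a perturbation of a curve which is non-singular on a closed disc stays non-singular on that disc for small $t$. The only genuine work is bookkeeping to pass from the local (single disc) statement to the statement on a bounded region $\Omega$, and this is handled by a standard compactness argument.

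First I would record that, by Proposition \ref{Prop:H,E1,E2underPerturbations}, the family $V(H_{F_t})$ is a perturbation of $V(H_f)$; explicitly $H_{F_t}=H_f+t\widetilde h_t$ with $\widetilde h_t\in\R[t][x,y]$, so the hypotheses of the propositions in Section \ref{sec:Deftheory} apply. Next, since $V(H_f)$ is non-singular inside the closure $\overline{\Omega}$, for each point $q\in V(H_f)\cap\overline{\Omega}$ there is a radius $R_q>0$ with $V(H_f)$ non-singular on $D(q,R_q)$, and by Proposition \ref{Prop:NonsingPerturbationcurve} a threshold $\delta_q>0$ such that $V(H_{F_t})\cap D(q,R_q)$ is non-singular for $|t|<\delta_q$. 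The open discs $\{\Int D(q,R_q/2)\}$ cover the compact set $V(H_f)\cap\overline{\Omega}$, so finitely many of them, say centered at $q_1,\dots,q_N$, already cover it. Set $\delta:=\min\{\delta_{q_1},\dots,\delta_{q_N}\}>0$.

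It then remains to see that for $|t|<\delta$ the curve $V(H_{F_t})$ has no singular point in $\Omega$. A singular point of $V(H_{F_t})$ in $\Omega$ would in particular lie in $\overline{\Omega}$; I would like to argue that for $t$ small any point of $V(H_{F_t})\cap\overline{\Omega}$ lies in one of the chosen discs $D(q_i,R_{q_i})$, hence is non-singular by construction, a contradiction. For this I would first shrink $\delta$ further, invoking Proposition \ref{Prop:NonsingPerturbationcurve1} with a fixed $R$ large enough that $\overline{\Omega}\subset D(q',R)$ for some $q'\in V(H_f)$ (enlarging $V(H_f)$'s relevant portion if necessary, or applying the proposition finitely many times), and $\varepsilon$ chosen small enough that $\Tub_\varepsilon(V(H_f)\cap\overline\Omega)$ is contained in $\bigcup_i \Int D(q_i,R_{q_i})$ — the latter is possible because the compact set $V(H_f)\cap\overline\Omega$ is covered by the open sets $\Int D(q_i,R_{q_i}/2)$ and a Lebesgue-number argument gives a uniform $\varepsilon$. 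Then for $|t|$ below the resulting threshold, $V(H_{F_t})\cap\overline\Omega\subset\Tub_\varepsilon(V(H_f))\cap\overline\Omega\subset\bigcup_i D(q_i,R_{q_i})$, and on each such disc $V(H_{F_t})$ is non-singular, so $V(H_{F_t})$ is non-singular throughout $\Omega$.

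The main obstacle, such as it is, is purely organisational: making sure the compactness argument correctly controls \emph{where} the perturbed Hessian curve can live, since Proposition \ref{Prop:NonsingPerturbationcurve} by itself only controls non-singularity on a disc and says nothing about points of $V(H_{F_t})$ far from $V(H_f)$; this is exactly the role of Proposition \ref{Prop:NonsingPerturbationcurve1}, which confines $V(H_{F_t})$ to a tubular neighbourhood of $V(H_f)$. Everything else is routine, and no new computation with the polynomials $\widetilde h_t$ is needed beyond what Proposition \ref{Prop:H,E1,E2underPerturbations} already supplies.
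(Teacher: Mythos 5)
Your proof is correct and follows the same route as the paper, whose own proof simply declares the corollary a direct consequence of Propositions \ref{Prop:H,E1,E2underPerturbations} and \ref{Prop:NonsingPerturbationcurve}; you have filled in the compactness bookkeeping, and the confinement step via Proposition \ref{Prop:NonsingPerturbationcurve1}, that the paper leaves implicit. The only loose end is that $\Tub_{\varepsilon}(V(H_f))$ may contain points of $\overline{\Omega}$ that are close to portions of $V(H_f)$ lying just outside $\overline{\Omega}$, so you should cover the slightly larger compact set $V(H_f)\cap\Tub_{\varepsilon}(\overline{\Omega})$ rather than $V(H_f)\cap\overline{\Omega}$ by your discs; this is repaired by the same compactness argument you already use.
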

\begin{proof} Direct consequence of Propositions \ref{Prop:NonsingPerturbationcurve} and \ref{Prop:H,E1,E2underPerturbations}.
\end{proof}

\begin{lemma}\label{q_tenV^k_tNonsing,sik_tneq0}
	Let $q$ be a point in $V(E_{1,f})\cap V(E_{2,f})$ and let $k=H_{f}(q)$. If $k\neq 0$, then $q$ is a singular point of the level set curve $H^{-1}_{f}{(k)}=\{(x,y)\in\R^2; H_{f}(x,y)=k\}$ defined by the Hessian of $f$.
\end{lemma}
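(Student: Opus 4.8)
The plan is to compute the gradient of $H_f$ at $q$ and show it vanishes. Recall from Definition \ref{Ei's} that
\begin{equation*}
\begin{pmatrix} E_{1,f} \\ E_{2,f}\end{pmatrix}
= \begin{pmatrix} f_{xx} & f_{xy} \\ f_{yx} & f_{yy}\end{pmatrix}
\begin{pmatrix} -(H_f)_y \\ (H_f)_x \end{pmatrix},
\end{equation*}
so the hypothesis $q \in V(E_{1,f}) \cap V(E_{2,f})$ says exactly that the vector $v := (-(H_f)_y(q), (H_f)_x(q))$ lies in the kernel of the Hessian matrix $Hess(f)(q)$. First I would observe that $\det Hess(f)(q) = H_f(q) = k \neq 0$, so $Hess(f)(q)$ is invertible; an invertible matrix has trivial kernel, hence $v = (0,0)$, i.e. $(H_f)_x(q) = (H_f)_y(q) = 0$. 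This is precisely the condition that $q$ is a critical point of the function $H_f$, and a critical point of $H_f$ lying on the level set $H_f^{-1}(k)$ is by definition a singular point of that level curve. That completes the argument.

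The main (and essentially only) point to be careful about is the logical direction: the vanishing of $E_{1,f}(q)$ and $E_{2,f}(q)$ together is equivalent to $v \in \ker Hess(f)(q)$, and it is the non-degeneracy $k \neq 0$ that forces $v = 0$ — this is where the hypothesis $k \neq 0$ is used, and it is indispensable (if $k = 0$ the matrix is singular and $v$ need not vanish, which is exactly the situation of genuine special parabolic points on the Hessian curve). I would also note for clarity that "$q$ is a singular point of $H_f^{-1}(k)$" means $\nabla H_f(q) = (0,0)$ while $H_f(q) = k$, so nothing further is needed once $v = 0$ is established. There is no real obstacle here; the statement is a short linear-algebra observation, and the proof is a two-line deduction from Definition \ref{Ei's} together with invertibility of a $2 \times 2$ matrix of nonzero determinant.
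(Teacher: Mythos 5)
Your proof is correct and follows essentially the same route as the paper's: both reduce the hypothesis to the statement that $v=(-(H_f)_y(q),(H_f)_x(q))$ lies in $\ker Hess(f)(q)$, and then use $H_f(q)=k\neq 0$ to force $v=(0,0)$, hence a singular point of the level curve. The only cosmetic difference is that the paper argues by contradiction (if $v\neq 0$ then $\operatorname{rank}Hess(f)(q)\leq 1$, so $H_f(q)=0$), whereas you invoke invertibility of the matrix directly; these are the same observation.
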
 
\begin{proof}
	Take $q\in V(E_{1,{f}})\cap V(E_{2,{f}})$ and suppose that $q\in H^{-1}_{f}{(k)}$ with $k\neq 0$. Since \begin{equation*}
	\begin{pmatrix}
	0\\ 0
	\end{pmatrix}=\begin{pmatrix}
	E_{1,{f}}(q)\\ \, E_{2,{f}}(q)
	\end{pmatrix}=\begin{pmatrix}f_{xx}(q) & f_{xy}(q) \\f_{yx}(q) & f_{yy}(q)\end{pmatrix}\begin{pmatrix}
	-(H_{{f}})_y(q)\\ \; \; (H_{{f}})_x(q)
	\end{pmatrix},
	\end{equation*}
	the vector $\upsilon(q):=(-(H_{{f}})_y(q), (H_{{f}})_x(q))^{\mathrm{t}}\in \ker\begin{pmatrix}f_{xx}(q) & f_{xy}(q) \\f_{yx}(q) & f_{yy}(q)\end{pmatrix}\cap T_{q} H^{-1}_{f}{(k)}$, where $T_{q} H^{-1}_{f}{(k)}$ is the tangent space to $H^{-1}_{f}{(k)}$ at ${q}$. If $\upsilon(q)\neq (0,0)$, then 
	$\rank \begin{pmatrix}f_{xx}(q) & f_{xy}(q) \\f_{yx}(q) & f_{yy}(q)\end{pmatrix} \leq 1$ so $q\in V(H_{f})$ and we reach a contradiction. Hence $\upsilon(q)= (0,0)$, which implies that $q$ is a singular point of $H^{-1}_{f}{(k)}$.
\end{proof}	

Our next theorem allows us to relate the transversal special parabolic points in the graph of a function to those in the graph of any of its perturbations. 

\begin{thm}\label{Thm:TSPPdeFtclosetoTSPPdef}
	Let $f\in \R[x,y]$ be a polynomial of degree $d\geq 3$ with non-singular Hessian curve and let $F_t$ be a perturbation of $f$. %with non-singular Hessian curve $\{H_f=0\}$ and such that $\{E_{1,{f}}=0\}\pitchfork\{E_{2,{f}}=0\}$. 
		Then, for every $(q,f(q))\in TSPP(f)$ and $\forall \varepsilon>0$, there exists $\delta>0$ such that for $|t|<\delta$ there is a point $(q_t,F_t(q_t))\!\in TSPP(F_t)$ with $q_t$ inside the closed disc $D(q,\varepsilon)\!\subset \R^2$ of radious $\varepsilon$.
	%the set $SPP(F_t)\cap B_{\varepsilon}(p)$ inside the ball of radious $\varepsilon$ centered at $p$ is non-empty for $0< t <\delta$. 
\end{thm}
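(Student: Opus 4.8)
The plan is to combine the perturbation results of Section~\ref{sec:Deftheory} with Propositions~\ref{PPE=H,E1,E2=0} and~\ref{Prop:H,E1,E2underPerturbations}, using the \emph{transversality} of the special parabolic point to bypass the obstruction illustrated in Example~\ref{exampleTrintersection}. Fix $(q,f(q))\in TSPP(f)$. By definition $q\in V(H_f)\cap V(E_{1,f})\cap V(E_{2,f})$, the curve $V(H_f)$ is non-singular at $q$, and $V(E_{1,f})\pitchfork_{\,q}V(E_{2,f})$. The first step is to apply Proposition~\ref{Prop:H,E1,E2underPerturbations}: the curves $V(E_{1,F_t})$, $V(E_{2,F_t})$, $V(H_{F_t})$ are perturbations of $V(E_{1,f})$, $V(E_{2,f})$, $V(H_f)$ respectively.

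The second step is to produce the candidate point. Since $V(E_{1,f})\pitchfork_{\,q}V(E_{2,f})$ and these two curves perturb, Proposition~\ref{Prop:transpreservedunderperturb} gives, for any prescribed $r>0$, a $\delta_1>0$ such that for $|t|<\delta_1$ there is a point $q_t\in D(q,r)$ with $V(E_{1,F_t})\pitchfork_{\,q_t}V(E_{2,F_t})$ and $\lim_{t\to0}q_t=q$; in particular $q_t\in V(E_{1,F_t})\cap V(E_{2,F_t})$. By the transversality definition, $V(E_{1,F_t})$ and $V(E_{2,F_t})$ are automatically non-singular at $q_t$, so the ``transversal'' part of the conclusion is handed to us directly. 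Shrinking $r\le\varepsilon$ we get $q_t\in D(q,\varepsilon)$.

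The crux is the third step: showing $q_t\in V(H_{F_t})$, i.e.\ that the third curve really passes through the intersection point of the first two, and that $V(H_{F_t})$ is non-singular at $q_t$ so that $(q_t,F_t(q_t))\in TSPP(F_t)$ in the strict sense of the definition. Non-singularity of $V(H_{F_t})$ near $q$ follows from Corollary~\ref{HFt non-sing} applied on a small disc around $q$ (using that $V(H_f)$ is non-singular, which is a global hypothesis here), after possibly shrinking $\delta$. For the membership $q_t\in V(H_{F_t})$: set $k_t:=H_{F_t}(q_t)$. Since $q_t\in V(E_{1,F_t})\cap V(E_{2,F_t})$, Lemma~\ref{q_tenV^k_tNonsing,sik_tneq0} (applied to $F_t$) says that if $k_t\neq0$ then $q_t$ is a \emph{singular} point of the level curve $H_{F_t}^{-1}(k_t)$. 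But $H_{F_t}^{-1}(k_t)$ differs from $V(H_{F_t})$ only by the constant shift of the value, so it has the same gradient field; hence $q_t$ singular on $H_{F_t}^{-1}(k_t)$ would force $\nabla H_{F_t}(q_t)=(0,0)$. Now I argue by continuity: $\nabla H_{F_t}$ converges uniformly on $D(q,\varepsilon)$ to $\nabla H_f$ (Proposition~\ref{Prop:LimdePerturbacionenptslimite} applied to the entries, or directly from the polynomial formula $H_{F_t}=H_f+t\widetilde h_t$), and $\nabla H_f(q)\neq(0,0)$ since $V(H_f)$ is non-singular at $q$; combined with $q_t\to q$, this yields $\nabla H_{F_t}(q_t)\neq(0,0)$ for $|t|$ small. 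Therefore $q_t$ cannot be a singular point of $H_{F_t}^{-1}(k_t)$, so by Lemma~\ref{q_tenV^k_tNonsing,sik_tneq0} we must have $k_t=0$, i.e.\ $q_t\in V(H_{F_t})$. Applying Proposition~\ref{PPE=H,E1,E2=0} to $F_t$ (legitimate since $V(H_{F_t})$ is non-singular at $q_t$) we conclude $(q_t,F_t(q_t))\in SPP(F_t)$, and with the transversality of $V(E_{1,F_t}),V(E_{2,F_t})$ at $q_t$ established in step two, $(q_t,F_t(q_t))\in TSPP(F_t)$. Taking $\delta:=\min\{\delta_1,\dots\}$ over the finitely many smallness conditions finishes the argument.

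The main obstacle is precisely step three: a priori the three curves $V(H_{F_t}),V(E_{1,F_t}),V(E_{2,F_t})$ need not have a common point after perturbation (Example~\ref{exampleTrintersection}), and the only leverage is that the original configuration is not a generic triple intersection but one governed by Lemma~\ref{q_tenV^k_tNonsing,sik_tneq0} --- the vanishing of $E_{1,f}$ and $E_{2,f}$ at a point of $V(H_f)$ is \emph{equivalent} to that point lying on $V(H_f)$ once the Hessian gradient is nonzero there. I expect the delicate point to be carefully phrasing the continuity/uniform-convergence argument that keeps $\nabla H_{F_t}(q_t)$ bounded away from zero, so that the dichotomy in Lemma~\ref{q_tenV^k_tNonsing,sik_tneq0} forces $H_{F_t}(q_t)=0$ rather than the singular alternative.
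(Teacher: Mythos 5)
Your proposal is correct and follows essentially the same route as the paper: produce $q_t$ as a transversal intersection point of $V(E_{1,F_t})$ and $V(E_{2,F_t})$ via Proposition~\ref{Prop:transpreservedunderperturb}, then rule out $H_{F_t}(q_t)\neq 0$ by combining Lemma~\ref{q_tenV^k_tNonsing,sik_tneq0} with the convergence $\nabla H_{F_t}(q_t)\to\nabla H_f(q)\neq(0,0)$ (the paper invokes Proposition~\ref{Prop:LimdePerturbacionenptslimite} for exactly this limit). Your phrasing of the dichotomy in step three is, if anything, a cleaner statement of the contradiction the paper writes out.
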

\begin{proof}
	Let $p=(q,f(q))\in \R^3$ be a transversal \spp in the \graf of $f$ and let $\pi:\R^3\rightarrow \R^2$ be the projection on the $xy$-plane. 
	
	Since $p$ is a transversal special parabolic point, then $V(H_f)$ is non-singular at $q$. By Corollary \ref{HFt non-sing}, the Hessian curve of $F_t$ is non-singular inside $D(q,\varepsilon)$ for small values of $t$, and by Proposition \ref{PPE=H,E1,E2=0} there exists $\delta_1>0$ so that, for $|t|<\delta_1$, 
	\begin{equation*}
		\pi\left(TSPP(F_t)\right)\cap D(q,\varepsilon)=V(H_{F_t})\cap V(E_{1,{F_t}})\cap V(E_{2,{F_t}})\cap D(q,\varepsilon).
	\end{equation*}

	By Proposition \ref{Prop:transpreservedunderperturb}, there exists $\delta_2>0$ so that for $|t|<\delta_2$ the curves $V(E_{1,F_t})$ and $V(E_{2,F_t})$ intersect transversally at some $q_t\in D(q,\varepsilon)$ with $\displaystyle \lim_{t\rightarrow 0}q_t=q$. %; and such that the tubular neighbourhoods 
	
	We claim that $(q_t,F_t(q_t))\in TSPP(F_t)$. To prove our claim it is enough to show that $q_t\in V(H_{F_t})$. Suppose that $H_{F_t}(q)=k_t\neq 0$, by Lemma \ref{q_tenV^k_tNonsing,sik_tneq0}, $q_t\in D(q,\varepsilon)$ is a singular point of $V^{k_t}:=\{(x,y)\in\R^2; H_{F_t}(x,y)=k_t\}$. The vector $\upsilon(q_t):=(-(H_{{F_t}})_y(q_t), (H_{{F_t}})_x(q_t))=(0,0)$, defines then a sequence with
	\begin{equation*}
	(0,0)\neq (-(H_{f})_y(q), (H_{f})_x(q))=\lim_{t\rightarrow 0}(-(H_{f})_y(q_t), (H_{f})_x(q_t))\stackrel{Prop.\; \ref{Prop:LimdePerturbacionenptslimite}}{=}\lim_{t\rightarrow 0}\upsilon(q_t)=(0,0).
	\end{equation*} 
	Therefore for $|t|< min\{\delta_1,\delta_2\}$, the point $q_t\in V(E_{1,{F_t}})\cap V(E_{2,{F_t}})$ lies also in $V(H_{F_t})\cap D(q,\varepsilon)$ and $(q_t,F_t(q_t))\in SPP(F_t)$ is, henceforth, a transversal \spp in the \graf of $F_t$ with $q_t\in D(q,\varepsilon)$.	
\end{proof}

\begin{cor}\label{Cor:InclusionTSPPdefenTSPPdeFt}
	Let $f\in \R[x,y]$ be a polynomial of degree $d\geq 3$ with non-singular Hessian curve and let $F_t\in \R[t][x, y]$ be a perturbation of $f$. Then, for $\varepsilon >0$ there exists $\delta>0$ such that for $0<|t|<\delta$ there exists an inclusion	
	\begin{equation}%\label{ecat0}
		\psi_t: TSPP(f)\hookrightarrow TSPP(F_t)
	\end{equation}
	such that $\pi(\psi_t(p))\in D(\pi(p),\varepsilon)$. Furthermore, choosing $\varepsilon$ small enough, we also have
	\begin{equation}%\label{ecat1}
		\psi_t: TSPP(f)^{*}\hookrightarrow TSPP(F_t)^{*}.
	\end{equation}
\end{cor}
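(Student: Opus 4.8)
The plan is to deduce Corollary \ref{Cor:InclusionTSPPdefenTSPPdeFt} from Theorem \ref{Thm:TSPPdeFtclosetoTSPPdef} by a straightforward packaging argument, being careful only about two points: the mutual disjointness of the small discs around distinct points of $TSPP(f)$, and the behaviour near the coordinate axes in the starred version. First I would observe that $TSPP(f)$ is a finite set: since $f$ has degree $d\geq 3$ and non-singular Hessian curve, the points of $TSPP(f)$ project to the points of $V(H_f)\cap V(E_{1,f})\cap V(E_{2,f})$ by Proposition \ref{PPE=H,E1,E2=0}, and at a transversal special parabolic point $V(E_{1,f})\pitchfork V(E_{2,f})$, so these are isolated points of a bounded algebraic set, hence finitely many. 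Enumerate $TSPP(f)=\{p_1,\dots,p_N\}$ with $q_k=\pi(p_k)$, and choose $\varepsilon_0>0$ small enough that the closed discs $D(q_1,\varepsilon_0),\dots,D(q_N,\varepsilon_0)$ are pairwise disjoint; for a prescribed $\varepsilon>0$ replace $\varepsilon$ by $\min\{\varepsilon,\varepsilon_0\}$ so that this disjointness also holds at the working radius.

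Next I would apply Theorem \ref{Thm:TSPPdeFtclosetoTSPPdef} to each $p_k$ with this common radius $\varepsilon$: it yields $\delta_k>0$ such that for $|t|<\delta_k$ there is a point $\psi_t(p_k):=(q_{k,t},F_t(q_{k,t}))\in TSPP(F_t)$ with $q_{k,t}\in D(q_k,\varepsilon)$. Set $\delta:=\min_{1\leq k\leq N}\delta_k$. For $0<|t|<\delta$ this defines a map $\psi_t:TSPP(f)\to TSPP(F_t)$, $p_k\mapsto \psi_t(p_k)$, with $\pi(\psi_t(p_k))\in D(q_k,\varepsilon)=D(\pi(p_k),\varepsilon)$. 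Injectivity is immediate: if $k\neq l$ then $q_{k,t}\in D(q_k,\varepsilon)$ and $q_{l,t}\in D(q_l,\varepsilon)$ lie in disjoint discs, so $\psi_t(p_k)\neq\psi_t(p_l)$. This gives the first inclusion.

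For the starred version, recall $TSPP(f)^{*}$ consists of those $p_k$ with $q_k\in{\R^{*}}^{2}$, i.e. $q_k$ has both coordinates non-zero; write $c:=\min\{|x_k|,|y_k| : q_k=(x_k,y_k)\in{\R^*}^2,\ p_k\in TSPP(f)^{*}\}>0$ (the minimum over a finite non-empty set; if $TSPP(f)^{*}$ is empty there is nothing to prove). Shrink $\varepsilon$ further so that $\varepsilon<c$; then for any $p_k\in TSPP(f)^{*}$ the disc $D(q_k,\varepsilon)$ is contained in ${\R^*}^2$, hence $\pi(\psi_t(p_k))=q_{k,t}\in{\R^*}^2$ and therefore $\psi_t(p_k)\in TSPP(F_t)^{*}$. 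Restricting the already-injective map $\psi_t$ to $TSPP(f)^{*}$ gives the second inclusion. I do not anticipate a genuine obstacle here; the only thing demanding care is that all the ``there exists $\delta$'' statements be assembled with a single radius $\varepsilon$ chosen in advance (small enough for both disc-disjointness and axis-avoidance) before invoking Theorem \ref{Thm:TSPPdeFtclosetoTSPPdef}, so that finiteness of $TSPP(f)$ lets us take the minimum of finitely many $\delta_k$'s — which is why establishing that $TSPP(f)$ is finite is the one substantive preliminary step.
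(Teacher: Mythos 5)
Your proposal is correct and takes essentially the same route as the paper: finiteness of $TSPP(f)$, a single radius chosen in advance so that the discs around the projections are pairwise disjoint (and, for the starred version, contained in $({\R^{*}})^{2}$), then Theorem \ref{Thm:TSPPdeFtclosetoTSPPdef} applied pointwise with $\delta$ the minimum of the finitely many $\delta_q$'s. Your explicit justification of finiteness and your restriction of the axis-avoidance condition to the points of $TSPP(f)^{*}$ only are minor refinements of details the paper states without elaboration.
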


\begin{proof}
	The set $TSPP(f)$ is finite. Let $0<\varepsilon_{1}< \varepsilon$ be such that for any $q, q^{\prime}\in \pi \left(TSPP(f)\right)$ with $q\neq q^{\prime}$, $D(q,\varepsilon_{1})\cap D(q^{\prime},\varepsilon_{1})=\emptyset$ and $D(q,\varepsilon_1), D(q^{\prime},\varepsilon_1)\subset (\R^*)^2$. By Theorem  \ref{Thm:TSPPdeFtclosetoTSPPdef}, for any $q\in \pi(TSPP(f))$, there exists $\delta_q>0$ such that, for $|t|<\delta_q$, there is a point $\left(q_t, F_t( q_t)\right)\in TSPP(F_t)$ with $q_t\in D(q,\varepsilon_1)$. 
	
	Let $\displaystyle \delta:=\!\!\!\min_{q\in \pi \left(TSPP(f)\right)}\!\!\!\!\!\delta_q>0$ and for $p=(q,f(q))\in TSPP(f)$ define $\psi_t(p):=\left(q_t, F_t( q_t)\right)$. %Let $\delta>0$ be as in Theorem \ref{Thm:TSPPdeFtclosetoTSPPdef}, choose $q_t\in\R^2$ to be an element of 	$\pi(TSPP(F_t))\cap D(q,\varepsilon)$ and define $\psi_t(p):=\left(q_t, F_t( q_t)\right)$. Moreover, $\delta$ can be chosen such that for any $p^{\prime}=(q^{\prime},f(q^{\prime}))\in TSPP(f)$ with $p\neq p^{\prime}$, $D(q,\varepsilon_{1})\cap D(q^{\prime},\varepsilon_{1})=\emptyset$ with $0<\varepsilon_{1}< \varepsilon$. Thus we have inclusion (\ref{ecat0}).
	%There exists $\varepsilon$ such that $D(q,\varepsilon)\subset (\R^*)^2$ for any $q\in \pi(TSPP(F_{t}))$. If we choose $q_t\in(\R^*)^2$ to be an element of $\pi(TSPP(F_t)^*)\cap D(q,\varepsilon)$ and define $\psi_t(p):=\left(q_t, F_t( q_t)\right)$, $\delta$ can be chosen such that for any $p^{\prime}=(q^{\prime},f(q^{\prime}))\in TSPP(f)^*$ with $p\neq p^{\prime}$, $D(q,\varepsilon_{1})\cap D(q^{\prime},\varepsilon_{1})=\emptyset$, with $0<\varepsilon_{1}< \varepsilon$, and $\varepsilon_1$ such that $D(q,\varepsilon_1)\subset (\R^*)^2$. This way we have inclusion (\ref{ecat1}).
\end{proof}

\section{On quasihomothetic maps}\label{sec:Sppsofhomotheticpols0}

In this section, we show some properties of a special type of transformations, called \textit{quasihomotheties} by O. Y. Viro \cite{ViroIntrotoTopRealAlgVars}. Quasihomotheties are maps
\begin{align*}%\label{rho}
\rho_{s}^{(\alpha,\beta)}:\R^2 & \to \R^2 \\\nonumber %
(x,y)& \mapsto (s^{\alpha}x,s^{\beta}y).
\end{align*}
for some $\alpha, \beta\in \Z$ and $s\in \R$. For $s\neq 0$ the function $\rho_{s}^{(\alpha,\beta)}$ is one-to-one, and the differential $d\rho_{s}^{(\alpha,\beta)}|_{(x,y)}$ corresponds to the isomorphism defined by the matrix 
	$
	\begin{pmatrix}
		s^{\alpha} &0\\
	0& s^{\beta}
	\end{pmatrix}$. 

\begin{lemma}\label{DifHomothetieskeeppointsaway} %Esta prop. cambi� el 7 de Abril de 2016: $\rho->\alpha_2, \mu->\alpha_1$ y cambios menores. Mas cambios el 20 de Abril 2017.
		Let $h_{s^{\alpha_i}} : \R \to \R$, $x\mapsto s^{\alpha_i}x$. %for $i=1, 2$ be defined for $\alpha_1, \alpha_2\in\Z$ with 
		Given $\alpha_1 < \alpha_2$, for any two intervals $[a,b],[c,d]\subset\R\setminus \{0\}$,	
		%Let $x_1, x_2\in \R\setminus \{0\}$. Given $\varepsilon>0$ with $\varepsilon<\min\{|x_1|, |x_2|\}$
		there exists $\delta>0$ such that for $0<|s|<\delta$ we have 
		$h_{s^{\alpha_1}}([a,b])\cap h_{s^{\alpha_2}}([c,d])= \emptyset$.
		%%Let $x_1, x_2\in \R\setminus \{0\}$. Given $\varepsilon>0$ with $\varepsilon<\min\{|x_1|, |x_2|\}$ 	and $\alpha_1, \alpha_2\in\Z$ with $\alpha_1 < \alpha_2$, there exists $\delta>0$ such that for $0<|s|<\delta$ we have $(s^{\alpha_1}(x_1-\varepsilon), s^{\alpha_1}(x_1+\varepsilon))\cap (s^{\alpha_2}(x_2-\varepsilon), s^{\alpha_2}(x_2+\varepsilon))= \emptyset$.
		% with $|s^{\alpha_2}(x_2\pm\varepsilon)|<|s^{\alpha_1}(x_1\pm\varepsilon)|$.
\end{lemma}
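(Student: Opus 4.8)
The plan is to reduce the claim to a comparison of absolute values and a single elementary estimate.

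First I would observe that since each of $[a,b]$ and $[c,d]$ is a connected subset of $\R\setminus\{0\}$, it lies entirely in $(0,\infty)$ or entirely in $(-\infty,0)$; consequently, for $s\neq 0$ the image $h_{s^{\alpha_1}}([a,b])$ is again an interval of constant sign, and the absolute values of its points fill the interval $[\,|s|^{\alpha_1}m_1,\ |s|^{\alpha_1}M_1\,]$, where $m_1:=\min_{x\in[a,b]}|x|>0$ and $M_1:=\max_{x\in[a,b]}|x|$; likewise $h_{s^{\alpha_2}}([c,d])$ has points with absolute values filling $[\,|s|^{\alpha_2}m_2,\ |s|^{\alpha_2}M_2\,]$, with $m_2,M_2$ defined analogously. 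In particular, a point common to $h_{s^{\alpha_1}}([a,b])$ and $h_{s^{\alpha_2}}([c,d])$ would have an absolute value lying in both of these magnitude intervals, so it suffices to arrange that they are disjoint.

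Next I would use that $\alpha_1<\alpha_2$ are integers, hence $\alpha_2-\alpha_1\geq 1$. For $0<|s|<1$ this gives $|s|^{\alpha_1}/|s|^{\alpha_2}=|s|^{-(\alpha_2-\alpha_1)}\geq|s|^{-1}>1$, a ratio that tends to $\infty$ as $s\to 0$; so the magnitudes of $h_{s^{\alpha_1}}([a,b])$ stay far from $0$ while those of $h_{s^{\alpha_2}}([c,d])$ shrink toward $0$. Quantitatively, set
\[
\delta:=\min\Bigl\{\,1,\ (m_1/M_2)^{1/(\alpha_2-\alpha_1)}\,\Bigr\}>0 .
\]
For $0<|s|<\delta$, strict monotonicity of $u\mapsto u^{\alpha_2-\alpha_1}$ on $[0,\infty)$ gives $|s|^{\alpha_2-\alpha_1}<m_1/M_2$, and multiplying by $|s|^{\alpha_1}M_2>0$ yields $|s|^{\alpha_2}M_2<|s|^{\alpha_1}m_1$.

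Finally I would conclude: for $0<|s|<\delta$, every point of $h_{s^{\alpha_2}}([c,d])$ has absolute value at most $|s|^{\alpha_2}M_2$, which is strictly smaller than $|s|^{\alpha_1}m_1$, the minimum absolute value attained on $h_{s^{\alpha_1}}([a,b])$; hence no real number can belong to both sets, and $h_{s^{\alpha_1}}([a,b])\cap h_{s^{\alpha_2}}([c,d])=\emptyset$. I do not anticipate any genuine obstacle, as the argument is elementary; the only point deserving care is that $\alpha_1,\alpha_2$ may be negative, which is handled by working throughout with $|s|$ and using only $\alpha_2-\alpha_1\geq 1$. (Equivalently, one can note that the two images meet if and only if $s^{\alpha_2-\alpha_1}$ lies in the compact set $\{x/y:\ x\in[a,b],\ y\in[c,d]\}\subset\R\setminus\{0\}$, and $s^{\alpha_2-\alpha_1}\to 0$ as $s\to 0$.)
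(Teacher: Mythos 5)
Your proof is correct, and it takes a genuinely cleaner route than the paper's. The paper argues by cases: it splits according to whether $[a,b]$ and $[c,d]$ lie in $\R_{>0}$ or $\R_{<0}$, and then further according to the sign of $s$ and the parities of $\alpha_1,\alpha_2$ (since $s^{\alpha}$ for $s<0$ may reverse orientation or flip the sign of the image interval), producing four candidate thresholds $\delta_1,\dots,\delta_4$ and taking their minimum. You collapse all of this into a single observation: a point common to the two images would have its absolute value in both magnitude ranges $[\,|s|^{\alpha_1}m_1,|s|^{\alpha_1}M_1\,]$ and $[\,|s|^{\alpha_2}m_2,|s|^{\alpha_2}M_2\,]$, and the inequality $|s|^{\alpha_2}M_2<|s|^{\alpha_1}m_1$, guaranteed by your choice of $\delta$, forces these ranges apart. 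Note that in the all-positive case your threshold $(m_1/M_2)^{1/(\alpha_2-\alpha_1)}$ is exactly the paper's $\delta_1=(a/d)^{1/(\alpha_2-\alpha_1)}$, so quantitatively the two proofs agree; what your version buys is the elimination of the sign/parity bookkeeping (and your closing remark, that the images meet iff $s^{\alpha_2-\alpha_1}$ lands in the compact set $\{x/y\}\subset\R\setminus\{0\}$ while $s^{\alpha_2-\alpha_1}\to 0$, is a valid even shorter abstract argument). The only thing the paper's case analysis offers in exchange is explicit information about where each image interval sits on the line, which is not needed for the conclusion. One minor point: you invoke $\alpha_2-\alpha_1\geq 1$ from integrality, but your argument only needs $\alpha_2-\alpha_1>0$ together with integrality of the $\alpha_i$ to make $s^{\alpha_i}$ meaningful for $s<0$; this is consistent with the paper's intended hypotheses.
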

\begin{proof}
	We will give the proof in the case where $[a,b],[c,d]\subset\R_{>0}$.
	Consider $\delta_{1}=(\frac{a}{d})^{\frac{1}{\alpha_2-\alpha_1}}$. 
	If $s>0$ or $\alpha_1, \alpha_2$ are even, then $h_{s^{\alpha_1}}([a,b])=[s^{\alpha_1}a,s^{\alpha_1}b]$ and $h_{s^{\alpha_2}}([c,d])= [s^{\alpha_2}c,s^{\alpha_2}d]$, thus choosing $0<|s|<\delta_{1}$ gives $s^{\alpha_2-\alpha_1}<\frac{a}{d}$, so $s^{\alpha_2}d<s^{\alpha_1}a$ and $h_{s^{\alpha_1}}([a,b])\cap h_{s^{\alpha_2}}([c,d])= \emptyset$. If $s<0$ and $\alpha_1+\alpha_2$ is odd, then either $h_{s^{\alpha_1}}([a,b])\subset \R_{>0}$ and $h_{s^{\alpha_2}}([c,d])\subset \R_{<0}$; or $h_{s^{\alpha_1}}([a,b])\subset \R_{<0}$ and $h_{s^{\alpha_2}}([c,d])\subset \R_{>0}$, so the result follows. If $s<0$ and $\alpha_1, \alpha_2$ are odd, then $h_{s^{\alpha_1}}([a,b])=[s^{\alpha_1}b,s^{\alpha_1}a]$ and $h_{s^{\alpha_2}}([c,d])= [s^{\alpha_2}d,s^{\alpha_2}c]$, hence $|s|<\delta_{1}$ gives $s>-(\frac{a}{d})^{\frac{1}{\alpha_2-\alpha_1}}$, thus $0<-s<(\frac{a}{d})^{\frac{1}{\alpha_2-\alpha_1}}$, %so $s^{\alpha_2-\alpha_1}<\frac{a}{d}$ 
	and $s^{\alpha_1}a<s^{\alpha_2}d$, so $h_{s^{\alpha_1}}([a,b])\cap h_{s^{\alpha_2}}([c,d])= \emptyset$. 
	%	\vspace{-0.5cm}
	%	\begin{itemize}\setlength\itemsep{-0.3em}
	%		\item [i)] $[a,b],[c,d]\subset\R_{>0} \setminus \{0\}$
	%		\item [ii)]$[a,b]\subset\R_{>0} \setminus \{0\}$ and $[c,d]\subset\R_{<0} \setminus \{0\}$
	%		\item [iii)] $[a,b]\subset\R_{<0} \setminus \{0\}$ and $[c,d]\subset\R_{>0} \setminus \{0\}$
	%		\item [iv)] $[a,b],[c,d]\subset\R_{<0} \setminus \{0\}$.
	%	\end{itemize} 
	%\vspace{-0.4cm}

	%In case iii), if $s<0$ with $\alpha_1$ odd and $\alpha_2$ even, then $s^{\alpha_2-\alpha_1}>\frac{b}{d}$ or $(-s)^{\alpha_2-\alpha_1}<\frac{-b}{d}$	 $s^{\alpha_2}d<s^{\alpha_1}b$, so $h_{\alpha_1}([a,b])\cap h_{\alpha_2}([c,d])= [s^{\alpha_1}b,s^{\alpha-1}a]\cap [s^{\alpha_2}c,s^{\alpha_2}d]=\emptyset$. 
	
	For the cases where $[a,b]\subset\R_{>0}$ and $[c,d]\subset\R_{<0}$; $[a,b]\subset\R_{<0}$ and $[c,d]\subset\R_{>0}$; or $[a,b],[c,d]\subset\R_{<0}$, it is enough to consider $0<|s|$ smaller than $\delta_{\rm{2}}=|\frac{a}{c}|^{\frac{1}{\alpha_2-\alpha_1}}$, $\delta_{\rm{3}}=|\frac{b}{d}|^{\frac{1}{\alpha_2-\alpha_1}}$, $\delta_{\rm{4}}=(\frac{b}{c})^{\frac{1}{\alpha_2-\alpha_1}}$ to have $h_{s^{\alpha_1}}([a,b])\cap h_{s^{\alpha_2}}([c,d])= \emptyset$, respectively. Taking $\delta:=\min\{\delta_{\rm{1}}, \delta_{\rm{2}}, \delta_{\rm{3}}, \delta_{\rm{4}}\}$, the result follows.
\end{proof}

\begin{prop}\label{Prop:phitkeepspointsawayV2}
	Given $(\alpha,\beta)\neq (\alpha^{\prime},\beta^{\prime})\in \Z^{2}$, let $A, B\subset (\R^{*})^2$ be finite sets of points and let $\varepsilon>0$ be such that for all $q\in A\cup B$, the closed disc $D(q,\varepsilon)\subset (\R^*)^2$. Then, there exists $\delta >0$ such that for $0<|s|<\delta$, %and for all $r,r^{\prime}\in \Z,$
	\begin{equation*}
		 \displaystyle \rho_{s} ^{(\alpha,\beta)}\left(Tub_{\varepsilon}(A)\right)\bigcap \rho_{s} ^{(\alpha^{\prime},\beta^{\prime})}\left(Tub_{\varepsilon}(B)\right) =\emptyset.
	\end{equation*}
\end{prop}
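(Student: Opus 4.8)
The plan is to reduce the two-dimensional statement to the one-dimensional Lemma~\ref{DifHomothetieskeeppointsaway} by projecting onto a suitable coordinate axis. Since $(\alpha,\beta)\neq(\alpha^{\prime},\beta^{\prime})$, at least one of $\alpha\neq\alpha^{\prime}$ or $\beta\neq\beta^{\prime}$ holds; the asserted conclusion is unchanged if we simultaneously swap $A\leftrightarrow B$ and $(\alpha,\beta)\leftrightarrow(\alpha^{\prime},\beta^{\prime})$, so I may assume $\alpha<\alpha^{\prime}$, and in the remaining case $\alpha=\alpha^{\prime}$, $\beta<\beta^{\prime}$ I run the identical argument with the two coordinates exchanged. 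Write $\mathrm{pr}_1:\R^2\to\R$, $(x,y)\mapsto x$, and $h_c:\R\to\R$, $x\mapsto cx$, so that $\mathrm{pr}_1\circ\rho_s^{(\alpha,\beta)}=h_{s^{\alpha}}\circ\mathrm{pr}_1$.

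Next I describe the images of the tubular neighbourhoods under $\mathrm{pr}_1$. Because $A$ is finite, $\Tub_{\varepsilon}(A)=\bigcup_{q\in A}D(q,\varepsilon)$, hence $\mathrm{pr}_1(\Tub_{\varepsilon}(A))=\bigcup_{q=(q_1,q_2)\in A}[q_1-\varepsilon,q_1+\varepsilon]$ is a finite union of closed intervals; the hypothesis $D(q,\varepsilon)\subset(\R^{*})^2$ forces $|q_1|>\varepsilon$, so each of these intervals lies in $\R\setminus\{0\}$. The same applies to $B$. Thus $\mathrm{pr}_1(\Tub_{\varepsilon}(A))=\bigcup_{k=1}^{m}I_k$ and $\mathrm{pr}_1(\Tub_{\varepsilon}(B))=\bigcup_{l=1}^{n}J_l$ with all $I_k,J_l\subset\R\setminus\{0\}$ closed intervals.

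For the core step, apply Lemma~\ref{DifHomothetieskeeppointsaway} to each of the finitely many pairs $(I_k,J_l)$, with $\alpha_1=\alpha<\alpha^{\prime}=\alpha_2$: this yields $\delta_{k,l}>0$ such that $h_{s^{\alpha}}(I_k)\cap h_{s^{\alpha^{\prime}}}(J_l)=\emptyset$ for $0<|s|<\delta_{k,l}$. Set $\delta:=\min_{k,l}\delta_{k,l}>0$, a minimum of finitely many positive numbers. Then for $0<|s|<\delta$,
\begin{equation*}
 h_{s^{\alpha}}\!\bigl(\mathrm{pr}_1(\Tub_{\varepsilon}(A))\bigr)\cap h_{s^{\alpha^{\prime}}}\!\bigl(\mathrm{pr}_1(\Tub_{\varepsilon}(B))\bigr)=\bigcup_{k,l}\bigl(h_{s^{\alpha}}(I_k)\cap h_{s^{\alpha^{\prime}}}(J_l)\bigr)=\emptyset .
\end{equation*}
If some point lay in $\rho_s^{(\alpha,\beta)}(\Tub_{\varepsilon}(A))\cap\rho_s^{(\alpha^{\prime},\beta^{\prime})}(\Tub_{\varepsilon}(B))$, then applying $\mathrm{pr}_1$ and using $\mathrm{pr}_1\circ\rho_s^{(\alpha,\beta)}=h_{s^{\alpha}}\circ\mathrm{pr}_1$ would place its first coordinate in the set just shown to be empty, a contradiction; hence the intersection is empty.

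I do not expect a serious obstacle here: the two points that need care are that the hypothesis $D(q,\varepsilon)\subset(\R^{*})^2$ is precisely what keeps the projected intervals away from $0$ so that Lemma~\ref{DifHomothetieskeeppointsaway} is applicable, and that the finiteness of $A$ and $B$ is what makes $\delta=\min_{k,l}\delta_{k,l}$ a minimum over finitely many indices, hence strictly positive. The case $\alpha=\alpha^{\prime}$, $\beta\neq\beta^{\prime}$ is verbatim the same argument with $\mathrm{pr}_2:(x,y)\mapsto y$ in place of $\mathrm{pr}_1$ and $\beta,\beta^{\prime}$ in place of $\alpha,\alpha^{\prime}$.
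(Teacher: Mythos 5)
Your proof is correct and follows essentially the same route as the paper: project onto the coordinate where the exponents differ, observe that the hypothesis $D(q,\varepsilon)\subset(\R^{*})^2$ keeps the projected intervals away from $0$, and apply Lemma \ref{DifHomothetieskeeppointsaway} pairwise to finitely many intervals, taking the minimum of the resulting $\delta$'s. The only cosmetic difference is that the paper covers each projection by just two intervals (one in $\R_{<0}$, one in $\R_{>0}$) while you use one interval per point of $A$ or $B$; both work equally well.
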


\begin{proof}
 %$A:=\displaystyle \bigcup_{q=(x,y)\in C} \left\lbrace(s^{\alpha}x,s^{\beta}y)| (x,y)\in D(q,\varepsilon)\right\rbrace$ and $B:=\bigcup_{q=(x,y)\in C^{\prime}} \left\lbrace(s^{\alpha^{\prime}}x,s^{\beta^{\prime}}y)| (x,y)\in D(q,\varepsilon)\right\rbrace. $
Suppose that $\alpha \neq \alpha^{\prime}$ and let $\pi_1: \R^2 \to \R$ be the projection $(x,y)\mapsto x$. Let $[a,b],[e,f]\subset \R_{<0}$, and $[c,d],[g,h]\subset \R_{>0}$ be intervals such that 
\begin{equation*}
	\pi_{1}(Tub_{\varepsilon}(A))\subseteq [a,b]\cup [c,d] \mbox{ and } %and 
	\pi_{1}(Tub_{\varepsilon}(B))\subseteq [e,f]\cup [g,h].
\end{equation*} 

Then, \vspace{-0.4cm}
\begin{equation*}
\pi_{1}\left( \rho_{s}^{(\alpha,\beta)}(Tub_{\varepsilon}(A))\right)=\left\lbrace s^{\alpha}x; x\in \pi_{1}\left(Tub_{\varepsilon}(A)\right)\right\rbrace \subseteq h_{s^{\alpha}}([a,b])\cup h_{s^{\alpha}}([c,d]) \;\mbox{and}
\end{equation*}
\begin{equation*}
\pi_{1}\left(\rho_{s}^{(\alpha^{\prime},\beta^{\prime})}(Tub_{\varepsilon}(B)) \right)=\left\lbrace s^{\alpha^{\prime}}x; x\in \pi_{1}(Tub_{\varepsilon}(B))\right\rbrace \subseteq h_{s^{\alpha}}([e,f])\cup h_{s^{\alpha}}([g,h]).
\end{equation*}
%\begin{equation*}
%	\pi_{1}\left( \rho_{s}^{(\alpha,\beta)}(Tub_{\varepsilon}(A))\right)=\left\lbrace s^{\alpha}x; x\in \pi_{1}\left(Tub_{\varepsilon}(A)\right)\right\rbrace \subseteq \left\lbrace s^{\alpha}x| x\in [a,b]\right\rbrace\cup \left\lbrace s^{\alpha}x| x\in [c,d]\right\rbrace \,\mbox{and}
%\end{equation*}
%\begin{equation*}
%	\pi_{1}\left(\rho_{s}^{(\alpha^{\prime},\beta^{\prime})}(Tub_{\varepsilon}(B)) \right)=\left\lbrace s^{\alpha^{\prime}}x; x\in \pi_{1}(Tub_{\varepsilon}(B))\right\rbrace \subseteq \left\lbrace s^{\alpha^{\prime}}x | x\in [e,f]\right\rbrace\cup \left\lbrace s^{\alpha^{\prime}} x| x\in [g,h]\right\rbrace\!,
%\end{equation*}
%thus, $\pi_{1}\left(\rho_{s} ^{(\alpha,\beta)}(Tub_{\varepsilon}(A))\right)\subseteq h_{s^{\alpha}}([a,b])\cup h_{s^{\alpha}}([c,d])$ and $\pi_{1}(\rho_{s} ^{(\alpha^{\prime},\beta^{\prime})}(Tub_{\varepsilon}(B)) )\subseteq h_{s^{\alpha}}([e,f])\cup h_{s^{\alpha}}([g,h])$. 
Taking these intervals two-to-two, the result follows from Lemma \ref{DifHomothetieskeeppointsaway}. 
 
If, on the other hand, $\alpha = \alpha^{\prime}$ and $\beta \neq \beta^{\prime}$, then with the projection $\pi_2: \R^2 \to \R, (x,y)\mapsto y$ and a similar process we obtain the result wanted.
\end{proof}

%We also show how special parabolic points behave under quasihomotethies.% of a given function.

\section{Special parabolic points under quasihomothetic maps}\label{sec:Sppsofhomotheticpols}

In this section we show how the number of transversal special parabolic points is preserved under quasihomotheties. 

Given $\alpha,\beta, r\in \Z$, we will consider the transformation
\begin{align}\label{homotenx,y}
\widetilde{h}_{(\alpha,\beta,r)} :\R[x,y] & \to \R[s][x,y]\\\nonumber
f(x,y)& \mapsto \widetilde{h}_{(\alpha,\beta,r)}(f)(x,y)=s^r f \circ\rho_{s} ^{(\alpha,\beta)}(x,y).
\end{align} Note that $\widetilde{h}_{(\alpha,\beta,r)}(f)$ does not define a perturbation of $f\in \R[x,y]$. However, if we consider the translation $T_1\!:\R[s][x,y] \to \R[s][x,y]$, $f(x,y,s)\mapsto f(x,y,s+1)$, then the composition $T_1\circ \widetilde{h}_{(\alpha,\beta,r)}(f)$ is a perturbation of the polynomial $f$.	

\begin{lemma} \label{Lem:SPPsigualesbajosr}
	Let $\Omega \subset \R^{2}$ be a bounded region. If the Hessian curve of $f\in \R[x,y]$ is non-singular inside the closure of $\Omega$, then for $s\neq 0$ and $r\in \Z$,
	 \begin{enumerate}{\footnotesize
		\item[i)]$(x,y,f(x,y))\in SPP(f)\cap \pi^{-1}(\Omega)$ \ssi $(x,y,\widetilde{h}_{(0,0,r)}(f)(x,y))\in SPP(\widetilde{h}_{(0,0,r)}(f))\cap \pi^{-1}(\Omega)$.
 		\item[ii)] $(x,y,f(x,y))\in SPP(f)^*\cap \pi^{-1}(\Omega)$ \ssi $(x,y,\widetilde{h}_{(0,0,r)}(f)(x,y))\in SPP(\widetilde{h}_{(0,0,r)}(f))^*\cap \pi^{-1}(\Omega)$.}
	\end{enumerate}
\end{lemma}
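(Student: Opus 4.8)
The key observation is that $\widetilde{h}_{(0,0,r)}(f)(x,y) = s^r f(\rho_s^{(0,0)}(x,y)) = s^r f(x,y)$, so up to the nonzero scalar $s^r$ the map $\widetilde{h}_{(0,0,r)}$ is just multiplication of $f$ by a constant (in $x,y$). The plan is therefore to reduce both statements to the elementary fact that the defining data for special parabolic points scales homogeneously under $f \mapsto cf$ for a nonzero constant $c = s^r$. Concretely, all second partials scale as $(cf)_{\bullet\bullet} = c\, f_{\bullet\bullet}$, so the Hessian satisfies $H_{cf} = c^2 H_f$, hence $V(H_{cf}) = V(H_f)$ and $V(H_{cf})$ is non-singular exactly where $V(H_f)$ is; in particular non-singularity of the Hessian curve inside $\overline{\Omega}$ is preserved. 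Likewise $(H_{cf})_x = c^2 (H_f)_x$ and $(H_{cf})_y = c^2 (H_f)_y$, so the column vector $(-(H_{cf})_y, (H_{cf})_x)$ equals $c^2$ times $(-(H_f)_y, (H_f)_x)$, and therefore $E_{1,cf} = c^3 E_{1,f}$, $E_{2,cf} = c^3 E_{2,f}$ by Definition \ref{Ei's}. Since $c = s^r \neq 0$, this gives $V(E_{1,cf}) = V(E_{1,f})$ and $V(E_{2,cf}) = V(E_{2,f})$ as sets.

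For part i), I would argue as follows. Restricting to the region $\pi^{-1}(\Omega)$, the Hessian curve of $f$ is non-singular inside $\overline{\Omega}$ by hypothesis, and hence so is that of $\widetilde{h}_{(0,0,r)}(f)$ by the scaling above; thus Proposition \ref{PPE=H,E1,E2=0} applies to both $f$ and $\widetilde{h}_{(0,0,r)}(f)$ at every point of $\Omega$. By that proposition, $(x,y,f(x,y)) \in SPP(f) \cap \pi^{-1}(\Omega)$ if and only if $(x,y) \in V(H_f) \cap V(E_{1,f}) \cap V(E_{2,f}) \cap \Omega$, and similarly $(x,y,\widetilde{h}_{(0,0,r)}(f)(x,y)) \in SPP(\widetilde{h}_{(0,0,r)}(f)) \cap \pi^{-1}(\Omega)$ if and only if $(x,y) \in V(H_{cf}) \cap V(E_{1,cf}) \cap V(E_{2,cf}) \cap \Omega$. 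The two intersection conditions coincide by the set equalities $V(H_{cf}) = V(H_f)$, $V(E_{i,cf}) = V(E_{i,f})$ established above, which proves i).

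Part ii) follows immediately: the asterisk decoration restricts to points lying over $({\R^*})^2$, and whether $(x,y)$ lies over $({\R^*})^2$ depends only on $(x,y)$, not on which of $f$ or $\widetilde{h}_{(0,0,r)}(f)$ we are considering — both have the same $xy$-projection $\pi(x,y,\cdot) = (x,y)$. So intersecting both characterizations from i) with $\pi^{-1}(({\R^*})^2)$ preserves the equivalence. I do not anticipate a genuine obstacle here; the only point requiring mild care is making sure Proposition \ref{PPE=H,E1,E2=0} is legitimately applicable on all of $\Omega$ (which is why the non-singularity hypothesis on the closure of $\Omega$ is used, via the scaling $H_{cf} = c^2 H_f$), and keeping track that $c = s^r$ is genuinely nonzero so that scaling by $c^2$ and $c^3$ does not alter the zero sets.
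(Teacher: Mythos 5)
Your proposal is correct and follows essentially the same route as the paper: both identify $\widetilde{h}_{(0,0,r)}(f)=s^r f$, compute the homogeneous scalings $H_{s^r f}=s^{2r}H_f$ and $E_{i,s^r f}=s^{3r}E_{i,f}$, and then apply Proposition \ref{PPE=H,E1,E2=0} to both functions to identify the two $SPP$ sets (the starred version following because the $xy$-projection is unchanged). No gaps.
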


\begin{proof} 
	Set $\widetilde{f}(x,y):=\widetilde{h}_{(0,0,r)}(f)(x,y)=s^rf(x,y)$, and let $\pi:\R^3\rightarrow \R^2$ be the projection on the $xy$-plane, by Proposition \ref{PPE=H,E1,E2=0} the set of special parabolic points in the \graf of $\widetilde{f}$ is given by the set 
	$SPP(\widetilde{f})\cap \pi^{-1}(\Omega)=\{(x,y,\widetilde{f}(x,y));(x,y)\in V(H_{\widetilde{f}})\cap V(E_{1,\widetilde{f}})\cap V(E_{2,\widetilde{f}})\}\cap \pi^{-1}(\Omega)$, 
	where \vspace{-0.2cm}
	\begin{align}\label{e1e2h0} 
		H_{\widetilde{f}}(x,y)\, =&\det \!\begin{pmatrix}\widetilde{f}_{xx}(x,y) & \widetilde{f}_{xy}(x,y) \\\widetilde{f}_{yx}(x,y) & \widetilde{f}_{yy}(x,y)\end{pmatrix}\!\!=\!\det \!\begin{pmatrix}s^r {f}_{xx}(x,y) & s^r {f}_{xy}(x,y) \\s^r {f}_{yx}(x,y) & s^r {f}_{yy}(x,y)\end{pmatrix}\!=\!s^{2r} H_{f}(x,y), \,\mbox{and}\\\nonumber\hspace{-0.5cm}
 		\begin{pmatrix}
 		\!E_{1,{\widetilde{f}}}(x,y)\\ \! E_{2,{\widetilde{f}}}(x,y)
 		\end{pmatrix} \!=&\begin{pmatrix}\widetilde{f}_{xx}(x,y) & \widetilde{f}_{xy}(x,y) \\\widetilde{f}_{yx}(x,y) & \widetilde{f}_{yy}(x,y)\end{pmatrix}\begin{pmatrix}
 		-(H_{{\widetilde{f}}})_y(x,y)\\ \; \; (H_{{\widetilde{f}}})_x(x,y)
 		\end{pmatrix}\\\label{e1e2h}
 		=&\begin{pmatrix}s^r {f}_{xx}(x,y) & s^r {f}_{xy}(x,y) \\s^r {f}_{yx}(x,y) & s^r {f}_{yy}(x,y)\end{pmatrix}\begin{pmatrix}
 		-s^{2r}(H_{{f}})_y(x,y)\\ \; \; s^{2r}(H_{f})_x(x,y)
 		\end{pmatrix}=s^{3r}\begin{pmatrix}
 		E_{1,f}(x,y)\\ \, E_{2,f}(x,y)
 		\end{pmatrix}.
	\end{align}
	
	Since the Hessian curve of $f$ is non-singular inside $\Omega$, by equation (\ref{e1e2h0}) also the Hessian curve of $\widetilde{f}$ is non-singular inside $\Omega$. Since we have $SPP(f)\cap \pi^{-1}(\Omega)=\{(x,y,f(x,y));(x,y)\in V(H_{f})\cap V(E_{1,f})\cap V(E_{2,f})\}\cap \pi^{-1}(\Omega)$, equations (\ref{e1e2h0}) and (\ref{e1e2h}) give the results wanted for $s \neq 0$.% $(x ,y ,f(x,y)) \in SSP(f)$ if and only if $(x,y,\widetilde{h}_{(0,0,r)}(f)(x,y)) \in SSP(\widetilde{h}_{(0,0,r)}(f))$. And, $(x ,y ,f(x,y)) \in SSP(f)^*$ if and only if $(x,y,\widetilde{h}_{(0,0,r)}(f)(x,y)) \in SSP(\widetilde{h}_{(0,0,r)}(f))^*$% is a special parabolic point in the graph of $f$ if and only if it is a special parabolic point in the \graf of $\,\widetilde{h}_{(0,0,r)}(f)$.
\end{proof}

\begin{lemma}\label{Lem:SPPsigualesbajosrt}
	Let $\Omega \subset \R^{2}$ be a bounded region. If the Hessian curve of $f\in \R[x,y]$ is non-singular inside the closure of $\Omega$, then for $s\neq 0$ and $r\in \Z$,
	\begin{enumerate}{\footnotesize
		\item[i)] $\!\!(x,y,f(x,y))\!\in\! TSPP(f)\cap \pi^{-1}(\Omega)$ \ssi $(x,y,\widetilde{h}_{(0,0,r)}(f)(x,y))\!\in\! TSPP(\widetilde{h}_{(0,0,r)}(f))\cap \pi^{-1}(\Omega)$.
 		\item[ii)] $\!\!(x,y,f(x,y))\!\in\! TSPP(f)^*\cap \pi^{-1}(\Omega)$ \ssi $(x,y,\widetilde{h}_{(0,0,r)}(f)(x,y))\!\in\! TSPP(\widetilde{h}_{(0,0,r)}(f))^*\cap \pi^{-1}(\Omega)$.}
	\end{enumerate}
\end{lemma}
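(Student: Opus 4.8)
The plan is to reduce the statement about transversal special parabolic points to the previous Lemma \ref{Lem:SPPsigualesbajosr} about ordinary special parabolic points, by observing that the extra hypothesis defining a \emph{transversal} special parabolic point, namely the non-singularity of $V(H_f)$ at $q$ together with the transversality $V(E_{1,f})\pitchfork_q V(E_{2,f})$, is unaffected by multiplying everything by a nonzero power of $s$. First I would invoke Lemma \ref{Lem:SPPsigualesbajosr} to get the equivalence $(x,y,f(x,y))\in SPP(f)\cap\pi^{-1}(\Omega)$ iff $(x,y,\widetilde{f}(x,y))\in SPP(\widetilde f)\cap\pi^{-1}(\Omega)$, where $\widetilde f=\widetilde h_{(0,0,r)}(f)=s^r f$, so it only remains to check that the transversality conditions match up on both sides.

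Next I would use the scaling relations established in the proof of Lemma \ref{Lem:SPPsigualesbajosr}, namely $H_{\widetilde f}=s^{2r}H_f$ and $(E_{1,\widetilde f},E_{2,\widetilde f})^{\mathrm t}=s^{3r}(E_{1,f},E_{2,f})^{\mathrm t}$ from equations \eqref{e1e2h0} and \eqref{e1e2h}. Since $s\neq 0$, these are equalities up to a nonzero constant, so $V(H_{\widetilde f})=V(H_f)$, $V(E_{1,\widetilde f})=V(E_{1,f})$ and $V(E_{2,\widetilde f})=V(E_{2,f})$ as sets, and moreover the differentials satisfy $dH_{\widetilde f}|_q=s^{2r}\,dH_f|_q$ and $dE_{i,\widetilde f}|_q=s^{3r}\,dE_{i,f}|_q$ for $i=1,2$. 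Multiplication by a nonzero scalar preserves both the rank of a differential and the kernel of a differential, hence $V(H_{\widetilde f})$ is non-singular at $q$ iff $V(H_f)$ is, and $\ker dE_{i,\widetilde f}|_q=\ker dE_{i,f}|_q$, so by \eqref{TangSpace} the point $q$ is a transversal special parabolic point for $\widetilde f$ iff it is one for $f$. Combining this with the $SPP$-equivalence from Lemma \ref{Lem:SPPsigualesbajosr} gives part i); part ii) follows verbatim since the condition of lying in $\pi^{-1}((\R^*)^2)$ depends only on $(x,y)$, which is fixed under $\widetilde h_{(0,0,r)}$.

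The step requiring the most care — though it is still routine — is verifying that the differential identities $dH_{\widetilde f}|_q=s^{2r}\,dH_f|_q$ and $dE_{i,\widetilde f}|_q=s^{3r}\,dE_{i,f}|_q$ hold pointwise, so that the transversality hypothesis of Definition is faithfully transported; this is immediate from the functional identities $H_{\widetilde f}=s^{2r}H_f$ and $E_{i,\widetilde f}=s^{3r}E_{i,f}$ (with $s$ a constant, not a variable of differentiation) by applying $d$. Everything else is bookkeeping.

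\begin{proof}
	Set $\widetilde{f}(x,y):=\widetilde{h}_{(0,0,r)}(f)(x,y)=s^rf(x,y)$ with $s\neq 0$. By Lemma \ref{Lem:SPPsigualesbajosr}, the Hessian curve of $\widetilde f$ is non-singular inside $\Omega$, and we have the equivalences
	\begin{equation*}
		(x,y,f(x,y))\in SPP(f)\cap \pi^{-1}(\Omega) \iff (x,y,\widetilde{f}(x,y))\in SPP(\widetilde{f})\cap \pi^{-1}(\Omega),
	\end{equation*}
	together with the corresponding equivalence for the starred sets. By equations \eqref{e1e2h0} and \eqref{e1e2h} we have, on all of $\R^2$,
	\begin{equation*}
		H_{\widetilde{f}}=s^{2r}H_{f}, \qquad \begin{pmatrix} E_{1,{\widetilde{f}}}\\ E_{2,{\widetilde{f}}}\end{pmatrix}=s^{3r}\begin{pmatrix} E_{1,f}\\ E_{2,f}\end{pmatrix}.
	\end{equation*}
	Since $s\neq 0$, these give $V(H_{\widetilde f})=V(H_f)$, $V(E_{1,\widetilde f})=V(E_{1,f})$ and $V(E_{2,\widetilde f})=V(E_{2,f})$ as subsets of $\R^2$. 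Applying the differential $d$ (the variable $s$ being a nonzero constant) yields, at every point $q\in\R^2$,
	\begin{equation*}
		dH_{\widetilde{f}}|_{q}=s^{2r}\,dH_{f}|_{q}, \qquad dE_{i,\widetilde{f}}|_{q}=s^{3r}\,dE_{i,f}|_{q}\quad (i=1,2).
	\end{equation*}
	Multiplication by the nonzero scalar $s^{2r}$ preserves the rank of a linear map, so $V(H_{\widetilde f})$ is non-singular at a point $q$ if and only if $V(H_f)$ is non-singular at $q$. Likewise, multiplication by the nonzero scalar $s^{3r}$ preserves the kernel of a linear map, so $\ker dE_{i,\widetilde f}|_{q}=\ker dE_{i,f}|_{q}$ for $i=1,2$; hence by \eqref{TangSpace},
	\begin{equation*}
		\R^2\simeq \ker dE_{1,\widetilde f}|_{q}+\ker dE_{2,\widetilde f}|_{q} \iff \R^2\simeq \ker dE_{1,f}|_{q}+\ker dE_{2,f}|_{q},
	\end{equation*}
	that is, $q$ is a transversal special parabolic point of $\widetilde f$ if and only if it is one of $f$. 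Combining this with the $SPP$-equivalence above proves i). For ii), note that the condition $(x,y)\in(\R^*)^2$ is invariant since $\widetilde{h}_{(0,0,r)}$ does not change the point $(x,y)$; intersecting both equivalences with $\pi^{-1}((\R^*)^2)$ gives the claim for the starred sets.
\end{proof}
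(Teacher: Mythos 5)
Your proof is correct and follows essentially the same route as the paper: both reduce to Lemma \ref{Lem:SPPsigualesbajosr} and then use the scaling identities \eqref{e1e2h0} and \eqref{e1e2h} to see that the transversality condition \eqref{TangSpace} is preserved, since the differentials $dE_{i,\widetilde f}$ differ from $dE_{i,f}$ by the nonzero scalar $s^{3r}$ and hence have the same kernels. Your write-up simply makes explicit the pointwise differential identities and the preservation of non-singularity of the Hessian curve, which the paper leaves implicit.
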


\begin{proof} A smooth point $p=(q,f(q))\in \R^3$ is a transversal \spp in the \graf of $f$ if it satisfies equation (\ref{TangSpace}). By equations (\ref{e1e2h0}) and (\ref{e1e2h}) we have that $\R^2\simeq \ker dE_{1,\widetilde{f}}|_{q}+\ker dE_{2,\widetilde{f}}|_{q}$, and by Lemma \ref{Lem:SPPsigualesbajosr} we have the result.
\end{proof}

Set $\widetilde{\varphi}^{(\alpha,\beta,r)}_s$ to be the transformation given by
\begin{align}\label{eq:2}
%\hspace{-135pt}
\widetilde{\varphi}^{(\alpha,\beta,r)}_s: \Gamma_{f} &\to \Gamma_{\widetilde{h}_{(\alpha,\beta,r)}(f)}\\
\left(x,y,f(x,y)\right) &\mapsto \left(s^{-\alpha}x,s^{-\beta}y,\widetilde{h}_{(\alpha,\beta,r)}(f)\left(\rho_{s} ^{(-\alpha,-\beta)}(x,y)\right)\right)\!. \nonumber
\end{align} 

\begin{prop}\label{Prop:BiyecciondeSPPsbajohomotecia}
Let $\Omega \subset \R^{2}$ be a bounded region. If the Hessian curve of $f \in \R[x,y]$ is non-singular inside the closure of $\Omega$, then for $s\neq 0$, and $\alpha,\beta, r\in \Z$, the mapping $\widetilde{\varphi}^{(\alpha,\beta,r)}_s$ gives a one-to-one correspondence between $SPP(f)\cap \pi^{-1}(\Omega)$
 and $SPP(\widetilde{h}_{(\alpha,\beta,r)}(f))\cap \pi^{-1}(\rho_{s}^{(-\alpha,-\beta)}(\Omega))$. Moreover, $\widetilde{\varphi}^{(\alpha,\beta,r)}_s$ gives a one-to-one correspondence between $SPP(f)^*\cap \pi^{-1}(\Omega)$
 and the set $SPP(\widetilde{h}_{(\alpha,\beta,r)}(f))^*\cap \pi^{-1}(\rho_{s}^{(-\alpha,-\beta)}(\Omega))$.
\end{prop}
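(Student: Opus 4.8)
The plan is to reduce Proposition~\ref{Prop:BiyecciondeSPPsbajohomotecia} to the case $\alpha=\beta=0$ already handled in Lemma~\ref{Lem:SPPsigualesbajosr}, by factoring the transformation $\widetilde h_{(\alpha,\beta,r)}$ through a pure rescaling of coefficients followed by an honest change of coordinates. Concretely, write $\widetilde h_{(\alpha,\beta,r)}(f) = \widetilde h_{(0,0,r)}(\,f\circ\rho_s^{(\alpha,\beta)}\,)$: the outer map only multiplies by $s^r$, while the inner substitution $g:=f\circ\rho_s^{(\alpha,\beta)}$ is, for $s\neq 0$, a diffeomorphism of $\R^2$ in the source variables. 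So the proof splits into two independent claims: (a) composing with the linear-diagonal diffeomorphism $\rho_s^{(\alpha,\beta)}$ carries special parabolic points bijectively onto special parabolic points, transporting the base point $q$ to $\rho_s^{(-\alpha,-\beta)}(q)$; and (b) multiplying the resulting polynomial by the nonzero scalar $s^r$ changes nothing, which is exactly Lemma~\ref{Lem:SPPsigualesbajosr} applied to $g$ on the region $\rho_s^{(-\alpha,-\beta)}(\Omega)$ (noting $g$ has non-singular Hessian curve there iff $f$ does on $\Omega$, by (a)).

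For claim~(a) I would verify directly that the Hessian and the Arnold polynomials $E_{1},E_{2}$ transform covariantly under the diagonal substitution. If $g(x,y)=f(s^{\alpha}x,s^{\beta}y)$, then the Hessian matrix of $g$ at $(x,y)$ is $M^{\mathrm t}\,\mathrm{Hess}(f)(\rho_s^{(\alpha,\beta)}(x,y))\,M$ with $M=\mathrm{diag}(s^{\alpha},s^{\beta})$, so $H_g(x,y)=s^{2\alpha+2\beta}H_f(\rho_s^{(\alpha,\beta)}(x,y))$; in particular $V(H_g)=\rho_s^{(-\alpha,-\beta)}(V(H_f))$ and non-singularity is preserved since $\rho_s^{(\alpha,\beta)}$ is a diffeomorphism. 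Chasing the chain rule through the gradient of $H_g$ and through the matrix product defining $(E_{1,g},E_{2,g})^{\mathrm t}$ shows that this vector is a nonzero-scalar multiple (a power of $s$ times $M$) of $(E_{1,f},E_{2,f})^{\mathrm t}$ evaluated at $\rho_s^{(\alpha,\beta)}(x,y)$; hence $V(E_{1,g})\cap V(E_{2,g})=\rho_s^{(-\alpha,-\beta)}\!\big(V(E_{1,f})\cap V(E_{2,f})\big)$. By Proposition~\ref{PPE=H,E1,E2=0} this gives the bijection of special parabolic points, and the explicit formula for $\widetilde\varphi^{(\alpha,\beta,r)}_s$ in \eqref{eq:2} is precisely the graph-level lift of $q\mapsto\rho_s^{(-\alpha,-\beta)}(q)$, so it restricts to the claimed correspondence. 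The starred versions follow because $\rho_s^{(-\alpha,-\beta)}$ maps $(\R^*)^2$ onto $(\R^*)^2$ for $s\neq 0$, so it respects the condition $\pi^{-1}((\R^*)^2)$.

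Assembling: given $p=(q,f(q))\in SPP(f)\cap\pi^{-1}(\Omega)$, set $q'=\rho_s^{(-\alpha,-\beta)}(q)$; by~(a), $(q',g(q'))\in SPP(g)\cap\pi^{-1}(\rho_s^{(-\alpha,-\beta)}(\Omega))$, and by Lemma~\ref{Lem:SPPsigualesbajosr} applied to $g$ this is equivalent to $(q',\widetilde h_{(0,0,r)}(g)(q'))=(q',\widetilde h_{(\alpha,\beta,r)}(f)(q'))\in SPP(\widetilde h_{(\alpha,\beta,r)}(f))\cap\pi^{-1}(\rho_s^{(-\alpha,-\beta)}(\Omega))$, which is $\widetilde\varphi^{(\alpha,\beta,r)}_s(p)$. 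All arrows are reversible (every step used a diffeomorphism or a nonzero scalar), giving the one-to-one correspondence; the starred statement is the same argument carried through $SPP(\cdot)^*$. The only point requiring genuine care, and hence the main obstacle, is bookkeeping the chain-rule computation in~(a) to confirm that the $s$-powers and the matrix $M$ enter $(E_{1,g},E_{2,g})$ only as an overall invertible linear factor so that the common zero locus is exactly transported by $\rho_s^{(-\alpha,-\beta)}$; everything else is formal.
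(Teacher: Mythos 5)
Your proposal is correct and follows essentially the same route as the paper: reduce to the case $r=0$ via Lemma \ref{Lem:SPPsigualesbajosr}, then compute how $H$, $E_{1}$ and $E_{2}$ transform under the substitution $f\mapsto f\circ\rho_{s}^{(\alpha,\beta)}$ (your factor ``a power of $s$ times $M$'' matches the paper's $s^{4\alpha+3\beta}$, $s^{3\alpha+4\beta}$ exactly), and conclude with Proposition \ref{PPE=H,E1,E2=0}; the starred case is handled identically via $\rho_{s}^{(-\alpha,-\beta)}((\R^{*})^{2})=(\R^{*})^{2}$.
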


\begin{proof} 
	By Lemma \ref{Lem:SPPsigualesbajosr}, it is enough to prove that $\widetilde{\varphi}^{(\alpha,\beta,r)}_s$ gives a one-to-one correspondence for $r=0$. Set $\widehat{f}(x,y) := \widetilde{h}_{(\alpha,\beta,0)}(f)(x,y) = f(\rho_{s} ^{(\alpha,\beta)}(x,y))$. By Proposition \ref{PPE=H,E1,E2=0}, the set of special parabolic points in the \graf of $\widehat{f}$ is described by the set 
	\begin{equation*}
		SPP(\widehat{f})\cap \pi^{-1}(\Omega)=\{(x,y,\widehat{f}(x,y)); (x,y)\in V(H_{\widehat{f}})\cap V(E_{1,\widehat{f}})\cap V(E_{2,\widehat{f}})\}\cap \pi^{-1}(\Omega),
	\end{equation*}	where
	\begin{align}\nonumber
		H_{\widehat{f}}(x,y)\;& =\det \begin{pmatrix}\widehat{f}_{xx}(x,y) & \widehat{f}_{xy}(x,y) \\
		\widehat{f}_{yx}(x,y) & \widehat{f}_{yy}(x,y)\end{pmatrix} \!=
		\det \begin{pmatrix}s^{2\alpha} {f}_{xx}(\rho_{s} ^{(\alpha,\beta)}(x,y)) & s^{\alpha+\beta} {f}_{xy}(\rho_{s} ^{(\alpha,\beta)}(x,y)) \\
		s^{\alpha+\beta} {f}_{yx}(\rho_{s} ^{(\alpha,\beta)}(x,y)) & s^{2\beta} {f}_{yy}(\rho_{s} ^{(\alpha,\beta)}(x,y))\end{pmatrix}\\\label{eq:22} 
		& =s^{2(\alpha+\beta)} H_{{f}}(\rho_{s}^{(\alpha,\beta)}(x,y)),\; \mbox{and} \\
	%\end{align}and 	\begin{align}
	\nonumber
		\begin{pmatrix}
			E_{1,{\widehat{f}}}(x,y)\\ \, E_{2,{\widehat{f}}}(x,y)
			\end{pmatrix} \!&=\begin{pmatrix}\widehat{f}_{xx}(x,y) & \widehat{f}_{xy}(x,y) \\\widehat{f}_{yx}(x,y) & \widehat{f}_{yy}(x,y)\end{pmatrix}\begin{pmatrix} -(H_{{\widehat{f}}})_y(x,y)\\ \; \; (H_{{\widehat{f}}})_x(x,y)
		\end{pmatrix}\\\nonumber
		&=\begin{pmatrix}s^{2\alpha} {f}_{xx}(\rho_{s} ^{(\alpha,\beta)}(x,y)) & s^{\alpha+\beta} 								{f}_{xy}(\rho_{s}^{(\alpha,\beta)}(x,y)) \\s^{\alpha+\beta} {f}_{yx}(\rho_{s} ^{(\alpha,\beta)}(x,y)) & s^{2\beta} {f}_{yy}(\rho_{s} ^{(\alpha,\beta)}(x,y))\end{pmatrix}\begin{pmatrix}
		-s^{2(\alpha+\beta)} (H_{f})_y(s^{\alpha}x,s^{\beta}y)\\ \; \; s^{2(\alpha+\beta)} (H_{f})_x(s^{\alpha}x,s^{\beta}y)	\end{pmatrix} \\\label{eq:23} 
		& =\begin{pmatrix}
			s^{4\alpha+3\beta} E_{1,f}(\rho_{s} ^{(\alpha,\beta)}(x,y))\\ \, s^{3\alpha+4\beta} E_{2,f}(\rho_{s}^{(\alpha,\beta)}(x,y))
			\end{pmatrix}.
	\end{align}

	By equation \eqref{eq:22}, the Hessian curve $V(H_{\widehat{f}})$ of $\widehat{f}$ is non-singular inside $\Omega$ for $s\neq 0$; and by equations \eqref{eq:22} and \eqref{eq:23}, $\widetilde{\varphi}^{(\alpha,\beta,r)}_s$ is one-to-one between the sets $SPP(f)\cap \pi^{-1}(\Omega)$ and $SPP(\widetilde{h}_{(\alpha,\beta,r)}(f))\cap \pi^{-1}(\rho_{s}^{(-\alpha,-\beta)}(\Omega))$. Moreover, since for $s\neq 0$, $s^{\alpha}x=0$ \ssi $x\!=\!0$, and $s^{\beta}y=0$ \ssi $y\!=\!0$, the latest claim is proved.
\end{proof}

\begin{prop} \label{Prop:BiyecciondeTSPPsbajohomotecia}
	Let $\Omega \subset \R^{2}$ be a bounded region. If the Hessian curve of $f \in \R[x,y]$ is non-singular inside $\Omega$, then for $s\neq 0$ and $\alpha,\beta, r\in \Z$, $\widetilde{\varphi}^{(\alpha,\beta,r)}_s$ gives a one-to-one correspondence between $TSPP(f)\cap \pi^{-1}(\Omega)$
	and $TSPP(\widetilde{h}_{(\alpha,\beta,r)}(f))\cap \pi^{-1}(\rho_{s}^{(-\alpha,-\beta)}(\Omega))$. Moreover, $\widetilde{\varphi}^{(\alpha,\beta,r)}_s$ is a one-to-one correspondence between $TSPP(f)^*\cap \pi^{-1}(\Omega)$ and $TSPP(\widetilde{h}_{(\alpha,\beta,r)}(f))^*\cap \pi^{-1}(\rho_{s}^{(-\alpha,-\beta)}(\Omega))$.
\end{prop}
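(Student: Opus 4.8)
The plan is to reduce Proposition \ref{Prop:BiyecciondeTSPPsbajohomotecia} to Proposition \ref{Prop:BiyecciondeSPPsbajohomotecia} exactly as Lemma \ref{Lem:SPPsigualesbajosrt} was reduced to Lemma \ref{Lem:SPPsigualesbajosr}. By Proposition \ref{Prop:BiyecciondeSPPsbajohomotecia}, the map $\widetilde{\varphi}^{(\alpha,\beta,r)}_s$ already gives a one-to-one correspondence between $SPP(f)\cap \pi^{-1}(\Omega)$ and $SPP(\widetilde{h}_{(\alpha,\beta,r)}(f))\cap \pi^{-1}(\rho_{s}^{(-\alpha,-\beta)}(\Omega))$, and likewise for the starred sets. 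Since $TSPP(g)\subseteq SPP(g)$ for any polynomial $g$, it suffices to show that a point $p\in SPP(f)\cap\pi^{-1}(\Omega)$ is a \emph{transversal} special parabolic point of $f$ if and only if its image $\widetilde{\varphi}^{(\alpha,\beta,r)}_s(p)$ is a transversal special parabolic point of $\widetilde{h}_{(\alpha,\beta,r)}(f)$.

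First I would invoke Lemma \ref{Lem:SPPsigualesbajosrt} to dispose of the scaling parameter $r$: equations \eqref{e1e2h0} and \eqref{e1e2h} show that $\widetilde{h}_{(0,0,r)}$ merely multiplies $H$, $E_1$, $E_2$ by nonzero constants, so it changes neither the zero loci nor their tangent spaces, hence neither transversality condition \eqref{TangSpace}. Thus it is enough to treat the case $r=0$, i.e. the pure quasihomothety $\widehat{f}(x,y)=f(\rho_s^{(\alpha,\beta)}(x,y))$. Here I would use the chain-rule identities \eqref{eq:22} and \eqref{eq:23} from the proof of Proposition \ref{Prop:BiyecciondeSPPsbajohomotecia}, which express $E_{1,\widehat f}$ and $E_{2,\widehat f}$ as nonzero powers of $s$ times $E_{1,f}\circ\rho_s^{(\alpha,\beta)}$ and $E_{2,f}\circ\rho_s^{(\alpha,\beta)}$ respectively. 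Since $\rho_s^{(\alpha,\beta)}$ is for $s\neq0$ a linear isomorphism of $\R^2$, differentiating these identities at a point $q$ lying in $V(E_{1,f})\cap V(E_{2,f})$ gives
\begin{equation*}
dE_{i,\widehat f}|_{q'} = s^{m_i}\, dE_{i,f}|_{\rho_s^{(\alpha,\beta)}(q')}\circ d\rho_s^{(\alpha,\beta)},\qquad i=1,2,
\end{equation*}
where $q'=\rho_s^{(-\alpha,-\beta)}(q)$ and $m_1=4\alpha+3\beta$, $m_2=3\alpha+4\beta$. (At a zero of $E_{i,f}$ no lower-order correction terms from differentiating the $s$-power survive, since those terms carry a factor $E_{i,f}(\rho_s^{(\alpha,\beta)}(q'))=0$.)

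Then I would read off the transversality statement directly. Because $s\neq0$ and $d\rho_s^{(\alpha,\beta)}$ is invertible, $\ker dE_{i,\widehat f}|_{q'}=\big(d\rho_s^{(\alpha,\beta)}\big)^{-1}\big(\ker dE_{i,f}|_{q}\big)$ for $i=1,2$. Applying an isomorphism to a pair of subspaces preserves the property that their sum is all of $\R^2$, so
\begin{equation*}
\R^2=\ker dE_{1,f}|_{q}+\ker dE_{2,f}|_{q}\quad\Longleftrightarrow\quad \R^2=\ker dE_{1,\widehat f}|_{q'}+\ker dE_{2,\widehat f}|_{q'},
\end{equation*}
which is condition \eqref{TangSpace} for $\widehat f$ at $q'$. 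Non-singularity of the respective Hessian curves inside $\Omega$ and $\rho_s^{(-\alpha,-\beta)}(\Omega)$ is already guaranteed by \eqref{eq:22} (as noted in the proof of Proposition \ref{Prop:BiyecciondeSPPsbajohomotecia}). Combining this equivalence with the bijection on $SPP$-sets from Proposition \ref{Prop:BiyecciondeSPPsbajohomotecia} yields the desired bijection on $TSPP$-sets; restricting to points with nonzero coordinates — which $\rho_s^{(\alpha,\beta)}$ preserves for $s\neq0$ — gives the starred version. I do not expect any serious obstacle here: the only point requiring a line of care is checking that the correction terms arising when one differentiates the $s$-power prefactors in \eqref{eq:23} all vanish at $q$, which they do precisely because $q\in V(E_{1,f})\cap V(E_{2,f})$; everything else is transport of linear-algebraic data along the isomorphism $d\rho_s^{(\alpha,\beta)}$.
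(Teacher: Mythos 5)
Your proposal is correct and follows essentially the same route as the paper: reduce to $r=0$ via Lemma \ref{Lem:SPPsigualesbajosrt}, invoke Proposition \ref{Prop:BiyecciondeSPPsbajohomotecia} for the bijection on $SPP$-sets, and transport the kernels of $dE_{1,f}$ and $dE_{2,f}$ through the isomorphism $d\rho_s^{(\alpha,\beta)}$ to see that condition \eqref{TangSpace} is preserved. The only cosmetic remark is that your worry about ``correction terms'' is vacuous — the prefactors $s^{m_i}$ are constant in $(x,y)$, so the identity $dE_{i,\widehat f}|_{(x,y)}=s^{m_i}\,dE_{i,f}|_{\rho_s^{(\alpha,\beta)}(x,y)}\cdot d\rho_s^{(\alpha,\beta)}$ holds at every point, not only on $V(E_{1,f})\cap V(E_{2,f})$, exactly as the paper states it.
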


\begin{proof}
	By Corollary \ref{Lem:SPPsigualesbajosrt}, it is enough to prove the proposition for $r = 0$. Set $\widehat{f}(x,y) := \widetilde{h}_{(\alpha,\beta,0)}(f)(x,y) =f\circ \rho_s^{(\alpha,\beta)}(x,y)$. By Proposition \ref{Prop:BiyecciondeSPPsbajohomotecia}, it is enough to show that if $\R^2 \simeq\ker dE_{1,f}|_{(x,y)}+\ker dE_{2,f}|_{(x,y)}$, then there exists $\delta>0$ such that for $0<|s|<\delta$ we have $\R^2\simeq \ker dE_{1,\widehat{f}}|_{\rho_{s} ^{(-\alpha,-\beta)}(x,y)}+\ker dE_{2,\widehat{f}}|_{\rho_{s} ^{(-\alpha,-\beta)}(x,y)}$.	
	Since \begin{align*}
		dE_{1,\widehat{f}}|_{(x,y)}&= s^{4\alpha+3\beta} dE_{1,f}|_{\rho_{s}^{(\alpha,\beta)}(x,y)}\cdot
		\begin{pmatrix}
			s^{\alpha} &0\\
			0& s^{\beta}
		\end{pmatrix}\!, \,\mbox{and}\\
			dE_{2,\widehat{f}}|_{(x,y)}&= s^{3\alpha+4\beta} dE_{2,f}|_{\rho_{s}^{(\alpha,\beta)}(x,y)}\cdot
		\begin{pmatrix}
		s^{\alpha} &0\\
		0& s^{\beta}
		\end{pmatrix}\!,	
	\end{align*} 
%	we have that \vspace*{-0.7cm}
%	\begin{align*}
%		\ker dE_{1,\widehat{f}}|_{(x,y)}=\left\lbrace \begin{pmatrix}
%			u_1\\
%			v_1
%		\end{pmatrix}; s^{4\alpha+3\beta} \left[u_1s^{\alpha}d_xE_{1,f}|_{(s^{\alpha}x,s^{\beta}y)}+v_1s^{\beta}d_yE_{1,f}|_{(s^{\alpha}x,s^{\beta}y)} \right]=0 \right\rbrace,\;\mbox{and}\\
%			\ker dE_{2,\widehat{f}}|_{(x,y)}=\left\lbrace \begin{pmatrix}
%		u_2\\
%		v_2
%		\end{pmatrix}; s^{3\alpha+4\beta} \left[u_2s^{\alpha}d_{x}E_{2,f}|_{(s^{\alpha}x,s^{\beta}y)}+v_2s^{\beta}d_{y}E_{2,f}|_{(s^{\alpha}x,s^{\beta}y)} \right]=0 \right\rbrace. \hspace{25pt}	
%	\end{align*}
then $\ker dE_{i,\widehat{f}}|_{\rho_{s} ^{(-\alpha,-\beta)}(x,y)}$ is the image of $\ker dE_{i,f}|_{(x,y)}$ under the isomorphism defined by the matrix $d\rho_s^{(\alpha,\beta)}|_{(x,y)}=
	\begin{pmatrix}
		s^{\alpha} &0\\
	0& s^{\beta}
	\end{pmatrix}\!$, for $\,i=1,2$. 
	Therefore, if $\R^2 \simeq\ker dE_{1,f}|_{(x,y)}+\ker dE_{2,f}|_{(x,y)}$, then $ \ker dE_{1,\widehat{f}}|_{\rho_{s}^{(-\alpha,-\beta)}(x,y)}+\ker dE_{2,\widehat{f}}|_{\rho_{s} ^{(-\alpha,-\beta)}(x,y)}\simeq \R^2$.	
\end{proof}

Let $f\in \R[x,y]$ be a polynomial and let $f_t$ be a perturbation of $f$. For $t\in \R$, set $h_{(\alpha,\beta,r)}$ to be the transformation
	\begin{align*}
		h_{(\alpha,\beta,r)}:\R[t][x,y] &\to \R[t][x,y] \label{ecut2} \\
		f_t(x,y)&\mapsto h_{(\alpha,\beta,r)}(f_t)(x,y)=t^r f_{t}\circ \rho_{t} ^{(\alpha,\beta)}(x,y).
	\end{align*}
	Note that $h_{(\alpha,\beta,r)}$ can be obtained by extending the transformation $\widetilde{h}_{(\alpha,\beta,r)}$ in \eqref{homotenx,y} to polynomials $f_t\in \R[t][x,y]$ %to obtain $\widetilde{h}_{(\alpha,\beta,r)}(f_t)\in \R[s,t][x,y]$
	and composing with $\varrho: \R[s,t][x,y] \to \R[t][x,y]$, $P(s,t,x,y)\mapsto P(t,t,x,y)$. 
%}
%\end{notat}
%A similar proof to that of Proposition \ref{Prop:BiyecciondeSPPsbajohomotecia} and \ref{Prop:BiyecciondeTSPPsbajohomotecia} give the following results.

Set $\varphi^{(\alpha,\beta,r)}_t$ to be the transformation given by
	\begin{align*}
			\varphi^{(\alpha,\beta,r)}_t: \Gamma_{f_t} & \to \Gamma_{h_{(\alpha,\beta,r)}(f_t)}\\\nonumber
			(x,y,f_{t}(x,y))& \mapsto \left(t^{ -\alpha}x,t^{ -\beta}y,h_{(\alpha,\beta,r)}(f_t)\circ \rho_{t}^{(-\alpha,-\beta)}(x,y)\right).
	\end{align*} Note that $\varphi^{(\alpha,\beta,r)}_t$ can be obtained by extending the transformation $\widetilde{\varphi}^{(\alpha,\beta,r)}_s$ in \eqref{eq:2} to polynomials $f_t\in \R[t][x,y]$ %to obtain $\widetilde{h}_{(\alpha,\beta,r)}(f_t)\in \R[s,t][x,y]$
	and composing with $\varrho: \R[s,t][x,y] \to \R[t][x,y]$, $P(s,t,x,y)\mapsto P(t,t,x,y)$. 

 \begin{prop} \label{Prop:BiyecciondeSPPsbajohomoteciadeft}
	Let $f\in \R[x,y]$ be a polynomial with non-singular Hessian curve and let $f_t\in \R[t][x,y]$ be a perturbation of $f$. Then, for any bounded region $\Omega \subset \R^{2}$ there exist $\delta>0$ such that for $0<|t|<\delta$, $\varphi^{(\alpha,\beta,r)}_t$ is a one-to-one correspondence between 
	%Let $\alpha,\beta, r\in \Z$, and $\Omega \subset \R^{2}$ a bounding region. If the Hessian curve of $f \in \R[x,y]$ is non-singular, and $f_t$ is a perturbation of $f$, then there exists $\delta>0$ such that for $0< |t|<\delta$, $\varphi^{(\alpha,\beta,r)}_t$ is a one-to-one correspondence between the following sets:
\begin{enumerate}
\item[i)] $SPP(f_t)\cap \pi^{-1}(\Omega)$ and $SPP(h_{(\alpha,\beta,r)}(f_t))\cap \pi^{-1}\left(\rho_{t}^{(-\alpha,-\beta)}(\Omega)\right)$,
\item[ii)] $SPP(f_t)^*\cap \pi^{-1}(\Omega)$ and $SPP(h_{(\alpha,\beta,r)}(f_t))^*\cap \pi^{-1}\left(\rho_{t}^{(-\alpha,-\beta)}(\Omega)\right)$,
\item[iii)] $TSPP(f_t)\cap \pi^{-1}(\Omega)$ and $TSPP(h_{(\alpha,\beta,r)}(f_t))\cap \pi^{-1}\left(\rho_{t}^{(-\alpha,-\beta)}(\Omega)\right)$, \,\mbox{and}
\item[iv)] $TSPP(f_t)^*\cap \pi^{-1}(\Omega)$ and $TSPP(h_{(\alpha,\beta,r)}(f_t))^*\cap \pi^{-1}\left(\rho_{t}^{(-\alpha,-\beta)}(\Omega)\right)$
\end{enumerate}
for any $(\alpha,\beta,r)\in \Z^{3}$.
\end{prop}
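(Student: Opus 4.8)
The plan is to reduce Proposition \ref{Prop:BiyecciondeSPPsbajohomoteciadeft} to the "frozen-parameter" statements already proved, namely Proposition \ref{Prop:BiyecciondeSPPsbajohomotecia} and Proposition \ref{Prop:BiyecciondeTSPPsbajohomotecia}, by treating $t$ as a constant once it is small enough. The only genuine difference between $\widetilde{h}_{(\alpha,\beta,r)}$ and $h_{(\alpha,\beta,r)}$ (respectively $\widetilde{\varphi}^{(\alpha,\beta,r)}_s$ and $\varphi^{(\alpha,\beta,r)}_t$) is the substitution $\varrho\colon s\mapsto t$ that identifies the homothety parameter with the perturbation parameter; so for each fixed $t\neq 0$ we are exactly in the situation of Section \ref{sec:Sppsofhomotheticpols} applied to the polynomial $f_t\in\R[x,y]$ with $s=t$. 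The one hypothesis we must recover is that the Hessian curve of $f_t$ is non-singular on (the closure of) the relevant region. First I would fix the bounded region $\Omega$; by Proposition \ref{Prop:BiyecciondeSPPsbajohomotecia} the region that appears on the target side is $\rho_t^{(-\alpha,-\beta)}(\Omega)$, and since $|t|$ will be bounded, the union $\bigcup_{0<|t|<1}\rho_t^{(-\alpha,-\beta)}(\Omega)$ together with $\Omega$ is contained in some fixed bounded region $\Omega'\subset\R^2$ whose closure we may enlarge slightly. Since $f$ has non-singular Hessian curve everywhere, in particular on $\overline{\Omega'}$, Corollary \ref{HFt non-sing} gives a $\delta>0$ so that for $|t|<\delta$ the Hessian curve $V(H_{f_t})$ is non-singular on $\Omega'$, hence on both $\Omega$ and $\rho_t^{(-\alpha,-\beta)}(\Omega)$.

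With that $\delta$ in hand, fix any $t$ with $0<|t|<\delta$. Apply Proposition \ref{Prop:BiyecciondeSPPsbajohomotecia} to the polynomial $f_t$, the region $\Omega$, the parameter value $s=t\neq 0$, and the exponents $(\alpha,\beta,r)$: this yields a one-to-one correspondence
\begin{equation*}
\widetilde{\varphi}^{(\alpha,\beta,r)}_{s=t}\colon SPP(f_t)\cap\pi^{-1}(\Omega)\;\longrightarrow\;SPP\bigl(\widetilde{h}_{(\alpha,\beta,r)}(f_t)\big|_{s=t}\bigr)\cap\pi^{-1}\bigl(\rho_t^{(-\alpha,-\beta)}(\Omega)\bigr),
\end{equation*}
and likewise for the starred sets. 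Now observe that $\widetilde{h}_{(\alpha,\beta,r)}(f_t)\big|_{s=t}=\varrho\bigl(\widetilde{h}_{(\alpha,\beta,r)}(f_t)\bigr)=h_{(\alpha,\beta,r)}(f_t)$ by the very definition of $h_{(\alpha,\beta,r)}$ recalled just before the statement, and similarly $\widetilde{\varphi}^{(\alpha,\beta,r)}_{s=t}=\varphi^{(\alpha,\beta,r)}_t$ after the same substitution. Substituting these identities into the displayed correspondence gives items i) and ii). For iii) and iv) I would run the identical argument with Proposition \ref{Prop:BiyecciondeTSPPsbajohomotecia} in place of Proposition \ref{Prop:BiyecciondeSPPsbajohomotecia}, using that the defining condition \eqref{TangSpace} for transversality of a special parabolic point is checked pointwise and is preserved by $\widetilde{\varphi}^{(\alpha,\beta,r)}_{s}$ for $s\neq 0$.

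The only step requiring care — and the main (modest) obstacle — is the uniformity in $t$ of the non-singularity of $V(H_{f_t})$: Proposition \ref{Prop:BiyecciondeSPPsbajohomotecia} needs this both on $\Omega$ and on the $t$-dependent region $\rho_t^{(-\alpha,-\beta)}(\Omega)$, and a priori one might fear that as $t\to 0$ these target regions wander off. The fix, already indicated above, is to absorb all of them into a single fixed bounded set $\Omega'$ before invoking Corollary \ref{HFt non-sing}; because $f$ (not merely each $f_t$) has globally non-singular Hessian curve, this costs nothing. One should also note, for the $r\neq 0$ reductions inside Propositions \ref{Prop:BiyecciondeSPPsbajohomotecia} and \ref{Prop:BiyecciondeTSPPsbajohomotecia}, that those reductions used Lemma \ref{Lem:SPPsigualesbajosr} and Corollary \ref{Lem:SPPsigualesbajosrt} with $s=t\neq 0$, which is exactly our situation, so no new argument is needed there. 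Everything else is the bookkeeping identity $h=\varrho\circ\widetilde h$, $\varphi=\varrho\circ\widetilde\varphi$.
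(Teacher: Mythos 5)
Your proposal is essentially the paper's own proof: the authors likewise invoke Corollary \ref{HFt non-sing} to get non-singularity of $V(H_{f_t})$ inside $\Omega$ for small $t$, and then apply Propositions \ref{Prop:BiyecciondeSPPsbajohomotecia} and \ref{Prop:BiyecciondeTSPPsbajohomotecia} with $s=t$ to obtain i)--ii) and iii)--iv) respectively. One small correction to your ``uniformity'' digression: the union $\bigcup_{0<|t|<1}\rho_t^{(-\alpha,-\beta)}(\Omega)$ is in general \emph{not} bounded (for $\alpha>0$ the map $x\mapsto t^{-\alpha}x$ blows up as $t\to 0$), so the set $\Omega'$ you describe need not exist; fortunately that step is superfluous, since the hypothesis of Proposition \ref{Prop:BiyecciondeSPPsbajohomotecia} concerns only the source region $\Omega$, and the conclusion about the target region $\rho_t^{(-\alpha,-\beta)}(\Omega)$ is part of what that proposition delivers rather than something you must verify separately.
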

\begin{proof} Since $H_{f}$ is non-singular in the closure of $\Omega$, by Corollary \ref{HFt non-sing}, there exists $\delta>0$ such that for $0<|t|<\delta$, $H_{f_{t}}$ has no singularities inside $\Omega$. By Proposition \ref{Prop:BiyecciondeSPPsbajohomotecia}, making $s=t$, we have i) and ii). And, by Proposition \ref{Prop:BiyecciondeTSPPsbajohomotecia}, we have iii) and iv).
% the Hessian curve of $f_t$ is non-singular for $t$ small enough. By transformation (\ref{ecut2}) and Proposition \ref{Prop:BiyecciondeSPPsbajohomotecia} we have i) and ii). Therefore, by Proposition \ref{Prop:BiyecciondeTSPPsbajohomotecia}, we have iii) and iv) 
\end{proof}

\section{Viro's Theorem for transversal special parabolic points}\label{sec:Viro'sthmforspps}

In this section we will adapt Viro's patchworking technique to the study of transversal special parabolic points on the graphs of polynomials. 

\begin{prop}\label{prop:lambdaaZ}
	Given a convex polyhedral subdivision $\tau$ of $\Delta\subset \R^2$ induced by $\lambda :\Delta \to \R_{\geq 0}$, there exists $d\in \Z_{>0}$ %an integer multiple $d\cdot\lambda$ of the function $\lambda$ 	such that the convex subdivision that it defines, satisfies
		such that $d\cdot \lambda(\Delta\cap \Z^2)\subset \Z$.
\end{prop}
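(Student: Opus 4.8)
The plan is to exploit the fact that $\lambda$ takes integer values on the vertices of the subdivision $\tau$ and is affine-linear on each polyhedron of $\tau$. First I would recall from the definition of a convex polyhedral subdivision that $\lambda$ is integer-valued on the (finitely many) vertices $v_1,\dots,v_N$ of $\tau$, all of which lie in $\Z^2$. For a fixed top-dimensional polyhedron $E\in\tau$, the restriction $\lambda|_E$ is the affine function agreeing with $\lambda$ on the vertices of $E$; writing $\lambda|_E(i,j)=a_E\, i+b_E\, j+c_E$, the coefficients $a_E,b_E,c_E$ are rational, since they are the unique solution of a linear system with integer coefficients (the vertex coordinates) and integer right-hand side (the vertex values of $\lambda$). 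Hence $a_E,b_E,c_E\in\Q$ for every $E$.

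Next I would choose $d$ to be a common denominator. Since $\tau$ has finitely many polyhedra and each contributes finitely many rational numbers $a_E,b_E,c_E$, there exists $d\in\Z_{>0}$ such that $d\,a_E, d\,b_E, d\,c_E\in\Z$ for all $E\in\tau$. Then for any lattice point $(i,j)\in\Delta\cap\Z^2$, pick a polyhedron $E\in\tau$ containing $(i,j)$; we get $d\cdot\lambda(i,j)=d\,\lambda|_E(i,j)=(d\,a_E)\,i+(d\,b_E)\,j+d\,c_E\in\Z$, because $i,j\in\Z$ and all three coefficients are now integers. This gives $d\cdot\lambda(\Delta\cap\Z^2)\subset\Z$ as claimed.

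The only point requiring a little care — and what I'd flag as the main obstacle, though it is minor — is justifying that the affine coefficients $a_E,b_E,c_E$ are well-defined and rational. For a full-dimensional $E$ this is immediate: three affinely independent vertices in $\Z^2$ with prescribed integer $\lambda$-values determine $(a_E,b_E,c_E)$ uniquely via Cramer's rule, so the coefficients are quotients of integer determinants, hence rational. If some polyhedron of $\tau$ is lower-dimensional (an edge or vertex), one can either extend $\lambda|_E$ from the affine function on an adjacent full-dimensional polyhedron, or simply note that on an edge with two integer endpoints and integer values the unique affine extension again has rational coefficients; in the degenerate vertex case there is nothing to prove. I would then remark that consistency across shared faces is automatic because $\lambda$ is a single well-defined function on $\Delta$, so the choice of $E$ containing $(i,j)$ does not matter.

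Once these observations are in place, the proof is essentially a one-line argument: rationality of finitely many affine-coefficient triples, clear denominators, and evaluate at lattice points. I would keep the write-up short, stating the affine expression $\lambda|_E(i,j)=a_E i+b_E j+c_E$, invoking Cramer's rule for rationality, and defining $d$ as (a multiple of) the least common multiple of all the denominators appearing.
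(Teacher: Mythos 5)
Your proof is correct and follows essentially the same route as the paper's: both deduce that $\lambda$ takes rational values at lattice points from its affine-linearity on each cell together with the integrality of the vertex data, and then clear denominators using finiteness. The only cosmetic difference is that you extract rationality from the explicit affine coefficients via Cramer's rule, whereas the paper writes each lattice point as a rational (affine) combination of the vertices of its cell and applies linearity of $\lambda$ there.
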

\begin{proof}
	Take $\alpha\in\Delta\cap \Z^2$. If $\alpha$ is a $0$-dimensional polyhedron in $\tau$, then $\lambda(\alpha)\in \Z$. Otherwise, let $F\in\tau$ be the polyhedron with vertices $V_1,\dots,V_s$ such that $\alpha\in F$. There exist rational numbers $\alpha_i\in \Q$ such that $\displaystyle \alpha=\sum_{i=1}^s\alpha_i V_i$ and thus $\displaystyle \lambda(\alpha)=\sum_{i=1}^s\alpha_i \lambda(V_i)\in \Q$. The result follows from the fact that $\lambda(\Delta\cap \Z^2)$ is a finite set.
\end{proof}	
	
\begin{lemma}%\label{Flatteningaface}
	Let $\Delta \subset \R^2$ be a polyhedron and let $\tau$ be the polyhedral subdivision of $\Delta$ induced by the convex function $\lambda :\Delta \to \R_{\geq 0}$. If $\lambda(\Delta\cap \Z^2) \subset \Z$, then for $\widetilde{E}\in T(\lambda)$ the only vector of the form $(\alpha, \beta, 1)\in \R^3$ that is orthogonal to $\widetilde{E}$ has integer coordinates.
\end{lemma}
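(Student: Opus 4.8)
The plan is to analyze the geometry of a face $\widetilde{E}\in T(\lambda)$ directly. A $2$-dimensional face $\widetilde{E}$ of the graph $\Gamma_\lambda$ is the convex hull of points of the form $(V_i,\lambda(V_i))$ where the $V_i$ are the vertices of the corresponding polyhedron $E=\pi(\widetilde{E})\in\tau$. Since $\lambda$ is linear on $E$, there exist $a,b,c\in\R$ with $\lambda(x,y)=ax+by+c$ for all $(x,y)\in E$; geometrically this says that the affine plane containing $\widetilde{E}$ is the graph $z=ax+by+c$. A normal vector to this plane is $(a,b,-1)$, or equivalently $(-a,-b,1)$, and this is the unique vector of the form $(\alpha,\beta,1)$ orthogonal to $\widetilde{E}$. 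So the claim reduces to showing $a,b\in\Z$.

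First I would fix a polyhedron $E\in\tau$ with vertices $V_1,\dots,V_s\in\Delta\cap\Z^2$ (these have integer coordinates since the vertices of the subdivision lie in $\Z^2$ by hypothesis on $\Delta$ and $\tau$). Since $E$ is $2$-dimensional, I can choose three affinely independent vertices among them, say $V_1=(p_1,q_1)$, $V_2=(p_2,q_2)$, $V_3=(p_3,q_3)$. The coefficients $a,b,c$ are then determined by the linear system
\begin{equation*}
\begin{pmatrix} p_1 & q_1 & 1\\ p_2 & q_2 & 1\\ p_3 & q_3 & 1\end{pmatrix}\begin{pmatrix} a\\ b\\ c\end{pmatrix}=\begin{pmatrix}\lambda(V_1)\\ \lambda(V_2)\\ \lambda(V_3)\end{pmatrix},
\end{equation*}
whose coefficient matrix has integer entries and nonzero determinant (affine independence), and whose right-hand side has integer entries by the hypothesis $\lambda(\Delta\cap\Z^2)\subset\Z$. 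By Cramer's rule, $a=\det M_a/\det M$ and $b=\det M_b/\det M$ where $M_a,M_b$ are integer matrices; hence $a,b\in\Q$. This already gives rationality, but not integrality, so something more is needed.

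The main obstacle is upgrading rationality to integrality: a priori $\det M$ could be larger than $1$. The key additional input is that the subdivision $\tau$ is \emph{convex}, induced by $\lambda$, and in fact I believe the cleaner route is to invoke primitivity of the edge vectors of $\Delta$, or rather to use that $\lambda$ takes integer values at \emph{every} lattice point of $E$, not just the vertices. Concretely: since $E$ is a two-dimensional lattice polytope, for any lattice point $w\in E\cap\Z^2$ we have $\lambda(w)=aw_1+bw_2+c\in\Z$. Subtracting the relation at $V_1$, we get $a(w_1-p_1)+b(w_2-q_2)\in\Z$ for all $w\in E\cap\Z^2$. If I can find lattice points $w,w'\in E$ with $w-V_1=(1,0)$ and $w'-V_1=(0,1)$ — i.e., the two standard basis vectors — then $a\in\Z$ and $b\in\Z$ immediately. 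In general such points may not lie in $E$, so the honest argument must instead use that the lattice-point differences $\{w-V_1: w\in E\cap\Z^2\}$ together with the constraint generate a sublattice on which $\alpha\mapsto\langle(a,b),\alpha\rangle$ is integer-valued; combined with the fact that $(a,b)$ already lies in $\frac{1}{\det M}\Z^2$ and that the relevant index divides $\det M$, one concludes $(a,b)\in\Z^2$. I would phrase this last step via the following clean lemma: if a linear functional on $\R^2$ takes integer values on all lattice points of a full-dimensional lattice polytope, then it takes integer values on all of $\Z^2$ — which holds because $\Z^2$ is generated by differences of lattice points contained in any such polytope (after translating, a fundamental domain's worth of lattice points suffices). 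Granting that, the proof is complete: $(a,b)\in\Z^2$, so the unique normal $(\alpha,\beta,1)=(-a,-b,1)$ to $\widetilde{E}$ has integer coordinates.
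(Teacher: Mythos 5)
Your argument is correct, but it is organized differently from the paper's, and the comparison is worth making explicit. The paper does not take arbitrary affinely independent vertices and then repair the denominator: it chooses from the start three lattice points of $E$ spanning an \emph{empty} lattice triangle (no other lattice points in it), observes that such a triangle is unimodular, so the coefficient matrix of the linear system for $(\alpha,\beta,-r)$ has determinant $\pm 1$, and reads off integrality directly from Cramer's rule. Your route — Cramer with arbitrary vertices, then the observation that the functional $(w_1,w_2)\mapsto aw_1+bw_2$ is integer-valued on all differences of lattice points of $E$, then the lemma that these differences generate $\Z^2$ — is valid, but note that your ``clean lemma'' is not free: its proof in the plane is precisely the fact the paper uses, namely that any full-dimensional lattice polygon can be triangulated into empty lattice triangles and that every empty lattice triangle has normalized area $1$ (Pick's theorem), hence its edge vectors form a basis of $\Z^2$. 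So the two proofs rest on the same key input; yours merely defers it to the last step. Two cautions if you write this up: first, the parenthetical justification ``a fundamental domain's worth of lattice points suffices'' is not an argument — replace it by the empty-triangle/Pick argument just described; second, the lemma is genuinely two-dimensional (it fails in $\R^3$, e.g.\ for Reeve tetrahedra, whose only lattice points are four vertices generating a proper sublattice), so the phrase ``I believe'' should be replaced by the 2D-specific proof rather than left as an appeal to generality. With that filled in, your proof is complete and equivalent in substance to the paper's.
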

\begin{proof}
	Let $\,(x_1,y_1,\lambda(x_1,y_1)),\, (x_2,y_2,\lambda(x_2,y_2)),\, (x_3,y_3,\lambda(x_3,y_3))\in \widetilde{E}\cap \Z^3$ be points with the property that there are no points with integer coordinates inside the triangle $\Delta(v_1, v_2, v_3)$ with vertices $v_1=(x_1,y_1),\,v_2= (x_2,y_2),\,v_3= (x_3,y_3)\in \Z^2$, that is, \vspace{-0.2cm}
	\begin{equation}\label{AreaTriangulo=1}
		1=\area (\Delta(v_1, v_2, v_3))=|(v_2-v_1) \cdot(v_3-v_1)|.
	\end{equation}
	A vector $(\alpha, \beta, 1)\in \R^3$ is orthogonal to $\widetilde{E}$ if $(\alpha,\beta,1)\cdot(x_k,y_k,\lambda(x_k,y_k))$ is constant for $k=1, 2, 3$. The system of equations in the real variables $\alpha,\beta$ and $r$,
	\begin{equation}\label{Ecsparaaplanarcara}
	\begin{pmatrix}
		1&0& 0\\
		0 & 1 & 0\\
		\alpha & \beta & 1
	\end{pmatrix} \begin{pmatrix}
	x_k\\
	y_k\\
	\lambda(x_k,y_k)
	\end{pmatrix}=\begin{pmatrix}
	x_k\\
	y_k\\
	r
	\end{pmatrix} \quad k=1,2,3,\end{equation}
	sending $\widetilde{E}$ to the horizontal plane $\{Z=r\}$ is equivalent to the system of three equations
	$\displaystyle
	\begin{pmatrix} 
	x_1& y_1& 1\\
	x_2 & y_2 & 1\\
	x_3 & y_3 & 1
	\end{pmatrix} \begin{pmatrix}
	\alpha\\
	\beta\\
	-r 
	\end{pmatrix}=\begin{pmatrix}
	-\lambda(x_1,y_1)\\
	-\lambda(x_2,y_2)\\
	-\lambda(x_3,y_3)
	\end{pmatrix}$. The determinant of the $3\times 3$ matrix involved, given by $(x_2y_3-y_2x_3)-(x_1y_3-x_3y_1)+(x_1y_2-x_2y_1)=(x_3-x_1,y_3-y_1)\cdot( -(y_2-y_1),x_2-x_1)=(v_3-v_1) \cdot (v_2-v_1)^{\perp}$, is equal to $\pm 1$ by \eqref{AreaTriangulo=1}, %equal to the inner product of the vectors $v_3-v_1=\left(x_3-x_1,y_3-y_1\right), (v_2-v_1)^{\perp}=\left(-(y_2-y_1), x_2-x_1\right)\in \Z^2$, 
	so the existence of integer solutions $\alpha, \beta, r\in \Z$ to the system \eqref{Ecsparaaplanarcara} is granted. 
\end{proof}	 

\begin{lemma}\label{Flatteningaface}
	Let $\Delta \subset \R^2$ be a polyhedron and let $\tau$ be the polyhedral subdivision of $\Delta$ induced by the convex function $\lambda :\Delta \to \R_{\geq 0}$. If $\lambda(\Delta\cap \Z^2) \subset \Z$ and $(\alpha, \beta, 1) \in \Z^3$ is orthogonal to $\widetilde{E}\in T(\lambda)$, then the linear transformation $l_{(\alpha,\beta)}:\R^3\to \R^3$ defined by the matrix $\begin{pmatrix}
		1&0& 0\\
		0 & 1 & 0\\
		\alpha & \beta & 1
		\end{pmatrix}\!$, satisfies $l_{(\alpha,\beta)}(\widetilde{E})\subset \{Z=r\}$ where $r:=\min\{(\alpha, \beta, 1)\cdot v;v\in T(\lambda)\}$ is an integer number.% and $\pi(v)=\pi(l_{(\alpha,\beta)}(v))$ for every $v\in E$.
\end{lemma}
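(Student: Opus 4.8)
The plan is to show two things: first, that $l_{(\alpha,\beta)}$ sends the face $\widetilde{E}$ into a single horizontal plane $\{Z=r\}$; and second, that the value $r$ is the one described, namely $r=\min\{(\alpha,\beta,1)\cdot v;\,v\in T(\lambda)\}$, and that it is an integer. The geometric content is essentially that $l_{(\alpha,\beta)}$ is a shear fixing the first two coordinates, and its effect on the third coordinate of a point $(x,y,z)$ is to replace $z$ by $z+\alpha x+\beta y=(\alpha,\beta,1)\cdot(x,y,z)$; so flatness of the image of $\widetilde{E}$ is exactly the statement that $(\alpha,\beta,1)$ is constant along $\widetilde E$, which is the orthogonality hypothesis.

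First I would record the explicit formula $l_{(\alpha,\beta)}(x,y,z)=(x,\,y,\,\alpha x+\beta y+z)$, so that the $Z$-coordinate of $l_{(\alpha,\beta)}(v)$ equals $(\alpha,\beta,1)\cdot v$ for every $v\in\R^3$. Next, by hypothesis $(\alpha,\beta,1)$ is orthogonal to $\widetilde E$ in the sense used in the previous lemma, i.e. $(\alpha,\beta,1)\cdot(x_k,y_k,\lambda(x_k,y_k))$ takes a common value $r$ on the three lattice points $(x_k,y_k,\lambda(x_k,y_k))$ that span $\widetilde E$; since $\widetilde E$ is the convex hull of these points (it is a $2$-dimensional face of the graph $\Gamma_\lambda$, hence a lattice polytope triangulable into unimodular triangles, one of which is $\Delta(v_1,v_2,v_3)$), and since $v\mapsto(\alpha,\beta,1)\cdot v$ is affine, it is constant $=r$ on all of $\widetilde E$. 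Hence $l_{(\alpha,\beta)}(\widetilde E)\subset\{Z=r\}$.

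For the identification of $r$: since $\lambda$ is the convex piecewise-linear function inducing $\tau$, the graph $\Gamma_\lambda$ lies above the supporting hyperplane determined on the face $\widetilde E$, which is precisely the affine functional $v\mapsto(\alpha,\beta,1)\cdot v$; convexity of $\lambda$ (equivalently, $\Gamma_\lambda=T(\lambda)$ being the lower boundary of the polytope) forces $(\alpha,\beta,1)\cdot v\geq r$ for every $v\in T(\lambda)$, with equality exactly on $\widetilde E$. Therefore $r=\min\{(\alpha,\beta,1)\cdot v;\,v\in T(\lambda)\}$. Integrality of $r$ is immediate: pick any vertex $v_k=(x_k,y_k)\in\Z^2$ of $\widetilde E$; then $r=(\alpha,\beta,1)\cdot(x_k,y_k,\lambda(x_k,y_k))=\alpha x_k+\beta y_k+\lambda(x_k,y_k)$, and all four quantities are integers — $\alpha,\beta\in\Z$ by hypothesis, $x_k,y_k\in\Z$, and $\lambda(x_k,y_k)\in\Z$ because $\lambda(\Delta\cap\Z^2)\subset\Z$.

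The main obstacle I anticipate is not any computation but making the convexity step fully rigorous — i.e. justifying cleanly that the inner product $(\alpha,\beta,1)\cdot v$ attains its \emph{minimum} over $T(\lambda)$ exactly on $\widetilde E$. This rests on the precise sense in which $(\alpha,\beta,1)$ defines the supporting functional of the polytope $\Gamma_\lambda$ along the face $\widetilde E$: one must invoke that $T(\lambda)$ is, by construction, the graph (hence the lower envelope) of the convex function $\lambda$, and that the linear pieces of $\lambda$ over the $2$-cells of $\tau$ are distinct affine functions whose graphs lie above one another away from shared faces — exactly the non-degeneracy built into the definition of ``$\lambda$ induces the convex polyhedral subdivision $\tau$.'' Once that is spelled out, everything else is the one-line shear computation and the integrality count above.
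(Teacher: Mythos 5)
Your proposal is correct and follows essentially the same route as the paper: the paper's proof also observes that $l_{(\alpha,\beta)}$ replaces the third coordinate by $(\alpha,\beta,1)\cdot v=\widetilde{\lambda}(i,j)=\lambda(i,j)+i\alpha+j\beta$, sends $\widetilde{E}$ into the plane $\{Z=r\}$, and uses convexity to place the remaining faces above it, giving $r$ as the minimum and an integer. Your version merely spells out the shear computation and the convexity/integrality steps in more detail.
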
	
\begin{proof}	 
	The linear transformation $l_{(\alpha,\beta)}$ sends the face $\widetilde{E}\in T(\lambda)$ to a face in $T(\widetilde{\lambda})$ contained in the horizontal plane $\{Z=r\}$, where $\widetilde{\lambda}(i,j)=\lambda(i,j)+i\alpha+ j\beta$; while the remaining 2-dimensional faces in $T(\lambda)$ are sent to faces in $T(\widetilde{\lambda})$ above this horizontal plane, thus $r=\min\{(\alpha, \beta, 1)\cdot v;v\in T(\lambda)\}\in\Z$.
\end{proof}

From now on, and without loss of generality, by Proposition \ref{prop:lambdaaZ}, all our convex polyhedral subdivisions will be induced by convex functions sending points with integer coordinates to integer values.

Let $\tau$ be a convex polyhedral subdivision of $\Delta\subset \R^{2}$ induced by $\lambda: \Delta \to \R_{\geq 0}$. Let $f\in \R[x,y]$ be a polynomial with support in $\Delta$. Given $(\alpha,\beta)\in \Z^{2}$ the mapping 
\begin{equation*}
	\widetilde{\lambda}: (i,j) \mapsto \lambda(i,j)+\alpha i+\beta j
\end{equation*}
is also a convex function inducing $\tau$. Let $f_{t}$ be the patchworking polynomial of $f$ induced by $\lambda$ and let $\widetilde{f}_{t}$ be the patchworking polynomial induced by $\widetilde{\lambda}$, then 
\begin{equation}\label{eq:111}
\widetilde{f}_{t}=h_{(\alpha,\beta,0)}(f_{t})=t^{r}f_{t}(t^{\alpha}x,t^{\beta}y)
\end{equation} for some integer value $r\in \Z$. 

\begin{prop}\label{ortogonal}
	Let $\tau$ be a convex polyhedral subdivision of $\Delta \subset \R^2$ induced by $\lambda :\Delta \to \R_{\geq 0}$. Let $f \in\R[x,y]$ be a polynomial with support in $\Delta$ and let $f_t$ be the patchworking polynomial of $f$ induced by $\lambda$. If the vector $(\alpha,\beta,1) \in\Z^3$ is orthogonal to $\widetilde{E}\in T(\lambda)$, then the patchworking polynomial $\widetilde{f_t}$ induced by $\widetilde{\lambda}(i,j)=\lambda(i,j)+i\alpha+j\beta$ satisfies $\displaystyle h_{(\alpha,\beta,0)}\left(f_t|_{\widetilde{E}}\right)=\widetilde{f}_t^{[r]}$ for some constant $r\in\Z$.
\end{prop}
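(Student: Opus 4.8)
The plan is to combine Lemma~\ref{Flatteningaface} with the description of the patchworking polynomial in equation~\eqref{eq:111}. First I would recall that, by hypothesis, $(\alpha,\beta,1)\in\Z^3$ is orthogonal to the face $\widetilde{E}\in T(\lambda)$; by Lemma~\ref{Flatteningaface} the linear map $l_{(\alpha,\beta)}$ with matrix $\begin{pmatrix}1&0&0\\0&1&0\\\alpha&\beta&1\end{pmatrix}$ sends $\widetilde{E}$ into the horizontal plane $\{Z=r\}$ with $r=\min\{(\alpha,\beta,1)\cdot v; v\in T(\lambda)\}\in\Z$, while every other $2$-dimensional face of $T(\lambda)$ is sent strictly above this plane. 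In terms of the modified weight function $\widetilde{\lambda}(i,j)=\lambda(i,j)+\alpha i+\beta j$, this says precisely that $\widetilde{\lambda}$ attains its value $r$ on $\pi(\widetilde{E})$ and takes values $>r$ on every lattice point of $\Delta$ not lying on $\pi(\widetilde{E})$; in other words, $\{(i,j)\in\Delta; \widetilde{\lambda}(i,j)=r\}=\pi(\widetilde{E})\cap\Z^2$.

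Next I would unwind the two relevant definitions. On the one hand, by \eqref{eq:111}, $\widetilde{f}_t=h_{(\alpha,\beta,0)}(f_t)=t^{r}f_t(t^\alpha x,t^\beta y)$ is the patchworking polynomial of $f$ induced by $\widetilde{\lambda}$; on the other hand, by the definition of $f_t^{[r]}$, the ``slice at level $r$'' of $\widetilde{f}_t$ is
\[
\widetilde{f}_t^{[r]}(x,y)=t^{r}\!\!\!\!\sum_{\{(i,j)\in\Delta;\widetilde{\lambda}(i,j)=r\}}\!\!\!\!\! a_{i,j}x^i y^j .
\]
By the identification of the index set above, this sum runs exactly over $\pi(\widetilde{E})\cap\Supp(f)$, so $\widetilde{f}_t^{[r]}=t^{r}f|_{\pi(\widetilde{E})}$. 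Finally I would compute $h_{(\alpha,\beta,0)}(f_t|_{\widetilde{E}})$ directly: since $f_t|_{\widetilde{E}}=\sum_{(i,j,\lambda(i,j))\in\widetilde{E}}a_{i,j}t^{\lambda(i,j)}x^i y^j$, applying $h_{(\alpha,\beta,0)}$, i.e.\ substituting $x\mapsto t^\alpha x$, $y\mapsto t^\beta y$ and multiplying by $t^{0}$ (the shift constant being absorbed, as in~\eqref{eq:111}, into the overall $t^{r}$), turns each monomial's $t$-exponent into $\lambda(i,j)+\alpha i+\beta j=\widetilde{\lambda}(i,j)$. On $\widetilde{E}$ this common exponent equals $r$ by orthogonality of $(\alpha,\beta,1)$, so $h_{(\alpha,\beta,0)}(f_t|_{\widetilde{E}})=t^{r}\sum_{(i,j)\in\pi(\widetilde{E})\cap\Supp(f)}a_{i,j}x^i y^j=\widetilde{f}_t^{[r]}$, which is the claim.

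The one point that needs care --- and which I expect to be the main obstacle --- is bookkeeping the various additive shifts in the exponent of $t$: the normalisation $r$ coming from Lemma~\ref{Flatteningaface}, the shift constant implicit in writing $h_{(\alpha,\beta,0)}(f_t)=t^{r}f_t(t^\alpha x,t^\beta y)$ in~\eqref{eq:111}, and the minimum-value constant in the definition of $f_t^{[r]}$ all have to be shown to be the \emph{same} integer $r$. This is exactly what the orthogonality hypothesis $(\alpha,\beta,1)\perp\widetilde{E}$ guarantees, via Lemma~\ref{Flatteningaface}: the face $\widetilde{E}$ becomes horizontal at integer height $r$, so ``restrict to $\widetilde{E}$ then quasihomothety'' and ``quasihomothety then restrict to level $r$'' produce identical $t$-powers on identical monomial supports. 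Once the indices are matched the identity of polynomials is immediate.
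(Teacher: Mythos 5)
Your proof is correct and follows essentially the same route as the paper, which simply cites equation \eqref{eq:111} and Lemma \ref{Flatteningaface}; you have merely spelled out the bookkeeping that the authors leave implicit, namely that orthogonality of $(\alpha,\beta,1)$ to $\widetilde{E}$ makes $\widetilde{\lambda}$ constant equal to $r$ on $\pi(\widetilde{E})$ and strictly larger elsewhere, so both sides reduce to $t^{r}f|_{\pi(\widetilde{E})}$.
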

	\begin{proof} This result is direct consequence of (\ref{eq:111}) and Lemma \ref{Flatteningaface}.
%		Let $\displaystyle f(x,y)=\sum_{(i,j)\in \Delta}a_{i,j}x^{i}y^{j}\in \R[x,y]$ be. By Lemma \ref{Flatteningaface}, there exist integer values $\alpha, \beta, r \in \Z$ such that
%		$\displaystyle h_{(\alpha,\beta,0)}(f_t)(x,y)=f_t(t^{\alpha}x,t^{\beta}y)%=\sum_{(i,j)\in \Delta}a_{i,j}t^{\lambda(i,j)+i\alpha+j\beta}x^{i}y^{j}
%		=\sum_{(i,j)\in \Delta}a_{i,j}t^{\widetilde{\lambda}(i,j)}x^{i}y^{j}$, 
%		with $\widetilde{\lambda}(i,j)=\lambda(i,j)+i\alpha+j\beta$ equal to the constant $r=\min \{(\alpha, \beta, 1)\cdot \upsilon; \upsilon \in T(\lambda)\} \in \Z$ along %$\widetilde{E}$. In particular, $h_{(\alpha,\beta,0)}(f_t|_{\widetilde{E}})=\widetilde{f}_t|_{\{Z=r\}}$.
\end{proof}
	
	\begin{thm}\label{Thm:Flatteningafacetozero}
		Let $\Delta \subset \R^2$ be a polyhedron with vertices in $\Z^2$ and let $\tau$ be a convex polyhedral subdivision of $\Delta$ induced by $\lambda :\Delta \to \R_{\geq 0}$. Let $f \in\R[x,y]$ be a polynomial with support in $\Delta$ and let $f_t$ be the patchworking polynomial of $f$ induced by $\lambda$. If the vector $(\alpha,\beta,1)\in\Z^3$ is orthogonal to $\widetilde{E}\in T(\lambda)$ and $\gamma=-\min \{(\alpha, \beta, 1)\cdot \upsilon; \upsilon \in T(\lambda)\}$ %there exist $\alpha,\beta,\gamma \in \Z$ such that 
		then $\displaystyle h_{(\alpha,\beta,\gamma)}(f_t)$	is a perturbation of $f|_{\pi(\widetilde{E})}$.
	\end{thm}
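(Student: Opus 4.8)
The plan is to unwind the definitions so that everything reduces to the two results already established about the transformation $h_{(\alpha,\beta,r)}$, namely equation \eqref{eq:111} and Proposition \ref{ortogonal}. First I would recall that, by hypothesis, $\widetilde{\lambda}(i,j) := \lambda(i,j) + i\alpha + j\beta$ is again a convex function inducing $\tau$, and that the patchworking polynomial $\widetilde{f}_t$ of $f$ induced by $\widetilde{\lambda}$ satisfies $\widetilde{f}_t = h_{(\alpha,\beta,0)}(f_t) = t^{r_0} f_t(t^\alpha x, t^\beta y)$ for the appropriate integer exponent. Comparing the definition of $h_{(\alpha,\beta,\gamma)}$ with that of $h_{(\alpha,\beta,0)}$ shows $h_{(\alpha,\beta,\gamma)}(f_t) = t^{\gamma - r_0}\,\widetilde{f}_t$ up to the bookkeeping of the shift; the point of choosing $\gamma = -\min\{(\alpha,\beta,1)\cdot v; v\in T(\lambda)\}$ is precisely, via Lemma \ref{Flatteningaface}, that $\gamma$ equals minus the minimal value $r$ appearing there, so that the lowest-degree-in-$t$ part of $h_{(\alpha,\beta,\gamma)}(f_t)$ sits at $t^0$.

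The key step is then to identify this $t^0$-coefficient. By Proposition \ref{ortogonal}, $h_{(\alpha,\beta,0)}(f_t|_{\widetilde{E}}) = \widetilde{f}_t^{[r]}$ with $r$ the minimal $\lambda$-height on $\widetilde E$ after the shift; since $(\alpha,\beta,1)$ is orthogonal to $\widetilde E$, the face $\widetilde E$ becomes the bottom face of $\Gamma_{\widetilde\lambda}$, i.e. $\widetilde\lambda$ is constant equal to that minimum on $\pi(\widetilde E)$. Hence, writing $f_t = \sum a_{ij} t^{\lambda(i,j)} x^i y^j$ and applying the substitution $(x,y)\mapsto(t^\alpha x, t^\beta y)$ together with the factor $t^{\gamma}$, each monomial acquires the total $t$-exponent $\lambda(i,j) + \alpha i + \beta j + \gamma = \widetilde\lambda(i,j) + \gamma \geq 0$, with equality exactly for $(i,j)\in \pi(\widetilde E)$. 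Therefore $h_{(\alpha,\beta,\gamma)}(f_t) = \sum_{(i,j)\in\pi(\widetilde E)} a_{ij} x^i y^j \;+\; t\cdot g_t(x,y)$ for some $g_t\in\R[t][x,y]$, and the constant term is $f|_{\pi(\widetilde E)}$.

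Finally I would invoke the definition of a perturbation given in Section \ref{sec:Deftheory}: a family $F_t = \phi + t\,g_t$ with $g_t\in\R[t][x,y]$ is a perturbation of $\phi$. The computation above exhibits $h_{(\alpha,\beta,\gamma)}(f_t)$ in exactly this form with $\phi = f|_{\pi(\widetilde E)}$, which is the assertion. The one subtlety — and the place where the hypotheses $\Delta$ has vertices in $\Z^2$, $\lambda$ integer-valued on $\Z^2$, and $(\alpha,\beta,1)\in\Z^3$ all get used — is making sure every exponent of $t$ that appears is a nonnegative integer, so that $g_t$ genuinely lies in $\R[t][x,y]$ rather than in a ring with fractional powers; this is guaranteed by Proposition \ref{prop:lambdaaZ} (already adopted as a standing assumption) together with Lemma \ref{Flatteningaface}, which tells us $\gamma\in\Z$ and $\widetilde\lambda(i,j)+\gamma\in\Z_{\geq 0}$. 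I expect this integrality/nonnegativity verification to be the main (though routine) obstacle; the algebraic identity itself is essentially \eqref{eq:111} read off degree by degree in $t$.
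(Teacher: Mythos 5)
Your proposal is correct and follows essentially the same route as the paper's proof: both identify $h_{(\alpha,\beta,\gamma)}(f_t)$ as the patchworking polynomial induced by the shifted convex function $\widehat{\lambda}(i,j)=\lambda(i,j)+i\alpha+j\beta-r$, whose zero level set is exactly $E=\pi(\widetilde{E})$ by Lemma \ref{Flatteningaface}, and then read off the $t^{0}$ part as $f|_{E}$ with the remainder divisible by $t$. Your monomial-by-monomial bookkeeping of the exponents $\widetilde{\lambda}(i,j)+\gamma$ is just the paper's decomposition into level sets $\widehat{\lambda}^{-1}(r_l)$, and your integrality check via Proposition \ref{prop:lambdaaZ} and Lemma \ref{Flatteningaface} matches the paper's standing assumption.
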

	\begin{proof}
		Set $\displaystyle f(x,y)=\sum_{(i,j)\in \Delta}a_{i,j}x^{i}y^{j}\in \R[x,y]$. By Proposition \ref{ortogonal}, the patchworking polynomial $\widetilde{f_t}$ induced by $\widetilde{\lambda}(i,j)=\lambda(i,j)+i\alpha+j\beta$ satisfies $h_{(\alpha,\beta,0)}(f_t|_{\widetilde{E}})=\widetilde{f}_t^{[r]}$ for $r=\min \{(\alpha, \beta, 1)\cdot \upsilon; \upsilon \in T(\lambda)\}\in \Z$.
		%The polynomial \vspace{-0.2cm}\begin{equation*}
		%	\widetilde{f}_t(x,y):=f_t(t^{\alpha}x,t^{\beta}y)=\sum_{(i,j)\in \Delta}a_{i,j}t^{\widetilde{\lambda}(i,j)}x^{i}y^{j},
		%	\vspace{-0.2cm}
		%	\end{equation*}defined by these solution values $\alpha,\beta,r\in \Z\$, of \eqref{Ecsparaaplanarcara} sends the point $(i,j,\lambda(i,j))\in T$ to the point $(i,j,\widetilde{\lambda}(i,j))$, where $\widetilde{\lambda}(i,j)=\lambda(i,j)+i\alpha+j\beta\geq r$ is a rational number. 
		The polynomial $\displaystyle h_{(\alpha,\beta,-r)}(f_t)(x,y)=t^{-r}\widetilde{f_t}(t^{\alpha}x,t^{\beta}y)=\sum_{(i,j)\in \Delta}a_{i,j}t^{\lambda(i,j)+i\alpha+j\beta-r}x^{i}y^{j}\in\R[t][x,y]$ is the patchworking polynomial $\widehat{f_t}$ induced by the convex function $\widehat{\lambda} :\Delta \to \R_{\geq 0}$, $\widehat{\lambda}(i,j):=\lambda(i,j)+i\alpha+j\beta-r$. In particular, $h_{(\alpha,\beta,-r)}(f_t|_{\widetilde{E}})=\widehat{f}_t^{\,[0]}$.	
		%The linear transformation $v\to l_{(\alpha,\beta)}v-(0 0 r)^t$ sends the point $(i,j,\lambda(i,j))\in T(\lambda)$ to the point $(i,j,\widehat{\lambda}(i,j))$ in the polytope $T(\widehat{\lambda})\subset \R^3$ with polyhedral subdivision induced by $\widehat{\lambda}$. The face $E\in T(\lambda)$ is sent to a face in $T(\widehat{\lambda})$ contained in the horizontal plane $\{Z=0\}$, while the remaining 2-dimensional faces in $T(\lambda)$ are sent to faces in $T(\widehat{\lambda})$ above this horizontal plane. 
		Expressing $\widehat{f}_t$ as the finite sum of level sets $\widehat{\lambda}^{-1}(r_0),\dots\widehat{\lambda}^{-1}(r_m)$ of $\widehat{\lambda}$ corresponding to the values
		$0=r_{0}<r_{1}<\cdots <r_{m}$, %this way $\widehat{f}_t^{\; \Gamma_{1}}=f|_{E}\$, and thus, 
		we have that $\displaystyle \widehat{f}_t(x,y)= f|_{\pi(\widetilde{E})}+t \varphi_t,\; \mbox{with} \; \varphi_t(x,y)=\sum_{l=1}^m a_{i,j}t^{r_l-1}\widehat{f}_t^{\,[r_{l}]}(x,y)\in \R[t][x,y]$, as wanted.
	\end{proof}

Let $\Delta \subset \R^2$ be a polyhedron with vertices in the integer lattice $\Z^2$ and let $f\in\R[x,y]$ be a polynomial with support in $\Delta$. Let $\tau$ be the convex polyhedral subdivision of $\Delta$ induced by $\lambda :\Delta \to \R_{\geq 0}$. Denote by $TSPP(f,\lambda)^{*}$	the set of pairs
\begin{equation*}
 	TSPP(f,\tau)^{*}:=\bigcup_{E\in \tau}\{(E,p); p\in TSSP(f|_E)^{*} \}.
\end{equation*}

\begin{thm}\label{Viro'sThmfortspps}(Viro's Theorem for transversal special parabolic points)
	Let $\Delta \subset \R^2$ be a polyhedron with vertices in $\Z^2$ and let $\tau$ be the convex polyhedral subdivision of $\Delta$ induced by $\lambda :\Delta \to \R_{\geq 0}$. Let $f \in\R[x,y]$ be a polynomial with non-singular Hessian curve, and support in $\Delta$. If $f_t$ is the patchworking polynomial of $f$ induced by $\lambda$, then there exists $\delta>0$ such that for $0<|t|<\delta$, there is an inclusion
		\begin{equation*}
		\varphi_t:TSPP(f,\tau)^{*}\hookrightarrow TSPP(f_t)^{*}.
		\end{equation*}
\end{thm}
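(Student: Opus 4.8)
The plan is to reduce the global patchworking statement to the one-parameter perturbation results already proved. The key observation is that each pair $(E,p)\in TSPP(f,\tau)^{*}$ lives on a single face $\widetilde{E}\in T(\lambda)$, and by the flattening machinery of Theorem \ref{Thm:Flatteningafacetozero} we can find an integer vector $(\alpha_E,\beta_E,1)\in\Z^3$ orthogonal to $\widetilde{E}$ and an exponent $\gamma_E=-\min\{(\alpha_E,\beta_E,1)\cdot\upsilon;\upsilon\in T(\lambda)\}$ such that $h_{(\alpha_E,\beta_E,\gamma_E)}(f_t)$ is a perturbation of $f|_{\pi(\widetilde{E})}=f|_E$. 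Since $f|_E$ has non-singular Hessian curve (being the restriction of $f$, whose Hessian curve is non-singular), Corollary \ref{Cor:InclusionTSPPdefenTSPPdeFt} applies: for each $\varepsilon>0$ there is $\delta_E>0$ so that for $0<|t|<\delta_E$ there is an inclusion $\psi_t^{E}:TSPP(f|_E)^{*}\hookrightarrow TSPP(h_{(\alpha_E,\beta_E,\gamma_E)}(f_t))^{*}$, with images contained in prescribed small discs about the points $\pi(p)$, $p\in TSPP(f|_E)^{*}$.

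First I would, for each face $E$, transport these points back to $\Gamma_{f_t}$ using the quasihomothety correspondence. By Proposition \ref{Prop:BiyecciondeSPPsbajohomoteciadeft}(iv), for a suitable bounded region $\Omega_E$ and sufficiently small $t$, the map $\varphi^{(\alpha_E,\beta_E,\gamma_E)}_t$ gives a one-to-one correspondence between $TSPP(f_t)^{*}\cap\pi^{-1}(\Omega_E)$ and $TSPP(h_{(\alpha_E,\beta_E,\gamma_E)}(f_t))^{*}\cap\pi^{-1}(\rho_t^{(-\alpha_E,-\beta_E)}(\Omega_E))$. Composing the inverse of this bijection with $\psi^E_t$ defines, for each $(E,p)$, a point $\varphi_t(E,p)\in TSPP(f_t)^{*}$ whose projection lies inside $\rho_t^{(\alpha_E,\beta_E)}$ applied to a small disc about $\pi(\psi^E_t(p))$ — equivalently, inside $\rho_t^{(\alpha_E,\beta_E)}\bigl(\Tub_\varepsilon(\pi(TSPP(f|_E)^{*}))\bigr)$. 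Taking $\delta$ the minimum of the finitely many $\delta_E$ (over the finitely many faces $E\in\tau$, and using that $TSPP(f,\tau)^{*}$ is finite), this produces a well-defined map $\varphi_t:TSPP(f,\tau)^{*}\to TSPP(f_t)^{*}$ for $0<|t|<\delta$.

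The main obstacle is injectivity of $\varphi_t$: one must guarantee that points coming from two different pairs $(E,p)$ and $(E',p')$ land on genuinely distinct special parabolic points of $f_t$. If $E=E'$ this is immediate since $\psi^E_t$ is an inclusion and $\varphi^{(\alpha_E,\beta_E,\gamma_E)}_t$ is a bijection. If $E\neq E'$, the two vectors $(\alpha_E,\beta_E,1)$ and $(\alpha_{E'},\beta_{E'},1)$ are distinct (they are orthogonal to distinct $2$-faces of the polytope $\Gamma_\lambda$), so $(\alpha_E,\beta_E)\neq(\alpha_{E'},\beta_{E'})$ in $\Z^2$; here I would invoke Proposition \ref{Prop:phitkeepspointsawayV2}, applied to the finite point sets $A=\pi(TSPP(f|_E)^{*})$ and $B=\pi(TSPP(f|_{E'})^{*})$ and a common $\varepsilon$ small enough that all the relevant closed discs lie in $(\R^*)^2$: there is $\delta_{E,E'}>0$ so that for $0<|t|<\delta_{E,E'}$ the sets $\rho_t^{(\alpha_E,\beta_E)}(\Tub_\varepsilon(A))$ and $\rho_t^{(\alpha_{E'},\beta_{E'})}(\Tub_\varepsilon(B))$ are disjoint. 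Since $\pi(\varphi_t(E,p))$ lies in the first set and $\pi(\varphi_t(E',p'))$ in the second, the two images are distinct.

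Finally I would shrink $\delta$ once more to be below the minimum of all $\delta_{E,E'}$ over the finitely many unordered pairs of faces, so that all the disjointness conclusions hold simultaneously. With this choice of $\delta$, the map $\varphi_t$ is defined and injective for $0<|t|<\delta$, which is exactly the asserted inclusion $\varphi_t:TSPP(f,\tau)^{*}\hookrightarrow TSPP(f_t)^{*}$. The one point that needs a little care in the write-up is bookkeeping the $\varepsilon$'s: the $\varepsilon$ used to separate points on a single face in Corollary \ref{Cor:InclusionTSPPdefenTSPPdeFt} and the $\varepsilon$ used in Proposition \ref{Prop:phitkeepspointsawayV2} to separate images from different faces must be chosen compatibly and small enough that every disc $D(q,\varepsilon)$ involved sits inside $(\R^*)^2$; this is possible because all the sets $TSPP(f|_E)^{*}$ consist of points projecting into $(\R^*)^2$ by definition.
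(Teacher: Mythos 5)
Your proof follows essentially the same route as the paper's: flatten each face via $h_{(\alpha_E,\beta_E,\gamma_E)}$ (Theorem \ref{Thm:Flatteningafacetozero}), apply Corollary \ref{Cor:InclusionTSPPdefenTSPPdeFt} to obtain $\psi_t^E$, transport back with the quasihomothety bijection of Proposition \ref{Prop:BiyecciondeSPPsbajohomoteciadeft}, and separate the images of distinct faces with Proposition \ref{Prop:phitkeepspointsawayV2}, finishing by taking the minimum of finitely many $\delta$'s. The one blemish is your parenthetical claim that $f|_E$ has non-singular Hessian curve \emph{because} $f$ does --- that implication is false in general (the Hessian of a restriction is not the restriction of the Hessian), though the paper's proof silently makes the same assumption, and what is actually needed is only non-singularity of $V(H_{f|_E})$ at the points of $TSPP(f|_E)^{*}$, which holds by the definition of a transversal special parabolic point.
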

	%we have the inequality\vspace{-0.2cm}
	%\begin{equation*}
	%	\bigoplus_{E\in T(\lambda)%\, \atop \mbox{\tiny 2-dim}
	%	} \left|SPP(f|_{E})^{*}\right| \leq |SPP(f_t)^{*}|.
	%\end{equation*}%where the sum on the left-hand-side is taken over all 2-dimensional faces $E\in T(\lambda)$.
\begin{proof}\hspace{-5pt}Let $\textstyle (\alpha,\beta,1)\in\Z^3$ be an orthogonal vector to the face $\textstyle \widetilde{E}\in T(\lambda)$ and let $\textstyle r:=-\min \{(\alpha, \beta, 1)\cdot \upsilon; \upsilon \in T(\lambda)\}\in \Z$. Then, by Theorem \ref{Thm:Flatteningafacetozero}, the polynomial $h_{(\alpha,\beta,r)}(f_t)(x,y)$ defines a perturbation of $f|_{E}$, where $E=\pi(\widetilde{E})$ is in $\tau$. 
	
	For each face $E$ in $\tau$, set $\mathit{C}_E:=\pi(TSPP(f|_E)^*)$. Choose $\varepsilon>0$ small enough such that, for any $q,q^{\prime}\in \mathit{C}_E$, we have $D(q,\varepsilon),D(q^{\prime},\varepsilon)\subset (\R^{*})^{2}$ and $D(q,\varepsilon)\cap D(q^{\prime},\varepsilon)\neq \emptyset$. By Corollary \ref{Cor:InclusionTSPPdefenTSPPdeFt}, there exists $\delta_1>0$ such that for
	$0<|t|<\delta_1$ there is an inclusion
	\begin{equation*}
		\psi_{t}^{E}:TSPP(f|_E)^{*}\hookrightarrow TSPP(h_{(\alpha,\beta,r)}(f_t))^{*}
	\end{equation*}
	satisfying $\pi(\psi_t^{E}(p))\in D(\pi(p),\varepsilon)$. Thus, for each $q\in \mathit{C}_E$, $\pi\left(TSPP(h_{(\alpha,\beta,r)}(f_t)\right)^{*} \cap D(q,\varepsilon)\neq \emptyset$. By Proposition \ref{Prop:BiyecciondeSPPsbajohomoteciadeft}, there exists $\delta_2>0$ such that for $0< |t|<\delta_2$, 
	\begin{equation*}
		\left(\varphi^{(\alpha,\beta,r)}_t\right)^{-1}: TSPP(h_{(\alpha,\beta,r)}(f_t))^{*} \to TSPP(f_t)^{*} 
	\end{equation*}	
	is a bijection. Hence, for $0<|t|<\min\{\delta_1,\delta_2\}$, we have the inclusion
	\begin{equation*}
		\varphi_t^E: TSPP(f|_E)^{*}\stackrel{\psi_t^E}{\hookrightarrow}
		 TSPP(h_{(\alpha,\beta,r)}(f_t))^{*}\stackrel{\stackrel{\left(\varphi^{(\alpha,\beta,r)}_t\right)^{-1}}{\simeq}}{\longrightarrow} TSPP(f_t)^{*}.
	\end{equation*}
	
	By Proposition \ref{Prop:phitkeepspointsawayV2}, there exists $\delta_{3}>0$, such that, for $|t|<\delta_{3}$, $\mathrm{Im}(\varphi_{t}^E)\cap \mathrm{Im}(\varphi_{t}^{E^{'}})= \emptyset$ for $E\neq E^{'}$. Therefore, for $0<|t|<\delta:=\min\{\delta_1,\delta_2,\delta_3\}$, we obtain an inclusion 	\begin{equation*}
		\varphi_t:TSPP(f,\tau)^{*}\hookrightarrow TSPP(f_t)^{*}.
	\end{equation*}
\end{proof}

\begin{cor}\label{Cor:lemaViro}
	Let $\Delta \subset \R^2$ be a polyhedron with vertices in $\Z^2$ and let $\tau$ be the convex polyhedral subdivision of $\Delta$ induced by $\lambda :\Delta \to \R_{\geq 0}$. Let $f \in\R[x,y]$ be a polynomial with support in $\Delta$. If $f_t$ is the patchworking polynomial of $f$ induced by $\lambda$, then there exists $\delta>0$ such that for $0<|t|<\delta$, we have
\begin{equation*}
		|TSPP(f,\tau)^{*}|\leq |TSPP(f_t)^{*}|.
		\end{equation*}
\end{cor}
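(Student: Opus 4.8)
The inequality is the cardinality consequence of an inclusion $\varphi_t\colon TSPP(f,\tau)^{*}\hookrightarrow TSPP(f_t)^{*}$, and the plan is to build this inclusion exactly as in the proof of Theorem \ref{Viro'sThmfortspps}, the one novelty being that the hypothesis ``$f$ has non-singular Hessian curve'' is not actually needed for that argument. Concretely, the construction proceeds in four moves. First, for each cell $E\in\tau$ with lifted face $\widetilde E=\mu(E)\in T(\lambda)$, I would pick an integral vector $(\alpha,\beta,1)\in\Z^{3}$ orthogonal to $\widetilde E$ and set $r:=-\min\{(\alpha,\beta,1)\cdot v\,;\,v\in T(\lambda)\}$; by Theorem \ref{Thm:Flatteningafacetozero} the polynomial $h_{(\alpha,\beta,r)}(f_t)$ is then a perturbation of $f|_{E}$. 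Second, I would transport the (finite) set $TSPP(f|_{E})^{*}$ into $TSPP(h_{(\alpha,\beta,r)}(f_t))^{*}$ via Corollary \ref{Cor:InclusionTSPPdefenTSPPdeFt}, getting an inclusion $\psi_t^{E}$ that keeps each transversal special parabolic point inside a prescribed small disc. Third, I would pull the result back into $TSPP(f_t)^{*}$ through the bijection induced on transversal special parabolic points by the quasihomothety $\varphi_t^{(\alpha,\beta,r)}$ (Proposition \ref{Prop:BiyecciondeSPPsbajohomoteciadeft}). Fourth, I would use Proposition \ref{Prop:phitkeepspointsawayV2} to make the images $\mathrm{Im}(\varphi_t^{E})$ and $\mathrm{Im}(\varphi_t^{E'})$ disjoint for $E\neq E'$, and take $\delta>0$ to be the minimum of the finitely many thresholds produced; assembling the $\psi_t^{E}$'s then yields $\varphi_t$, whence $|TSPP(f,\tau)^{*}|\leq|TSPP(f_t)^{*}|$.

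The point that must be checked is that this argument survives the absence of the non-singular Hessian hypothesis on $f$. Here is why it does. The set $TSPP(f|_E)^{*}$ is finite, and for each of its points $p=(q,f|_E(q))$ the curve $V(H_{f|_E})$ --- exactly like $V(E_{1,f|_E})$ and $V(E_{2,f|_E})$ --- is non-singular at $q$, this being precisely the content of ``transversal'' in the definition of a transversal special parabolic point. Hence one can shrink the discs above until $V(H_{f|_E})$ is non-singular on each of them and they are pairwise disjoint and contained in $(\R^{*})^{2}$. Every appeal to a ``non-singular Hessian curve'' assumption in the four moves --- in Corollary \ref{HFt non-sing}, in Theorem \ref{Thm:TSPPdeFtclosetoTSPPdef} (through Corollary \ref{Cor:InclusionTSPPdefenTSPPdeFt}), and in Proposition \ref{Prop:BiyecciondeSPPsbajohomoteciadeft} --- is then made for $f|_E$ (respectively for $f_t$, after transport by the isomorphism $\varphi_t^{(\alpha,\beta,r)}$) in restriction to one of these discs, where the hypothesis holds; and the corresponding proofs in Sections \ref{sec:SPPsunderperturbation} and \ref{sec:Sppsofhomotheticpols} are local around the transversal special parabolic point being followed, so they go through verbatim in this restricted form.

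The bookkeeping of the previous paragraph is the main obstacle: one has to run through the chain step by step and confirm that no step uses the non-singular Hessian curve globally rather than merely in a neighbourhood of the transversal special parabolic points being tracked. It is worth recording why the obvious shortcut fails: if one perturbs $f$ to a polynomial $g$ with globally non-singular Hessian curve and applies Theorem \ref{Viro'sThmfortspps} to $g$, the inequalities do not chain, since a perturbation can only \emph{increase} the number of transversal special parabolic points, both of the pieces $g|_E$ (over $f|_E$) and of $g_t$ (over $f_t$); one ends up only with $|TSPP(f,\tau)^{*}|\leq|TSPP(g_t)^{*}|$ and $|TSPP(f_t)^{*}|\leq|TSPP(g_t)^{*}|$, which leaves $|TSPP(f,\tau)^{*}|$ and $|TSPP(f_t)^{*}|$ uncompared. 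So the localization argument appears to be genuinely necessary.
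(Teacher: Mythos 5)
Your derivation of the inequality from the inclusion is exactly the paper's route: the paper's entire proof of Corollary \ref{Cor:lemaViro} is the single sentence ``It follows from Theorem \ref{Viro'sThmfortspps}.'' Where you genuinely go beyond the paper is in noticing that the corollary, as stated, drops the hypothesis that $f$ has non-singular Hessian curve, and in supplying the localization argument needed to bridge that gap; the paper simply ignores the discrepancy. Your bridge is sound: by definition a transversal special parabolic point $(q,f|_E(q))$ already has $V(H_{f|_E})$ non-singular at $q$, hence on a small closed disc around $q$, and every invocation of the global hypothesis along the chain --- Corollary \ref{HFt non-sing}, Theorem \ref{Thm:TSPPdeFtclosetoTSPPdef} via Corollary \ref{Cor:InclusionTSPPdefenTSPPdeFt}, and Proposition \ref{Prop:BiyecciondeSPPsbajohomoteciadeft} --- is, in the corresponding proof, only ever exploited on such a bounded region or at the tracked point itself (Proposition \ref{PPE=H,E1,E2=0} is already stated pointwise). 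Your remark that the naive reduction (perturb $f$ to a polynomial with globally non-singular Hessian curve and chain inequalities) fails because both inequalities point toward the perturbed object is also correct and worth recording. Note, finally, that this is not an optional refinement: the paper applies the corollary in Section \ref{sec:Application} to $f=x^2y^2g$ without verifying that $V(H_f)$ is globally non-singular, so your localization is precisely the argument its one-line proof tacitly presupposes.
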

\begin{proof}
It follows from Theorem \ref{Viro'sThmfortspps}.
\end{proof}
	
\section{An Application of Corollary \ref{Cor:lemaViro}}\label{sec:Application}

In this section, we give an example to show how to use Corollary \ref{Cor:lemaViro} to build families of polynomials with a prescribed number of transversal especial parabolic points.

Let $\Delta\{a,b,c\} \subset \R^2$ denote the triangle with vertices $a$, $b$, $c\in \R^2$. We will denote by 
\begin{itemize}
	\item $\Delta_d:=\Delta\{(2,2), (d-2,2), (2,d-2)\}$,
	\item $\Delta^1_{(i,2)}:=\Delta\{(i, 2), (i+1,2), (i, 3)\}$,
	\item $\Delta^2_{(k,l)}:=\Delta\{(k+1, l), (k, l+1), (k, l+2)\}$, and 
	\item $\Delta^3_{(k,l)}:=\Delta\{(k+1, l), (k, l+2), (k+1, l+1)\}$
\end{itemize}for $d, i,k,l\in \Z_{\geq 2}$.

%\begin{prop}\label{LatriandelejemploesConvexa}
%	\rm{Let $\tau$ be the polyhedral subdivision of $\Delta_d$ into triangles of the form $\Delta^1_{(i,2)}$, $\Delta^2_{(k,l)}$ and $\Delta^3_{(k,l)}$ with $i\in\{2, 3, \dots, d-3\}$, $k\in\{2, 3, \dots, d-2\}$ and $l\in\{2, \dots, d-k-2\}$. Then $\tau$ is convex. 
		
%	}
%\end{prop}

\begin{figure}[H]
%\hspace{2.5cm}

\begin{picture}(200,160)

\put(0,0){\vector(0,1){150}}\put(0,0){\vector(1,0){150}}

{\color{magenta}
	\put(20,20){\line(0,1){120}}\put(20,20){\line(1,0){120}}
	\put(20,140){\line(1,-1){120}}
	\put(20,120){\line(1,-1){100}}
	\put(20,100){\line(1,-1){80}}
	\put(20,80){\line(1,-1){60}}
	\put(20,60){\line(1,-1){40}}
	\put(20,40){\line(1,-1){20}}
	\put(40,20){\line(0,1){100}}
	\put(60,20){\line(0,1){80}}
	\put(80,20){\line(0,1){60}}
	\put(100,20){\line(0,1){40}}
	\put(120,20){\line(0,1){20}}
	\put(20,140){\line(1,-2){60}}
	\put(20,120){\line(1,-2){40}}
	\put(20,100){\line(1,-2){40}}
	\put(20,80){\line(1,-2){20}}
	\put(20,60){\line(1,-2){20}}
	\put(40,120){\line(1,-2){40}}
	\put(60,100){\line(1,-2){40}}
	\put(80,80){\line(1,-2){20}}
	\put(100,60){\line(1,-2){20}}
}
\put(14,10){\makebox(0,0){$(2,2)$}}

{\color{red}
	\put(200,110){\line(0,1){20}}%%%
	\put(180,150){\line(1,-1){20}}
	\put(180,150){\line(1,-2){20}}}
\put(260,130){\makebox(100,0){\text{Triangle $\Delta^3_{(k,l)}$}}}
\put(230,130){\vector(-1,0){15}}

{\color{cyan}
	\put(210,86){\line(0,1){20}} %%%
	\put(210,86){\line(1,-1){20}}
	\put(210,106){\line(1,-2){20}}}
\put(260,86){\makebox(100,0){\text{Triangle $\Delta^2_{(k,l)}$}}}
\put(250,86){\vector(-2,0){15}}

{\color{blue}
	\put(190,30){\line(0,1){20}} %%%
	\put(190,50){\line(1,-1){20}}
	\put(190,30){\line(1,0){20}}}
\put(260,40){\makebox(100,0){\text{Triangle $\Delta^1_{(i,2)}$}}}
\put(233,40){\vector(-1,0){15}}

\end{picture} 

\end{figure}

%\begin{proof}
%	The triangular subdivision $\tau$ is obtained by dividing $\Delta_d$ into segments of lines $y=m_1x+j, y=m_2x+j$ and $x=j$ with slopes $m_1=-\infty, m_2=-1$ and $m_3=-2$, respectively, for $j=2, \dots,d$. Define the convex functions $\lambda_1, \lambda_2:\Delta_d \to \R_{\geq 0}$ by \vspace{-0.2cm}	 
%	\begin{small}
%		\begin{equation*}
%		\lambda_i(x,y)= j\quad \mbox{for} \quad
%		\begin{cases}
%		j\leq x<j+1 \quad \mbox{if} \quad i=1;\\
%		m_2x+j\leq y< m_2x+j+1 \quad \mbox{if} \quad i=2.
%		\end{cases}
%		\end{equation*} 
%	\end{small}\vspace{-0.6cm}	
	
%	The domains of linearity of $\lambda_1$ and $\lambda_2$ are the regions $j\leq x<j+1$ and $m_2x+j\leq y< m_2x+j+1$, respectively, where $j=2,\dots, d-1$. 
	
%	Define the convex function $\lambda_3:\Delta_d \to \R_{\geq 0}$ by \vspace{-0.2cm}	\begin{small}
%		\begin{equation*}
%		\lambda_3(x,y)=
%		\begin{cases}
%		0 \quad \mbox{if} \quad m_3x+j\leq y< m_3x+j+1 \quad \mbox{and} \quad y\leq m_2x+j+1 \; \mbox{with}\; j \;\,\mbox{odd}\\
%		j \quad \mbox{if} \quad m_3x+j\leq y< m_3x+j+1 \quad \mbox{and} \quad m_2x+j+1< y \; \mbox{with}\; j \;\,\mbox{odd}\\
%		0 \quad \mbox{if} \quad m_3x+j\leq y< m_3x+j+1 \quad \mbox{and} \quad j\leq x \; \, \mbox{with}\; j \;\,\mbox{even}\\
%		j \quad \mbox{if} \quad m_3x+j\leq y< m_3x+j+1 \quad \mbox{and} \quad x<j \; \,\mbox{with}\; j \;\,\mbox{even}.
%		\end{cases}
%		\end{equation*} 
%	\end{small}
%	\vspace{-0.4cm}	
	
The triangular subdivision $\tau$ of $\Delta_d$ obtained by dividing $\Delta_d$ into $\Delta^1_{(i,2)}, \Delta^2_{(k,l)}, \Delta^3_{(k,l)}$ with $i\in\{2, 3, \dots, d\}$, $k\in\{2, 3, \dots, d-2\}$ and $l\in\{2, \dots, d-k-2\}$, is convex. This subdivision is induced by a convex function $\lambda:\Delta_d \to \R_{\geq 0}$ that has been used in several works, for example in \cite{BertrandErwan} and \cite{Lucia}. Consider the polynomial types:
$P^1_{(i,2)}(x,y):=x^{i}y^{2}(1+x+y)$, $\, P^2_{(k,l)}(x,y):= x^{k}y^{l}(x+y+y^2)$ and %\; \mbox{and}\;
$P^3_{(k,l)}(x,y):=x^{k}y^{l}(x+xy+y^2)$, whose support lies in  $\Delta^1_{(i,2)}, \Delta^2_{(k,l)}$ and $\Delta^3_{(k,l)}$, respectively.%graphs are generic surfaces for $i, j, k, l\geq 2$.

\begin{thm}\label{thm:MainBound}
	\rm{Let $f=x^2y^2g(x,y)$ be the degree $d$ polynomial with support in the triangle $\Delta_d$, where $g \in \R[x,y]$ is a complete polynomial of degree $d-4$. Let $\tau$ be the polyhedral subdivision induced by $\lambda:\Delta_d \to \R_{\geq 0}$ as above, and let $f_t\in \R[x,y]$ be the patchworking polynomial of $f$ induced by $\lambda$. Then, for $d\leq 10,000$, there exists $\varepsilon>0$ such that \begin{equation*}
		|TSPP(f_t)^{*}|\geq(d-4)(2d-9)
		\end{equation*} for $0<|t|<\varepsilon$.}
\end{thm}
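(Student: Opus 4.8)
The plan is to apply Corollary \ref{Cor:lemaViro} to the polyhedral subdivision $\tau$ of $\Delta_d$, so that it suffices to count the transversal special parabolic points contributed by each piece of the subdivision and then sum. Concretely, by Corollary \ref{Cor:lemaViro} there exists $\varepsilon>0$ such that for $0<|t|<\varepsilon$ one has $|TSPP(f_t)^*|\geq |TSPP(f,\tau)^*| = \sum_{E\in\tau}|TSPP(f|_E)^*|$, where the restrictions $f|_E$ are, up to the coefficients supplied by $g$, the model polynomials $P^1_{(i,2)}$, $P^2_{(k,l)}$, $P^3_{(k,l)}$ (since $g$ is a complete polynomial of degree $d-4$, each face $E$ of $\tau$ supports one of these monomial-times-trinomial shapes with nonzero coefficients). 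So the real content is: (1) compute $|TSPP(P^j)^*|$ for each of the three model types, checking that each model has non-singular Hessian curve so that the hypotheses of the corollary's underlying theorem are met on each face; and (2) count how many faces of each type occur in $\tau$ as functions of $d$, and verify that the total is $(d-4)(2d-9)$.

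First I would carry out step (1): a direct computation of the Hessian $H$ and of $E_{1},E_{2}$ for $P^1_{(i,2)}=x^iy^2(1+x+y)$, and likewise for the two other model types. Because each model is a monomial times a fixed trinomial, one can use the quasihomothety/monomial-factor invariances already established (Proposition \ref{Prop:BiyecciondeSPPsbajohomotecia} and Lemma \ref{Lem:SPPsigualesbajosr}) to reduce the count for $P^j_{(k,l)}$ to that of a normalized representative independent of $(k,l)$ — the multiplication by $x^ky^l$ only rescales $H$, $E_1$, $E_2$ by powers of the coordinates and does not change the intersection pattern inside $(\R^*)^2$. So I only need the three numbers $n_1=|TSPP(x^2y^2(1+x+y))^*|$-type counts (appropriately for each shape), and I would verify each model has non-singular Hessian curve and that $V(E_1)$ and $V(E_2)$ meet transversally at each common point in $(\R^*)^2$, so that each such point genuinely lies in $TSPP^*$. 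This is the step I expect to be the main obstacle: it is an explicit elimination-theoretic computation, and one must be careful to (a) discard intersection points lying on $\{xy=0\}$, (b) confirm transversality rather than just set-theoretic intersection, and (c) make sure no common points of $V(E_1),V(E_2)$ lie on the Hessian being singular. One would likely organize this by resultants or by exhibiting the solutions explicitly; the example in Section \ref{sec:Viro'spatchworking} (the polynomial $x^2y^2(1+x+y+y^2)$) suggests the relevant computations are tractable.

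Then step (2) is pure bookkeeping on the combinatorics of $\tau$. From the definition of the subdivision, the type-$1$ triangles $\Delta^1_{(i,2)}$ are indexed by $i\in\{2,\dots,d\}$ — giving $d-1$ of them, though one must check which of these actually carry a nonzero restriction of $f$ (recall $f=x^2y^2g$ with $g$ complete of degree $d-4$, so the support of $f$ is $\Delta_d=\Delta\{(2,2),(d-2,2),(2,d-2)\}$; a type-$1$ triangle $\Delta^1_{(i,2)}=\Delta\{(i,2),(i+1,2),(i,3)\}$ lies in $\Delta_d$ precisely for $i$ up to $d-3$). Similarly the type-$2$ and type-$3$ triangles are indexed by $k\in\{2,\dots,d-2\}$, $l\in\{2,\dots,d-k-2\}$, giving $\sum_{k=2}^{d-2}(d-k-3)$ of each type (again subject to containment in $\Delta_d$). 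Multiplying each count by the corresponding model contribution $n_j$ and summing, one must obtain exactly $(d-4)(2d-9)$. Since $(d-4)(2d-9)=2d^2-17d+36$ is quadratic in $d$, the $\sum_k$ (a quadratic-in-$d$ sum) must come from the type-$2$/type-$3$ families, while the type-$1$ family contributes a linear-in-$d$ term; matching the leading coefficient $2$ pins down the per-triangle contributions. The bound $d\leq 10{,}000$ presumably enters only to guarantee that the $\delta$'s produced by Corollary \ref{Cor:lemaViro} (which shrink with the number of faces) can be taken uniformly, or is an artifact of the argument being stated for a concrete finite range; I would state the count for general $d$ and then invoke the corollary on the fixed subdivision to get the existence of $\varepsilon$.

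Finally I would assemble: non-singularity of $H_f$ is assumed in the statement, so Corollary \ref{Cor:lemaViro} applies verbatim; the inclusion $TSPP(f,\tau)^*\hookrightarrow TSPP(f_t)^*$ gives the inequality $|TSPP(f_t)^*|\geq|TSPP(f,\tau)^*|$; and the count from steps (1)--(2) shows the right-hand side equals $(d-4)(2d-9)$. Setting $\varepsilon:=\delta$ from the corollary finishes the proof.
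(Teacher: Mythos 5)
Your overall architecture matches the paper's proof: invoke Corollary \ref{Cor:lemaViro}, count the faces of each type in $\tau$, and multiply by the number of transversal special parabolic points contributed by each model polynomial. Your combinatorial bookkeeping in step (2) is correct — only $d-4$ type-$1$ triangles actually lie in $\Delta_d$, and there are $\frac{(d-5)(d-4)}{2}$ triangles of each of types $2$ and $3$ — and with per-face contributions $1,1,3$ this reproduces $(d-4)+4\cdot\frac{(d-5)(d-4)}{2}=(d-4)(2d-9)$.

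The gap is in step (1). You claim the count $|TSPP(P^j_{(k,l)})^*|$ can be reduced to a single normalized representative independent of $(k,l)$ because ``multiplication by $x^ky^l$ only rescales $H$, $E_1$, $E_2$ by powers of the coordinates,'' citing Proposition \ref{Prop:BiyecciondeSPPsbajohomotecia} and Lemma \ref{Lem:SPPsigualesbajosr}. Those results do not apply here: they concern $s^rf(s^\alpha x,s^\beta y)$, i.e.\ precomposition with a coordinate scaling and multiplication by a \emph{constant}, not multiplication of the function itself by a monomial. The Hessian is not multiplicative: for instance $(x^ky^lh)_{xx}=k(k-1)x^{k-2}y^lh+2kx^{k-1}y^lh_x+x^ky^lh_{xx}$, so $H_{x^ky^lh}$ is not a monomial times $H_h$, and $H$, $E_1$, $E_2$ for $P^j_{(k,l)}$ depend on $(k,l)$ in an essentially nontrivial way; the intersection pattern in $({\R^{*}})^2$ could in principle change with $(k,l)$. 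This is exactly the difficulty the authors could not overcome: the paper verifies $|TSPP(P^1_{(i,2)})^*|=1$, $|TSPP(P^2_{(k,l)})^*|=1$ and $|TSPP(P^3_{(k,l)})^*|=3$ by a computer computation for all indices occurring when $d\le 10{,}000$, and explicitly remarks that no proof of $|TSPP(P^3_{(k,l)})^*|\ge 3$ for all $k,l\ge 2$ is known. In particular, your reading of the hypothesis $d\le 10{,}000$ (uniformity of the $\delta$'s) is not the actual reason it appears; it marks the range over which the finitely many per-face computations were carried out, and if your reduction were valid the theorem would hold for all $d$, contradicting the authors' own caveat. To close the gap you must either perform (or cite) the finite computation for each $(k,l)$ in range, or supply a genuinely new $(k,l)$-uniform argument.
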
 

\begin{proof}
	%The convex subdivision of $\Delta_d$ induced by $\lambda$ is given by the union of triangles $\Delta^1_{(i,2)}$, $\Delta^2_{(k,l)}$ and $\Delta^3_{(k,l)}$ where $i\in\{2, 3, \dots, d-3\}$, $k\in\{2, 3, \dots, d-2\}$ and $l\in\{2, \dots, d-k-2\}$. 
	The convex polytope $T(\lambda)$ is the union of $d-2$ triangular faces of type $\widetilde{\Delta}^1_{(i,2)}$ and $\frac{(d-4)^2-(d-4)}{2}\!\!=\!\frac{(d-5)(d-4)}{2}$ faces of type $\widetilde{\Delta}^2_{(k,l)}$ and $\widetilde{\Delta}^3_{(k,l)}$, respectively, %where $i\in\{2, 3, \dots, 10\}$, $k\in\{2, 3, \dots, 11\}$ and $l\in\{2, \dots, 11-k\}$. Assume that $\tau$ is induced by $\lambda :\Delta \to \R_{\geq 0}$, thus the convex polytope $T(\lambda)$ is then the union of $9$ triangular faces of type $\widetilde{\Delta}^1_{(i,2)}$ and $36$ faces of type $\widetilde{\Delta}^2_{(k,l)}$ and $\widetilde{\Delta}^3_{(k,l)}$, respectively; 
	whose proyections on the $xy$-plane are given by $\pi\left( \widetilde{\Delta}^1_{(i,2)} \right) = \Delta^1_{(i,2)}$, $\pi\left( \widetilde{\Delta}^2_{(k,l)}\right) = \Delta^2_{(k,l)}$ and $\pi\left( \widetilde{\Delta}^3_{(k,l)}\right) = \Delta^3_{(k,l)}$.

The restrictions of $f$ to the faces in $T(\lambda)$ are equal to	
\begin{equation*}
	f|_{\Delta^1_{(i,2)}} = P^1_{(i,2)}, \quad f|_{\Delta^2_{(k,l)}} = P^2_{(k, l)}\quad \mbox{and} \quad
f|_{\Delta^3_{(k,l)}} = P^3_{(k, l)}.
\end{equation*} 
Using computer software \textrm{Mathematica} \cite{Mathematica}, we can see that, given $d\leq 10000$, $|TSPP(P^1_{(i,2)})^{*}|=1$ for $i \geq 2$, $|TSPP(P^2_{(k,l)})^{*}|=1$ and $|TSPP(P^3_{(k,l)})^{*}|=3$ for $k\in\{2, 3, \dots, d-2\}$ and $l\in\{2, \dots, d-k-2\}$. %$2\leq k\leq 11$.
Corollary \ref{Cor:lemaViro} guarantees that the number of transversal  special parabolic points in the graph of $f_t\in \R[t][x,y]$ satisfies
\begin{equation*}
|TSPP(f_t)^{*}|\geq 1(d-4)+4\left(\frac{(d-5)(d-4)}{2}\right)=(d-4)(2d-9), %\geq 153,
\end{equation*}for sufficiently small values of $t\neq 0$. 
\end{proof}

 %The function $\lambda:=\lambda_1+\lambda_2+\lambda_3:\Delta_d \to \R_{\geq 0}$ is convex, and its domain of linearity is precisely the triangles in $\tau$, thus the polyhedral subdivision $\tau$ is convex. %$\Delta^1_{(i,j)}, \Delta^2_{(k,l)}, \Delta^3_{(k,l)}$ with $i\in\{2, 3, \dots, d\}$, $j=2$, $k\in\{2, 3, \dots, d-2\}$ and $l\in\{2, \dots, d-k-2\}$. 

%The result that we just stated is valid when we restrict the degree to $d\leq 6000$ and the reason for this restriction is that we have calculated, using computer software \textrm{Mathematica} \cite{Mathematica}, that the equality $|SPP(P^3_{(k,l)})^{*}|= 3$ holds for $2\leq k,l\leq 3,000$. 
 Although we haven't found a proof of the fact that the inequality $|TSPP(P^3_{(k,l)})^{*}|\geq 3$ holds for $k,l\in\Z_{\geq 2}$, we firmly believe that the bound we give in the statement of Theorem \ref{thm:MainBound} works in general so the restriction on the degree can be removed from its hypotheses.

%We will use these polynomials to build families of polynomials $f_t\in \R[t][x,y]$ of degree $13$\, with at least $153$ transversal special parabolic points in their graphs, for sufficiently small values of $t.$

%\begin{ex}\label{Ex:MainBound}
 	%Let $\displaystyle f(x,y):=x^2y^2g(x,y)$ be a polynomial with support in the triangle $\Delta_{13}$, that is, $g(x,y)\in \R[x,y]$ is a polynomial of degree $9$. Then, there exists $\varepsilon>0$ and a convex function $\lambda:\Delta_{13} \to \R_{\geq 0}$, such that the patchworking polynomial of $f$ induced by $\lambda$ has at least $153$ transversal special parabolic points for $0<|t|<\varepsilon$.
% \end{ex}
 
%\begin{ex}\label{P2congeq1spp}
%	\rm{Let $P^2_{(k,l)}(x,y) := x^{k}y^{l}(x+y+y^2) \in \R[x,y]\$, with $\,k,l \in \Z_{\geq 2}$. Using computer software \textrm{Mathematica} \cite{Mathematica}, we can reduce the solutions $(x,y) \in {\R^{*}}^2$ of the system of equations $H_{P^2_{(k,l)}} = E_{1,{P^2_{(k,l)}}} = E_{2,{P^2_{(k,l)}}} = 0$ to the solutions of a degree five polynomial, for any $\,k,l\in\Z_{\geq 2}$. Thus, $|SPP(P^2_{(k,l)})^{*}|\geq 1$.		
%	}
%\end{ex}

\bibliographystyle{plain} %alpha, plain

%\bibliography{VirosBiblio} 
\end{document}